\newcolumntype{d}[1]{D{.}{.}{#1}}
\newtheorem{Thm}{Theorem}[section]
\newtheorem{Ex}{Example}[section]
\newtheorem{Remark}{Remark}[section]
\title{Non-oscillatory entropy stable DG schemes for hyperbolic conservation law}
\begin{document}

\pagenumbering{arabic}
\baselineskip=1.3pc

\vspace*{0.5in}

\begin{center}

{{\bf Non-oscillatory entropy stable DG schemes for hyperbolic conservation law}}

\end{center}

\vspace{.03in}

\centerline{
Yuchang Liu\footnote{School of Mathematical Sciences,
         University of Science and Technology of China,
         Hefei, Anhui 230026, P.R. China.  
         E-mail: lissandra@mail.ustc.edu.cn.},
Wei Guo\footnote{Department of Mathematics and Statistics, Texas Tech University, Lubbock, TX, 70409, USA. 
E-mail: weimath.guo@ttu.edu.
Research supported by NSF grant NSF-DMS-2111383, Air Force Office of Scientific Research FA9550-18-1-0257.
},
Yan Jiang\footnote{School of Mathematical Sciences,
         University of Science and Technology of China, Hefei,
         Anhui 230026, P.R. China.  
         E-mail: jiangy@ustc.edu.cn.
         Research supported by NSFC grant 12271499 and Cyrus Tang Foundation. },
and Mengping Zhang\footnote{School of Mathematical Sciences,
         University of Science and Technology of China, Hefei,
         Anhui 230026, P.R. China.  
         E-mail: mpzhang@ustc.edu.cn.}
}

\vspace{.1in}

\noindent
{\bf Abstract: }In this paper, we propose a class of non-oscillatory, entropy-stable discontinuous Galerkin (NOES-DG) schemes for solving hyperbolic conservation laws. By incorporating a specific form of artificial viscosity, our new scheme directly controls entropy production and suppresses spurious oscillations. 
To address the stiffness introduced by the artificial terms, which can restrict severely time step sizes, we employ the integrating factor strong stability-preserving Runge-Kutta method for time discretization. Furthermore, our method remains compatible with positivity-preserving limiters under suitable CFL conditions in extreme cases. Various numerical examples demonstrate the efficiency of the proposed scheme, showing that it maintains high-order accuracy in smooth regions and avoids spurious oscillations near discontinuities.

\vspace{.1in}

\noindent
\textbf{Key Words:} hyperbolic conservation laws,
discontinuous Galerkin method,
entropy stability, nonoscillatory.

\section{Introduction}

In this paper, we focus on simulating hyperbolic conservation laws, which describe how the balance of conserved quantities within a domain is determined by the flux across its boundaries. A celebrated example is the compressible Euler equations in gas dynamics. 
The discontinuous Galerkin (DG) method has emerged as a powerful and versatile tool for solving these problems, offering several distinctive advantages such as local conservation, high order accuracy, flexibility in handling complex geometries and general boundary conditions, as well as ease of adaptivity and parallel implementation. However, challenges remain in approximating the entropy solution with both numerical and theoretical justifications. The entropy solution is the unique physically relevant weak solution characterized by fulfilling entropy inequalities. Numerical methods that preserve these inequalities at the discrete level are called {\em entropy stable}. Under certain conditions, such schemes can recover the desired entropy solution upon convergence, see \cite{lax1960systems}. Meanwhile, the entropy solution may develop discontinuities or sharp gradient structures in finite time. Even entropy stable schemes may still produce spurious oscillations in the presence of such singular structures. These oscillations can severely contaminate approximation accuracy or lead to simulation failure. Therefore, novel techniques are needed to remove oscillations while preserving the entropy stability property.
The main objective of this work is to develop a simple, computationally efficient, and theoretically justified DG method that can simultaneously achieve discrete entropy stability and avoid nonphysical oscillations.

The past few decades have witnessed the tremendous development of the provably entropy conservative and entropy stable numerical methods. Notably, in the seminal work by Tadmor \cite{tadmor1987numerical,tadmor2003entropy}, the concepts of entropy conserving and entropy stable numerical fluxes were introduced in the context of finite volume schemes. This foundational work has significantly influenced the subsequent developments. For instance, in \cite{lefloch2002fully}, a procedure was proposed for designing high-order entropy conservative fluxes. Building upon these components, an entropy stable and arbitrarily high order version of ENO schemes, termed TeCNO, was developed in \cite{fjordholm2012arbitrarily}. 
Within the framework of DG discretization, it is shown in \cite{jiang1994cell} that the classic semi-discrete DG scheme is entropy stable. However, this property is limited to the square entropy function and requires exact integration.
To ensure entropy stability for a general entropy function and under quadrature approximations, two primary approaches have been developed. The first approach leverages the summation-by-parts (SBP) methodology \cite{fernandez2014generalized, fernandez2014review, svard2014review, del2018simultaneous, hicken2016multidimensional}. Pioneering contributions include \cite{carpenter2014entropy,gassner2013skew,gassner2016well}.
Furthermore, in \cite{chen2017entropy}, a Gauss-Lobatto quadrature-based entropy stable DG scheme is developed by introducing SBP operators in conjunction with the flux differencing technique \cite{fisher2013high, carpenter2014entropy, carpenter2016entropy}. 
In \cite{chan2018discretely, chan2019efficient}, a set of entropy stable DG methods allowing for arbitrary quadrature rules is developed by designing the hybridized SBP operators that combine the volume and surface quadrature nodes. 
The second approach was first introduced in \cite{abgrall2018general}, which seeks to explicitly control the production of entropy within each element by incorporating an artificial term in the DG weak formulation. Unlike the first approach, it does not rely on the flux differencing technique. In \cite{gaburro2023high}, inspired by \cite{abgrall2018general}, a fully discrete entropy conservative DG method under the ADER framework is proposed by choosing a novel artificial diffusion term for entropy correction together with the relaxation approach for time integration \cite{ketcheson2019relaxation}. However, this approach cannot effectively handle problems with discontinuous solutions, as it lacks the ability to control spurious oscillations.

In this work, motivated by \cite{gaburro2023high}, we develop a class of provably (in the semi-discrete sense) high order and entropy stable DG methods that preserve all desired properties of the classic DG method and is capable of eliminating spurious oscillations in the presence of shocks. The difficulty lies in the design of an artificial term that can simultaneously control entropy production and oscillations, as well as preserve the original high order accuracy. Furthermore, it is also desired that the modified formulation is convenient to implement and cost-effective. To address this, we generalize the entropy-based artificial diffusion framework \cite{guermond2011entropy} by incorporating an idea from the essentially oscillation-free DG (OFDG) method  \cite{liu2021oscillation}. The OFDG method employs a projection-based damping term in the DG formulation to control spurious oscillations,  and was extended in \cite{peng2024oedg} to ensure a scale-invariant property. 
In addition, \cite{peng2024oedg}  showed that the damping technique, which damps the modal coefficients of the high-order moments of the numerical solution, is closely related to the artificial viscosity approach.
In particular, the damping coefficient is adaptively adjusted by measuring discontinuity intensity, decreasing in smooth regions and increasing near discontinuities. In this work, we employ an artificial diffusion term rather than a damping term, and furthermore, the coefficient is determined to ensure both the entropy stability and oscillation-free property. Our work introduces three key improvements over existing approaches. First, in \cite{gaburro2023high}, the authors claimed that an arbitrary  piecewise polynomial approximation or interpolation with desired high order accuracy for  the entropy variable can ensure the entropy dissipation property. However, we disprove this claim by providing a counterexample. Moreover, we demonstrate that a specific interpolation property is required for ensuring provable entropy stability. Second, as mentioned in \cite{chen2020review}, the denominator of the coefficient in the entropy correction term can approach zero more rapidly than the numerator, potentially compromising accuracy. To circumvent the difficulty, we propose to further incorporate the oscillation coefficient to control the main coefficient, thereby preserving the accuracy of the scheme. 
Last, the proposed method attempts to directly control entropy production, allowing it to accommodate arbitrary monotone numerical fluxes and quadrature rules. Moreover, the algorithm can be implemented by reusing a classic DG code with only the artificial viscosity term added, significantly reducing development time.

Note that the proposed entropy artificial viscosity term may incur additional stiffness, which can lead to stringent CFL time step restriction when a standard  explicit strong-stability-preserving Runge-Kutta (SSP-RK) time discretization is used. To address this issue, we implement the explicit integral factor SSP-RK time discretization \cite{chen2021krylov}, which can greatly mitigate the time step constraint. Furthermore, for extreme problems such as the high Mach number astrophysical jet problem, it is necessary to ensure positivity of density and pressure to avoid simulation failures. Note that, as the artificial viscosity term is locally conservative, we can further adapt the positivity-preserving (PP) limiter developed in \cite{zhang2010positivity} with an appropriate adjustment on the CFL condition.

The rest of this paper is organized as follows. In Section 2, we introduce the formulation of semi-discrete non-oscillatory entropy stable DG schemes for both one-dimensional and two-dimensional scalar equations and systems, along with theoretical analysis on mass conservation, discrete entropy stability, and error estimation. In Section 3, we present the fully discrete formulations, integrating an integral factor SSP-RK time discretization and a modified PP limiter. In Section 4, various numerical examples are provided to demonstrate the efficiency and efficacy of our proposed scheme. The conclusions are discussed in Section 5.

\section{Semi-discrete entropy stable DG schemes}
\subsection{1D scalar cases}

Consider the following one-dimensional scalar conservation law 
\begin{equation}\label{hcl}
\begin{cases}
    u_t+f\left( u \right) _x=0,\quad x\in \Omega=[a,b], \, t>0,\\
    u\left( x,0 \right) =u_0\left( x \right) .\\
\end{cases}
\end{equation}
For simplicity, we assume that the exact solution has either periodic or compact support boundary conditions on domain $\Omega$.

Let $(U,F)$ denote an entropy pair, satisfying the relation $F'(u)=U'(u)f'(u)$.
Denote the entropy variable by $v = U'(u)$, and define $A(u)=U''(u)^{-1}$ which is always non-negative.
Then a weak solution of \eqref{hcl} is called an {\em entropy solution} if for all entropy pairs, we have that
\begin{equation}\label{es} 
    U(u)_t + F(u)_x \le 0 
\end{equation}
in the weak form.
Furthermore, integrating the entropy condition \eqref{es} in space, we get
\begin{equation} \label{eq:es-2}
    \frac{\mathrm d}{\mathrm dt}\int_{\Omega} U(u)\mathrm dx\le 0. 
\end{equation}
This means that the total entropy is non-increasing with respect to time.
In the following, we will design a DG scheme to satisfy the inequality \eqref{es} or \eqref{eq:es-2} numerically.

\subsubsection{The classical DG discretization}

We start with the classical DG scheme for \eqref{hcl}. 
Suppose the domain is divided into cells
$$ a= x_{1/2}<x_{3/2}<\cdots<x_{N+1/2} =b ,
\quad h_i=x_{i+1/2}-x_{i-1/2}.$$
Let $K_i=[x_{i-1/2},x_{i+1/2}]$ be the cell with its center $x_i=(x_{i+1/2}+x_{i-1/2})/2$, and 
$\mathcal K=\{K_i\}$ be the collocation of all cells. Without loss of generality, we assume the mesh is uniform and denote by $h=h_i$. 

Define the finite element space as
\begin{equation}
    V_h^k = \{w(x): w(x)|_{K}\in P^k(K),\ \forall K\in\mathcal K  \}, 
\end{equation}
where $P^k(K)$ is the space of polynomials of degree at most $k$ on cell $K$. Notice that for any $w\in V_h^k$, it can be discontinuous at the cell boundary $x_{i\pm 1/2}$. 
In the following, we let $w^+_{i+1/2}$ (resp. $w^-_{i+1/2}$) denote the limiting value of 
$w$ at $x_{i+1/2}$ from the element $K_{i+1}$ (resp. $K_i$), 
$[\![w]\!]_{i+1/2}=w^{+}_{i+1/2}-w^{-}_{i+1/2}$ denote its jump, 
and $\{w\}_{i+1/2} =\frac{1}{2} (w^{+}_{i+1/2} +w^{-}_{i+1/2})$ be its average at $x_{i+1/2}$. 

The semi-discrete DG scheme is defined as: seek $u_h\in V_h^k$, such that for all $w\in V_h^k$ and  $K_i\in \mathcal{K}$,
\begin{equation}\label{eq:DG_standard}
    \int_{K_i}{\frac{\partial u_h}{\partial t} w \,\mathrm{d}x}
    =\int_{K_i}{f\left( u_h \right) \frac{\partial w}{\partial x}\mathrm{d}x}
    -\hat{f}_{i+1/2}w _{i+1/2}^{-}+\hat{f}_{i-1/2}w _{i-1/2}^{+}.
\end{equation}
Here $\hat f_{i+1/2} = \hat f (u_h|^-_{i+1/2}, u_h|^+_{i+1/2})$ is the monotone numerical flux at $x_{i+1/2}$. In this work, we use the local Lax-Friedrichs flux
$$
    \hat f(u^-,u^+) = \frac{1}{2}\left[ f(u^+)+f(u^-) - \alpha(u^+ - u^-) \right],\quad 
    \alpha=\max\limits_{u\in I(u^-,u^+)} \left|f'(u)\right|, 
$$
where $I(u^-,u^+)$ is the interval defined between $u^-$ and $u^+$.

\subsubsection{The entropy stable non-oscillatory DG schemes}

Considering the equation \eqref{hcl} with an additional local viscosity in cell $K_i$
\begin{equation}\label{vis2} 
    u_t + f(u)_x = \sigma_i \left( \nu_i(x)A(u)v(u)_x \right)_x,\quad x\in K_i,
\end{equation}
where $\sigma_i$ is a local non-negative parameter, and $\nu_i$ satisfies
\begin{equation} \label{eq:cond_nu}
    \nu_i(x_{i- 1/2}) = \nu_i(x_{i+1/2}) = 0 \quad \text{and} 
    \quad \nu_i(x) \geq 0,\  x\in K_i. 
\end{equation}
In this work, we simply let
\begin{equation}\label{eq:nu}
    \nu_i(x) = 1 - X_i^2(x),\quad X_i(x) = \frac{x - x_i}{h/2}.
\end{equation}
Notice that the added viscous term is similar to that introduced in \cite{wei2024jump}. However,  \cite{wei2024jump} focuses solely on eliminating spurious oscillation by adding viscosity based on the smoothness of the solution $u$ itself. In contrast, we are more concerned with controlling entropy. The added viscosity term relies on the entropy terms $A(u)$ and $v(u)$ rather than $u$ itself, which provides additional entropy dissipation. By carefully choosing the parameter $\sigma_i$ we are able to achieve numerical entropy stability while avoiding spurious oscillations.

The semi-discrete non-oscillatory entropy stable DG scheme (denoted as NOES-DG) is defined based on the modified viscous equation \eqref{vis2}: 
find $u_h\in V_h^k$, such that for all $w\in V_h^k$ and $K_i\in\mathcal K$,
\begin{equation}\label{scheme1}
\begin{aligned}
    \int_{K_i}{\frac{\partial u_h}{\partial t}w\, \mathrm{d}x} 
    =& \int_{K_i}{f\left( u_h \right) \frac{\partial w}{\partial x}\mathrm{d}x}-\hat{f}_{i+1/2}w _{i+1/2}^{-}+\hat{f}_{i-1/2}w _{i-1/2}^{+} \\
    -&\sigma _i\int_{K_i}{\nu _i \frac{\partial v_h}{\partial x} A\left( u_h \right) \frac{\partial w}{\partial x} \mathrm{d}x}.
\end{aligned}
\end{equation}
Note that $\nu_i(x)$ vanishes at $x_{i\pm 1/2}$ \eqref{eq:cond_nu}, and hence the boundary terms resulting from integration by parts are nullified, simplifying the numerical formulation.
The function $v_h\in V_h^k$ is a piecewise polynomial approximating the entropy variable $v(u_h)$ with condition
\begin{equation}\label{vh} 
    v_h(x_{i+1/2}^-) = v(u_h(x_{i+1/2}^-)),\quad 
    v_h(x_{i-1/2}^+) = v(u_h(x_{i-1/2}^+)).
\end{equation}
This condition is critical for entropy control, which will be discussed in detail below. We let $v_h$ be the $(k+1)$-th order interpolating polynomial of $v(u_h)$ based on the Gauss-Lobatto quadrature points in $K_i$, hence automatically satisfying \eqref{vh}.

In addition, the parameter $\sigma_i$ is defined as 
\begin{equation}\label{eq:sigma}
\sigma_i = \max\left\{\sigma_i^{jump},\sigma_i^{entropy}\right\},
\end{equation}
where
\begin{subequations}
\begin{align}
\sigma _{i}^{jump}&=c_f\left( h\left\| [\![ u_h ]\!] \right\| _{\partial K_i}+\sum_{l=1}^k{l\left( l+1 \right) h^{l+1}\left\| [\![ \partial _{x}^{l}u_h ]\!] \right\| _{\partial K_i}} \right),\label{eq:jump}
\\ \sigma _{i}^{entropy}&=\max\left\{\frac{F_i}{E_i},0\right\}, \label{eq:sigma2}
\\E_i &= \int_{K_i}\nu_i\frac{\partial v_h}{\partial x}A\frac{\partial v_h}{\partial x}\mathrm dx,\label{eq:E}
\\ F_i &= \displaystyle\hat{F}_{i+1/2}-\hat{F}_{i-1/2}-\hat{f}_{i+1/2}^{C}v_{i+1/2}^{-}+\hat{f}_{i-1/2}^{C}v_{i-1/2}^{+}+\int_{K_i}{f\left( u_h \right)  \frac{\partial v_h}{\partial x}\mathrm{d}x}\label{eq:F}.
\end{align}
\end{subequations}
Moreover, to ensure entropy stability, we need that the quadrature rules used to approximate the integrals in \eqref{scheme1} are identical for approximating the integrals in $E_i$ and $F_i$.
The choice of  $\sigma_i^{entropy}$ is proposed in \cite{gaburro2023high} to balance the entropy production.  
Here, $\hat F$ in \eqref{eq:F} denotes a numerical entropy flux consistent with $F$, i.e., $\hat F(u,u)=F(u)$. In this work, we choose $\hat F$ to be the central flux
\begin{equation}\label{fhat}
\hat F(u^-,u^+) = \frac{1}{2}\left( F(u^+) + F(u^-) \right). 
\end{equation}

\noindent
Assume that the numerical flux is split into $\hat f=\hat f^C + \hat f^D$, where $\hat f^C$ and $\hat f^D$ denote the central part and the diffusive part, respectively. For example, for the Lax-Friedrichs flux, we have
\begin{equation} \label{LFCD}
\hat f^C(u^-,u^+) = \frac{1}{2}(f(u^+) + f(u^-)),\quad 
\hat f^D(u^-,u^+) = -\frac{1}{2}\alpha(u^+-u^-). 
\end{equation}
Note that several other popular monotone numerical fluxes, such as the Engquist–Osher flux and Godunov flux, allow for such a splitting \cite{leveque2002finite}. 
Below, we will show that when $\sigma_i\ge\sigma_i^{entropy}$, the scheme is entropy stable.

Meanwhile, the DG formulation \eqref{scheme1} with $\sigma_i = \sigma^{entropy}_i$, while entropy stable, may still generate spurious oscillations in the presence of strong shocks. To remedy such a drawback, we further incorporate $\sigma_i^{jump}$ to suppress oscillations and define $\sigma_i$ as given in \eqref{eq:sigma}. $\sigma _{i}^{jump}$, originally proposed by the OFDG method \cite{liu2021oscillation} and slightly modified in \cite{wei2024jump}, measures discontinuity intensity and is defined with 
$$\left\|[\![\partial_x^lu_h]\!]\right\|_{\partial K_i} = \left|[\![\frac{\partial^l u_h}{\partial x^l}]\!]_{x_{i-1/2}}\right| + \left|[\![\frac{\partial^l u_h}{\partial x^l}]\!]_{x_{i+1/2}}\right|.$$
The coefficient $c_f$  is defined as $c_f = c_0\max\limits_{ x\in K_i}\{ \left|f'(u_h(x)) \right| \}$, as introduced in \cite{peng2024oedg}, ensuring a scale-invariant property. Here, $c_0$ is a constant. However, since $\sigma_{i}^{entropy}$ in \eqref{eq:sigma2} is not scale-invariant, the proposed technique using \eqref{eq:sigma} does not possess such a property.

Moreover,  $E_i$ may approach zero, causing the parameter  $\sigma^{entropy}_i$ in \eqref{eq:sigma2} to become very large. This can greatly compromise the accuracy of the scheme \cite{chen2020review} and incur significant stiffness.
Note that even for smooth problems, we do not have  control over the parameter $\sigma^{entropy}_i$.
As an example, consider the linear flux $f(u) = u$ with the square entropy $U(u)=u^2$, and $u_h$ is a $(k+1)$-th order approximate of $u$, then we have the estimates of $F_i$ and $E_i$
$$ F_i=\frac{1}{4}\left[(u_h|_{i+1/2}^+-u_h|_{i+1/2}^-)^2-(u_h|_{i-1/2}^+-u_h|_{i-1/2}^-)^2\right]\lesssim h^{2k+2},
\quad E_i\lesssim h^2.$$
Here, $A \lesssim B$ means that there exists a constant $c > 0$ independent of $h$ such that $A \leq c B$.
However, we cannot directly estimate the ratio $F_i/E_i$ unless there is a strictly positive lower bound of $E_i$,  which is not the case in practice. 
Therefore, in the simulation, we set
\begin{equation}
    \sigma _{i}^{entropy}=\min \left\{ \max \left\{\frac{F_i}{E_i},0\right\},C\, \sigma_i^{jump}\right\},\label{sigma2}
\end{equation}
instead of \eqref{eq:sigma2} to overcome the difficulty,  where
$C > 1$ is a free parameter.

\subsubsection{Theoretical properties}

In this section, we analyze the proposed NOES-DG scheme \eqref{scheme1} and establish its several theoretical properties, including mass conservation, discrete entropy stability, and error estimates. 

\begin{Thm} \textbf{(Mass Conservation)}
The NOES-DG scheme \eqref{scheme1} is conservative, i.e.
\begin{equation}\label{eq:conservation}
\frac{\mathrm d }{\mathrm d t}\int_{\Omega}u_h\mathrm{d}x =0.
\end{equation}
\end{Thm}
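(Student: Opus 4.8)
The plan is to exploit the standard observation that constant functions belong to the finite element space $V_h^k$ (since degree-zero polynomials lie in $P^k(K)$), and to test the scheme \eqref{scheme1} against $w\equiv 1$. With this choice $\partial w/\partial x = 0$, which immediately annihilates two of the four terms on the right-hand side: both the volume flux integral $\int_{K_i} f(u_h)\,\partial_x w\,\mathrm{d}x$ and, crucially, the artificial viscosity integral $\sigma_i\int_{K_i}\nu_i\,\partial_x v_h\,A\,\partial_x w\,\mathrm{d}x$ vanish. The point I would emphasize is that the viscous term appears in divergence (conservative) form, with its spatial derivative already integrated by parts onto the test function; this is precisely what makes the added artificial viscosity locally mass-conservative, independently of the value of $\sigma_i$ or the specific choices of $\nu_i$, $A$, and $v_h$.

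After these cancellations, the scheme on cell $K_i$ collapses to $\int_{K_i}\partial_t u_h\,\mathrm{d}x = -\hat f_{i+1/2} + \hat f_{i-1/2}$, using $w^-_{i+1/2}=w^+_{i-1/2}=1$. The next step is to sum this identity over all cells $K_i\in\mathcal K$. On the left, interchanging the time derivative with the (time-independent) spatial integration produces $\frac{\mathrm d}{\mathrm dt}\int_\Omega u_h\,\mathrm{d}x$. On the right, the single-valued numerical flux $\hat f_{i+1/2}$ is shared between the neighboring cells $K_i$ and $K_{i+1}$ with opposite signs, so the sum telescopes and only the two global boundary fluxes $\hat f_{1/2}$ and $\hat f_{N+1/2}$ survive.

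Finally, I would invoke the boundary conditions assumed just after \eqref{hcl}. Under periodic boundary conditions the traces at $x_{1/2}$ and $x_{N+1/2}$ coincide, so $\hat f_{1/2}=\hat f_{N+1/2}$ and the boundary contribution cancels; under compact-support conditions both boundary fluxes vanish. In either case the right-hand side is zero, which yields \eqref{eq:conservation}. I do not anticipate any genuine obstacle here—the result is essentially a consequence of the conservative structure of the formulation (flux-differencing together with divergence-form viscosity). The only detail worth stating carefully is that $\hat f_{i+1/2}$ is single-valued across each shared interface, which is what guarantees the telescoping; the artificial term, being in divergence form, never enters the conservation balance at all.
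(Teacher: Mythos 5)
Your proof is correct and follows exactly the paper's argument: take $w\equiv 1$ in \eqref{scheme1}, observe that both derivative-bearing terms (including the viscous term) vanish, and sum the resulting flux-difference identity over all cells so that the single-valued numerical fluxes telescope under the periodic or compact-support boundary conditions. Your write-up simply spells out the telescoping and boundary details that the paper leaves implicit.
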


\begin{proof}
Taking $w = 1$ in \eqref{scheme1}, we get
$$
\int_{K_i}{\frac{\partial u_h}{\partial t}\mathrm{d}x}+\hat{f}_{i+1/2}-\hat{f}_{i-1/2}=0.
$$
Summing over $i$ yields the conservation \eqref{eq:conservation}.
\end{proof}

\vspace{0.3cm}
\begin{Thm}\label{esthm}  \textbf{(Entropy stability)}
If 
\begin{equation}\label{sigma3}
\max \left\{\frac{F_i}{E_i},0\right\}<C\,\sigma_i^{jump},\quad\forall K_i\in\mathcal K,
\end{equation}
then the NOES-DG scheme \eqref{scheme1} is entropy stable in the sense of
\begin{equation}\label{es0}
\int_{\Omega}{\frac{\partial u_h}{\partial t}v_h\mathrm{d}x} \le 0.
\end{equation}
\end{Thm}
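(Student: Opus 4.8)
The plan is to test the weak formulation \eqref{scheme1} with the choice $w = v_h$ on each cell $K_i$, and then sum over all cells. The role of condition \eqref{vh} is exactly to make the interface traces of $v_h$ coincide with $v(u_h)$, i.e. $v_h(x_{i+1/2}^-) = v_{i+1/2}^-$ and $v_h(x_{i-1/2}^+) = v_{i-1/2}^+$ in the notation of \eqref{eq:F}. Consequently, after substituting $w=v_h$, the surface contributions $-\hat f_{i+1/2}v_h(x_{i+1/2}^-) + \hat f_{i-1/2}v_h(x_{i-1/2}^+)$ become precisely the interface quantities appearing in $F_i$, and the viscous volume term collapses exactly to $-\sigma_i E_i$ by the definition \eqref{eq:E}. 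This alignment of the scheme with the auxiliary quantities $E_i$ and $F_i$ is where \eqref{vh} is indispensable, and it also requires using the \emph{same} quadrature in the scheme, in $E_i$, and in $F_i$ so that the identity below is exact rather than approximate.

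Next I would insert the flux splitting $\hat f = \hat f^C + \hat f^D$ from \eqref{LFCD} and compare against the definition \eqref{eq:F}. Grouping the central-flux surface terms with the volume integral $\int_{K_i}f(u_h)\,\partial_x v_h\,\mathrm dx$ reproduces $F_i - \hat F_{i+1/2} + \hat F_{i-1/2}$, which leaves the local identity
\begin{equation*}
\int_{K_i}\frac{\partial u_h}{\partial t}v_h\,\mathrm dx
= F_i - \sigma_i E_i - \hat F_{i+1/2} + \hat F_{i-1/2}
- \hat f^D_{i+1/2}v_{i+1/2}^- + \hat f^D_{i-1/2}v_{i-1/2}^+ .
\end{equation*}
Summing over $i$ partitions the right-hand side into three groups, and I would show each is nonpositive (or zero).

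The numerical-entropy-flux differences $-\hat F_{i+1/2} + \hat F_{i-1/2}$ telescope and vanish under the periodic or compactly supported boundary conditions. For the diffusive surface terms I would regroup by interfaces: the contribution from $K_i$ and from $K_{i+1}$ at $x_{i+1/2}$ combine to $\hat f^D_{i+1/2}\,[\![v]\!]_{i+1/2} = -\tfrac12\alpha_{i+1/2}[\![u]\!]_{i+1/2}[\![v]\!]_{i+1/2}$, which is nonpositive since $\alpha\ge 0$ and, because $v = U'(u)$ with $U$ convex, $v$ is nondecreasing in $u$ so $[\![u]\!]$ and $[\![v]\!]$ carry the same sign. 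Finally, for $\sum_i(F_i - \sigma_i E_i)$ I would invoke the choice of $\sigma_i$: the hypothesis \eqref{sigma3} forces $\sigma_i^{entropy} = \max\{F_i/E_i,0\}$ in \eqref{sigma2}, so by \eqref{eq:sigma} $\sigma_i \ge \sigma_i^{entropy} \ge F_i/E_i$; since $E_i\ge 0$ by \eqref{eq:E}, this gives $\sigma_i E_i \ge F_i$ and hence each summand is nonpositive. Adding the three groups yields \eqref{es0}.

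The main obstacle is less any single estimate than the careful bookkeeping that reconciles the surface and volume terms generated by testing against $v_h$ with the precise definitions \eqref{eq:E}--\eqref{eq:F}; the linchpins are the trace-matching condition \eqref{vh} and the sign control of the Lax--Friedrichs diffusion through the monotonicity of $v(u)$. A secondary point requiring care is the degenerate case $E_i = 0$ (equivalently $v_h$ constant on $K_i$), where the ratio $F_i/E_i$ is undefined; there one reads $\sigma_i E_i \ge F_i$ as the entropy balance already holding with $\sigma_i E_i = 0$, and I would either treat this case separately or note that \eqref{sigma3} presupposes $E_i>0$.
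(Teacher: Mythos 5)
Your proof is correct and follows essentially the same route as the paper's: testing \eqref{scheme1} with $w = v_h$, using the flux splitting and the definitions \eqref{eq:E}--\eqref{eq:F} to reduce the local balance to $F_i - \sigma_i E_i$ plus telescoping entropy-flux differences and interface diffusive terms, then concluding from $\sigma_i E_i \ge F_i$ (forced by \eqref{sigma3} together with \eqref{eq:sigma} and \eqref{sigma2}) and the nonpositivity of $\hat f^D_{i+1/2}[\![v]\!]_{i+1/2}$, which rests on the trace condition \eqref{vh} and the monotonicity of $v = U'(u)$ (the paper phrases this via the mean value theorem as $U''(\xi)\ge 0$, which is the same fact). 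Your explicit flagging of the degenerate case $E_i = 0$ is a point the paper's proof leaves implicit, and your reading that \eqref{sigma3} presupposes $E_i>0$ (otherwise the ratio is undefined or the hypothesis fails) is the right resolution.
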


\begin{proof}
Taking $w = v_h$ in scheme \eqref{scheme1}, we get
$$
\int_{K_i}\frac{\partial u_h}{\partial t}v_h\mathrm{d}x\le -\hat{F}_{i+1/2}+\hat{F}_{i-1/2}-\hat{f}_{i+1/2}^{D}v_h|_{i+1/2}^{-}+\hat{f}_{i-1/2}^{D}v_h|_{i-1/2}^{+}.
$$
Sum it over $i$ and we can obtain 
\begin{equation}\label{es1}
\int_{\Omega}{\frac{\partial u_h}{\partial t}v_h\mathrm{d}x}\le -\sum_i{\left( \hat{f}_{i+1/2}^{D}v_h|_{i+1/2}^{-}-\hat{f}_{i-1/2}^{D}v_h|_{i-1/2}^{+} \right)}.
\end{equation}
Taking advantage of the periodic or compact support boundary, we get
\begin{align}
\mathrm{RHS}&=-\sum_i{\left( \hat{f}_{i+1/2}^{D}v_h|_{i+1/2}^{-}-\hat{f}_{i+1/2}^{D}v_h|_{i+1/2}^{+} \right)}
\nonumber\\
&=-\sum_i{\frac{1}{2}\alpha _{i+1/2}\left( u_h|_{i+1/2}^{+}-u_h|_{i+1/2}^{-} \right) \left( v_h|_{i+1/2}^{+}-v_h|_{i+1/2}^{-} \right)}
\nonumber\\
&=-\sum_i{\frac{1}{2}\alpha _{i+1/2}\left( u_h|_{i+1/2}^{+}-u_h|_{i+1/2}^{-} \right) \left( v\left( u_h|_{i+1/2}^{+} \right) -v\left( u_h|_{i+1/2}^{-} \right) \right)}
\label{es2}\\
&=-\sum_i{\frac{1}{2}\alpha _{i+1/2}\left( u_h|_{i+1/2}^{+}-u_h|_{i+1/2}^{-} \right) ^2\frac{v\left( u_h|_{i+1/2}^{+} \right) -v\left( u_h|_{i+1/2}^{-} \right)}{u_h|_{i+1/2}^{+}-u_h|_{i+1/2}^{-}}}
\nonumber\\
&=-\sum_i{\frac{1}{2}\alpha _{i+1/2}\left( u_h|_{i+1/2}^{+}-u_h|_{i+1/2}^{-} \right) ^2U''\left( \xi \right)} \le 0,  \nonumber
\end{align}
where $\xi\in I(u_h|^-_{i+1/2},u_h|^+_{i+1/2})$. Then, we can have \eqref{es0}.
\end{proof}

\begin{Remark}\label{rmk1}
In this proof, we observe the necessity of $v_h$ to satisfy the equation \eqref{vh}, as we must have 
$v(u_h|_{i\pm 1/2}^\mp)=v_h|^\mp_{i\pm 1/2}$ in the third equation of \eqref{es2}.
In \cite{gaburro2023high}, the authors claimed that $v_h\in V_h^k$ can be an arbitrary piecewise polynomial approximation of $v(u_h)$,
because they believed that the following inequality always holds
\begin{equation}\label{esfd}
-\sum_i{\left( \hat{f}_{i+1/2}^{D}v_h|_{i+1/2}^{-} 
-\hat{f}_{i-1/2}^{D}v_h|_{i-1/2}^{+} \right)}\le 0.
\end{equation} 
Thus, \eqref{es1} directly leads to \eqref{es0}. 
However, we have found that \eqref{esfd} is not always valid, as demonstrated by the following counterexample. 
Assume $\Omega=[-1,2]$ is divided by a single element $K=\Omega$, and consider the entropy function $U(u)=-\ln u$. 
The numerical solution is a $P^2$ polynomial $u_h(x)=x^2+1$, extended periodically. Consequently, we have $ v\left( u_h \right) =-1/(x^2+1) $. If we define $v_h$ as the interpolation of $v(u_h)$ at $x=-1^+, x=-0.9$ and $x=-0.8$, then we obtain that 
$v_h|_{1/2}^-=v_h(2^-)<v_h(-1^+) =v_h|_{-1/2}^+ =v_h|_{1/2}^+$. 
Moreover, it is obvious that $u_h|_{1/2}^- = u_h(2^-)> u_h(-1^+) =u_h|_{-1/2}^+ =u_h|_{1/2}^+$. Now we have
$$
-\left( \hat{f}_{1/2}^{D}v_h|_{1/2}^{-}-\hat{f}_{-1/2}^{D}v_h|_{-1/2}^{+} \right) 
=-\alpha_{1/2} \left( u_h|_{1/2}^{+}-u_h|_{1/2}^{-} \right) \left( v_h|_{1/2}^{+}-v_h|_{1/2}^{-} \right) >0,
$$
which contradicts \eqref{esfd}. This indicates that $v_h$ cannot be chosen as an arbitrary piecewise polynomial approximation or interpolation of $v(u_h)$. However, with the condition \eqref{vh}, we are able to show that \eqref{esfd} holds. 
\end{Remark}

\begin{Remark}\label{rmk2}
Although $u_x = Av_x$ holds for the exact solution, we must use the form $A(u_h)\dfrac{\partial v_h}{\partial x}$ in the last term of \eqref{scheme1} instead of $\dfrac{\partial u_h}{\partial x}$. This guarantees that, when taking $w = v_h$ in \eqref{scheme1}, the integral in the last term could be non-negative, allowing us to finally arrive at \eqref{es0}.
\end{Remark}

\begin{Remark}\label{rmk4}
For the general entropy functions, we have that 
\begin{equation*}
\begin{aligned}
    \int_{\Omega}{\frac{\partial u_h}{\partial t}v_h\mathrm{d}x} 
    =& \sum_{i}\int_{K_i}{\frac{\partial u_h}{\partial t}v_h\mathrm{d}x} \\
    =& \sum_{i} \left( \sum_{\alpha=1}^{k+1} \hat\omega_i^\alpha \frac{\partial u_h}{\partial t}(\hat{x}^\alpha_i) v_h(\hat{x}^\alpha_i)  + \mathcal{O}(h^{2k+1}) \right) \\
    =& \sum_{i} \left( \sum_{\alpha=1}^{k+1} \hat\omega_i^\alpha \frac{\partial U(u_h)}{\partial t}(\hat{x}^\alpha_i)  + \mathcal{O}(h^{2k+1}) \right) \\
    =& \frac{\mathrm d}{\mathrm dt}\int_{\Omega} U(u_h) \mathrm{d}x + \mathcal{O}(h^{2k}).
\end{aligned}
\end{equation*}
Here, $\hat\omega_i^\alpha$ and $\hat{x}^\alpha_i$ are the Gauss-Lobatto weights and quadrature points in each cell $K_i$, with $\hat{x}_i^1=x_{i-1/2}$ and $\hat{x}^{k+1}_{i}=x_{i+1/2}$. This indicates that the entropy can be considered stable, with only a small, high order growth. Also see \cite{gaburro2023high}.
\end{Remark}

Next, we present the prior error estimate for the linear scalar conservation law with $f(u) = u$. 
Let us recall some useful inequalities. For a piecewise polynomial $w\in V_h^k$, we have the inverse inequality
\begin{equation}\label{inv}
\left\| \frac{\partial w}{\partial x} \right\|_{L^2(K_i)}\lesssim h^{-1} \left\|w\right\|_{L^2(K_i)},\ \ 
\left\| w\right\|_{L^\infty(K_i)}\lesssim h^{-1/2} \left\|w\right\|_{L^2(K_i)},
\quad \forall K_i\in\mathcal K.  
\end{equation}
And for a given function $u$, its Gauss-Radau projection $P^-u\in V_h^k$ is uniquely defined by
\begin{equation} 
\begin{cases}\int_{K_i}(P^-u-u)w\,\mathrm dx=0, \quad \forall \ w\in P^{k-1}(K_i),\\ 
P^-u(x_{i+1/2}^-) = u(x_{i+1/2}^-). 
\end{cases}
\end{equation}
Assume $u\in H^{k+1}(\Omega)$, then the Gauss-Radau projection $P^-u$ satisfies the error estimate \cite{cockburn2001superconvergence}
\begin{equation}\label{GR}
\left| P^-u-u \right|_{m,K_i}\lesssim h^{k+1-m}\left\| u \right\| _{k+1,K_i}
\end{equation}
for all $K_i\in\mathcal K$ and $0\le m\le k$. Here, $\left|\cdot \right|_{m,K}$ and $\left\| \cdot \right\|_{m,K}$ denote the Sobolev (semi-)norms
$$
\left| u \right|_{m,K}=\left( \int_K{\left( \frac{\partial ^mu}{\partial x^m} \right) ^2\mathrm{d}x} \right) ^{1/2},\quad  \left\| u \right\| _{m,K}=\left( \sum_{l=0}^m{\int_K{\left( \frac{\partial ^lu}{\partial x^l} \right) ^2\mathrm{d}x}} \right) ^{1/2}.
$$
Now we give an error estimate of our scheme.

\begin{Thm}\label{acc} \textbf{(Error Estimate)}
Suppose $u$ is a smooth solution of \eqref{hcl} with periodic or compactly supported boundary conditions. Consider equation \eqref{hcl} with linear flux $f(u) = u$ and square entropy $U=u^2/2$. If the initial data is chosen by the standard $L^2$ projection of $u_0(x)$, i.e.,
$$ \int_{K_i}u_h(x,0)\phi(x)\mathrm dx=\int_{K_i}u_0(x)\phi(x)\mathrm dx,\quad \forall K_i\in\mathcal K,\phi\in V_h^k, $$
then the numerical solution $u_h$ of NOES-DG scheme \eqref{scheme1} with the fluxes \eqref{fhat}-\eqref{LFCD} satisfies
\begin{equation}\label{esti0}
   \left\| u(\cdot,t)-u_h(\cdot,t) \right\|_{L^2(\Omega)} \lesssim h^{k+1}. 
\end{equation} 
\end{Thm}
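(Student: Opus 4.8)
The plan is to run the classical energy argument for the $L^2$ error analysis of the upwind DG method, treating the entropy--viscosity term as a perturbation that is small enough not to spoil the optimal rate. First I would specialize the scheme to this setting: with $f(u)=u$ and $U=u^2/2$ we have $v=U'(u)=u$, so $A(u)=U''(u)^{-1}=1$ and $\alpha=1$. Since $v_h$ is the $(k+1)$-point Gauss--Lobatto interpolant of $v(u_h)=u_h\in P^k$, interpolation is exact and $v_h=u_h$; moreover the local Lax--Friedrichs flux reduces to the upwind flux $\hat f_{i+1/2}=u_h|^-_{i+1/2}$. The artificial term becomes $\sigma_i\int_{K_i}\nu_i\,\partial_x u_h\,\partial_x w\,\mathrm dx$, and from the simulation definition \eqref{sigma2} with $C>1$ one has $\sigma_i\le C\,\sigma_i^{jump}$, where $c_f=c_0$ is constant because $|f'|\equiv 1$.

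Next I would set up the error equation using the Gauss--Radau projection $P^-$. Writing $u-u_h=\eta+\xi$ with $\eta=u-P^-u$ and $\xi=P^-u-u_h\in V_h^k$, and noting that a smooth $u$ satisfies the DG weak form exactly with vanishing viscous term and continuous flux, I would subtract the two formulations, take $w=\xi$, and sum over cells. The two standard identities do the work on the transport part: by the defining properties $\eta|^-_{i+1/2}=0$ and $\int_{K_i}\eta\,\partial_x\xi\,\mathrm dx=0$ the projection--flux cross terms cancel, $\sum_i B_i(\eta,\xi)=0$, while the upwinding gives $\sum_i B_i(\xi,\xi)=\tfrac12\sum_j [\![\xi]\!]^2_{j+1/2}\ge 0$. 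This yields
\[
\tfrac12\tfrac{\mathrm d}{\mathrm dt}\|\xi\|_{L^2(\Omega)}^2+\tfrac12\sum_j[\![\xi]\!]^2_{j+1/2}=-\sum_i\int_{K_i}\eta_t\,\xi\,\mathrm dx+\mathcal V,
\]
where $\mathcal V=\sum_i\sigma_i\int_{K_i}\nu_i\,\partial_x u_h\,\partial_x\xi\,\mathrm dx$. Since $\eta_t$ is the projection error of $u_t$, the first term on the right is bounded by $\|\eta_t\|\,\|\xi\|\lesssim h^{k+1}\|\xi\|\le\tfrac12(h^{2k+2}+\|\xi\|^2)$ via \eqref{GR} and Young's inequality.

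The heart of the argument is the control of $\mathcal V$, and this is where I expect the main obstacle to lie: a naive bound uses the inverse inequality $\|\partial_x\xi\|\lesssim h^{-1}\|\xi\|$ and loses a full power of $h$. I would avoid this by writing $\partial_x u_h=\partial_x P^-u-\partial_x\xi$, so that $\mathcal V=\mathcal V_1-\sum_i\sigma_i\int_{K_i}\nu_i(\partial_x\xi)^2\,\mathrm dx$, and discarding the last, nonnegative, dissipative term. For $\mathcal V_1=\sum_i\sigma_i\int_{K_i}\nu_i\,\partial_x P^-u\,\partial_x\xi\,\mathrm dx$ the crucial step is to integrate by parts in each cell: the boundary contributions vanish precisely because $\nu_i(x_{i\pm1/2})=0$ by \eqref{eq:cond_nu}, giving $\mathcal V_1=-\sum_i\sigma_i\int_{K_i}\partial_x(\nu_i\,\partial_x P^-u)\,\xi\,\mathrm dx$. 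This moves the derivative onto the smooth factor, trading the fatal $h^{-1}$ for the harmless $\|\partial_x(\nu_i\,\partial_x P^-u)\|_{L^2(K_i)}\lesssim h^{-1/2}$ (the dominant contribution being $\nu_i'\sim h^{-1}$ times $\partial_x P^-u\sim 1$ in $L^2$ over a cell of size $h$). It then remains to bound $\sigma_i\le C\sigma_i^{jump}$; splitting the jump seminorm into a smooth part and a $\xi$ part gives $\sigma_i\lesssim h^{k+2}+h^{1/2}\|\xi\|_{\tilde K_i}$, where $\tilde K_i=K_{i-1}\cup K_i\cup K_{i+1}$. Here the smooth part uses pointwise estimates $|[\![\partial_x^l P^-u]\!]|\lesssim h^{k+1-l}$ (so that $h^{l+1}|[\![\partial_x^l P^-u]\!]|\lesssim h^{k+2}$), while the $\xi$ part uses the inverse--trace bound $|\partial_x^l\xi|_{\partial K}\lesssim h^{-l-1/2}\|\xi\|_{L^2(K)}$. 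Inserting this into $\mathcal V_1$, applying Cauchy--Schwarz in the cell index with $N\sim h^{-1}$, and using $\sum_i\|\xi\|_{\tilde K_i}\|\xi\|_{K_i}\lesssim\|\xi\|^2_{L^2(\Omega)}$, I obtain $\mathcal V_1\lesssim h^{2k+2}+\|\xi\|^2_{L^2(\Omega)}$.

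Finally I would assemble these bounds into the differential inequality $\tfrac{\mathrm d}{\mathrm dt}\|\xi\|^2\lesssim h^{2k+2}+\|\xi\|^2$, discarding the nonnegative jump and dissipation terms on the left. The initial error satisfies $\|\xi(0)\|\lesssim h^{k+1}$, since both $P^-u_0$ and the $L^2$ projection approximate $u_0$ to order $k+1$. Grönwall's inequality over a bounded time interval then gives $\|\xi(t)\|\lesssim h^{k+1}$, and the triangle inequality with $\|\eta\|\lesssim h^{k+1}$ yields the claimed estimate \eqref{esti0}. The decisive point, and the step I would check most carefully, is the treatment of $\mathcal V$: it is the integration by parts enabled by $\nu_i|_{\partial K_i}=0$, combined with the smooth/$\xi$ decomposition of $\sigma_i^{jump}$, that keeps the artificial viscosity from degrading the optimal order.
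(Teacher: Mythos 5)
Your proof is correct and structurally identical to the paper's: same Gauss--Radau splitting $u-u_h=\eta+\xi$, same cancellation $\sum_i B_i(\eta,\xi)=\int \eta_t\,\xi$, same coercivity of $B_i(\xi,\xi)$ via the upwind jumps, same rewriting $\partial_x u_h=\partial_x P^-u-\partial_x\xi$ with the quadratic dissipative term discarded, same use of the practical cap $\sigma_i\le C\sigma_i^{jump}$ from \eqref{sigma2} with the jump split into a smooth part ($\lesssim h^{k+2}$) and a $\xi$ part ($\lesssim h^{1/2}\|\xi\|_{\tilde K_i}$), and Gr\"onwall to finish. The one place you diverge is the cross term $\mathcal V_1$: you integrate by parts using $\nu_i|_{\partial K_i}=0$ to put the derivative on $\nu_i\,\partial_x P^-u$, whereas the paper simply applies Cauchy--Schwarz together with the inverse inequality $\|\partial_x\xi\|_{L^2(K_i)}\lesssim h^{-1}\|\xi\|_{L^2(K_i)}$ and $\|\partial_x P^-u\|_{L^2(K_i)}\lesssim h^{1/2}$. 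Note that your stated motivation for the detour is mistaken: the inverse-inequality route does \emph{not} lose a power of $h$ here, because both routes land on the identical per-cell bound $\sigma_i h^{-1/2}\|\xi\|_{L^2(K_i)}$, and the apparent loss of $h^{-1}$ is absorbed by the smallness of $\sigma_i$ itself ($\sigma_i\lesssim h^{k+2}+h^{1/2}\|\xi\|_{\tilde K_i}$), which is exactly the mechanism the paper exploits via $\sum_i\sigma_i^2h^{-1}\lesssim \|e_h\|^2+h^{2k+2}$. So your integration by parts is a harmless stylistic variant (it does slightly reduce reliance on inverse estimates for $\xi$, at the cost of needing $\|\partial_x(\nu_i\,\partial_x P^-u)\|_{L^2(K_i)}\lesssim h^{-1/2}$, i.e., control of $\nu_i'\sim h^{-1}$ and of $\partial_{xx}P^-u$); otherwise the two arguments coincide, including the treatment of the initial error and the final triangle inequality.
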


\begin{proof}
Introduce a short-hand notation
$$
B_i\left( u_h,w \right) =\int_{K_i}{\frac{\partial u_h}{\partial t}w\, \mathrm{d}x}-\int_{K_i}{u_h\frac{\partial w}{\partial x}\mathrm{d}x}
+u_{h}|_{i+1/2}^{-} w_{i+1/2}^{-}-u_{h}|_{i-1/2}^{-} w_{i-1/2}^{+},
$$
then the scheme \eqref{scheme1} can be written as
$$
B_i\left( u_h,w \right) =-\sigma_i\int_{K_i}{\nu _i\frac{\partial u_h}{\partial x}\frac{\partial w}{\partial x}\mathrm{d}x},\quad \forall w\in V_h^k,\ K_i\in\mathcal K
$$
On the other hand, the exact solution $u$ satisfies
$$ B_i(u,w) = 0,\quad \forall w\in V_h^k,\ K_i\in\mathcal K. $$
Hence,
\begin{equation}\label{error} 
B_i(u-u_h,w)=\sigma_i\int_{K_i}\nu_i\frac{\partial u_h}{\partial x}\frac{\partial w}{\partial x}\mathrm{d}x,
\quad\forall w\in V_h^k,\ K_i\in\mathcal K.
\end{equation}
Denote
$$\varepsilon_h=u-P^-u,\quad e_h=P^-u-u_h\in V^k_h.$$ 
Taking $w=e_h$, \eqref{error} becomes
\begin{equation}\label{acc2}
B_i\left( e_h,e_h \right)+\sigma_i\int_{K_i}\nu_i\left(\frac{\partial e_h}{\partial x}\right)^2\mathrm dx =-B_i\left( \varepsilon _h,e_h \right) +\sigma _i\int_{K_i}{\nu _i\frac{\partial (P^-u)}{\partial x}\frac{\partial e_h}{\partial x}\mathrm{d}x}.
\end{equation}
Since $\sigma_i\geq0$, utilizing the periodic or compactly supported boundary conditions, we have that  
\begin{equation*}\label{accnew1}
\begin{aligned}
&\sum_i \left( B_i\left( e_h,e_h \right)+\sigma_i\int_{K_i}\nu_i\left(\frac{\partial e_h}{\partial x}\right)^2\mathrm dx \right) 
\\
&\ge \sum_i \left( \int_{K_i}{\frac{\partial e_h}{\partial t}e_h\mathrm{d}x}-\int_{K_i}{e_h\frac{\partial e_h}{\partial x}\mathrm{d}x}+\left( e_h|_{i+1/2}^{-} \right) ^2-\left( e_h|_{i-1/2}^{-} \right) \left( e_h|_{i-1/2}^{+} \right) \right) 
\\
&=\frac{1}{2}\frac{\mathrm{d}}{\mathrm{d}t}\int_{\Omega} {\left( e_h \right) ^2\mathrm{d}x} +\sum_{i} \left( -\frac{1}{2}\left( e_h|_{i+1/2}^{-} \right) ^2+\frac{1}{2}\left( e_h|_{i-1/2}^{+} \right) ^2+\left( e_h|_{i+1/2}^{-} \right) ^2 -\left( e_h|_{i-1/2}^{-} \right) \left( e_h|_{i-1/2}^{+} \right) \right) 
\\
&=\frac{1}{2}\frac{\mathrm{d}}{\mathrm{d}t}\int_{\Omega} {\left( e_h \right) ^2\mathrm{d}x} + \sum_i \left( \frac{1}{2}\left( e_h|_{i-1/2}^{-} \right) ^2 + \frac{1}{2}\left( e_h|_{i-1/2}^{+} \right) ^2 -\left( e_h|_{i-1/2}^{-} \right) \left( e_h|_{i-1/2}^{+} \right) \right)
\\
&=\frac{1}{2}\frac{\mathrm{d}}{\mathrm{d}t}\int_{\Omega} {\left( e_h \right) ^2\mathrm{d}x} + \sum_i \frac{1}{2}\left( e_h|_{i-1/2}^{-} - e_h|_{i-1/2}^{+} \right) ^2 
\\
&\ge \frac{1}{2}\frac{\mathrm{d}}{\mathrm{d}t}\int_{\Omega} {\left( e_h \right) ^2\mathrm{d}x} .
\end{aligned}
\end{equation*}
On the other hand, using the property of Gauss-Radau projection, we have that
\begin{align*}
B_i&\left( \varepsilon _h,e_h \right) 
\\&=\int_{K_i}{\frac{\partial \varepsilon _h}{\partial t}e_h\mathrm{d}x}-\int_{K_i}{\varepsilon _h\frac{\partial e_h}{\partial x}\mathrm{d}x}+\left( \varepsilon _h|_{i+1/2}^{-} \right) \left( e_h|_{i+1/2}^{-} \right) -\left( \varepsilon _h|_{i-1/2}^{-} \right) \left( e_h|_{i-1/2}^{+} \right) 
\\
&=\int_{K_i}{\frac{\partial \varepsilon _h}{\partial t}e_h\mathrm{d}x}.
\end{align*}
Therefore,
$$
\frac{1}{2}\frac{\mathrm{d}}{\mathrm{d}t}\int_{\Omega}{\left( e_h \right) ^2\mathrm{d}x}\le -\int_{\Omega}{\frac{\partial \varepsilon _h}{\partial t}e_h\mathrm{d}x}+\sum_i{\sigma _i\int_{K_i}{\nu _i\frac{\partial (P^-u)}{\partial x}\frac{\partial e_h}{\partial x}\mathrm{d}x}}.
$$
For the last term, we have
\begin{equation*}
    \label{esti}\begin{aligned}
\sum_i{\sigma _i\int_{K_i}{\nu _i\frac{\partial (P^-u)}{\partial x}\frac{\partial e_h}{\partial x}\mathrm{d}x}}
&\le \sum_i{\sigma _i}\left\| \frac{\partial (P^-u)}{\partial x} \right\| _{L^2\left( K_i \right)}\left\| \frac{\partial e_h}{\partial x} \right\| _{L^2\left( K_i \right)}
\\&
\lesssim \sum_i{\sigma _i} h^{-1/2}\left\| e_h \right\| _{L^2\left( K_i \right)}
\\&
\lesssim \left( \sum_i{\left( \sigma _i \right) ^2}h^{-1} \right) ^{1/2}\left\| e_h \right\| _{L^2\left( \Omega \right)}.\end{aligned}
\end{equation*}
In the second inequality, we have used the error estimate 
\begin{equation*}\label{estiGR}\begin{aligned}
\left\| \frac{\partial \left( P^-u \right)}{\partial x} \right\| _{L^2\left( K_i \right)}&=\left| P^-u \right|_{1,K_i}
\leq \left| u \right|_{1,K_i}+\left| u-P^-u \right|_{1,K_i}
\\
&\lesssim \left| u \right|_{1,K_i}+h^k\left\| u \right\| _{k+1, K_i }\lesssim h^{1/2}.
\end{aligned}
\end{equation*}
For $\sigma_i$, since $u$ is smooth, we have $\left|[\![\partial_x^lu_h]\!]\right| = \left|[\![\partial_x^l(u-u_h)]\!]\right|$. 
According to \eqref{sigma2}, the viscosity coefficient has an upper bound $\sigma_i\le C\sigma_i^{jump}$. 
Hence,
$$\begin{aligned}
\sum_i{\left( \sigma _i \right) ^2}h^{-1}
&\lesssim h\sum_i{\sum_{l=0}^k{\left\| [\![ \partial _{x}^{l}u_h ]\!] \right\|_{\partial K_i} ^2h^{2l}}}
\\
&=h\sum_i{\sum_{l=0}^k{\left\| [\![ \partial _{x}^{l}\left( u-u_h \right) ]\!] \right\|_{\partial K_i} ^2h^{2l}}}
\\
&\lesssim \sum_i \sum_{l=0}^k{\left\| \partial _{x}^{l}e_h \right\| _{L^2\left( K_i \right)}^{2}h^{2l}}
+\sum_i \sum_{l=0}^k{\left\| \partial _{x}^{l}\varepsilon _h \right\| _{L^2\left( K_i \right)}^{2}h^{2l}}  
\\
&\lesssim \left(\left\| e_h \right\| _{L^2\left( \Omega \right)}^{2}+ h^{2k+2}\right). 
\end{aligned}
$$
Then \eqref{acc2} yields
$$
\frac{1}{2}\frac{\mathrm{d}}{\mathrm{d}t}\int_{\Omega}{\left( e_h \right) ^2\mathrm{d}x}\lesssim \left( \int_{\Omega}{\left( e_h \right) ^2\mathrm{d}x}+h^{2k+2} \right) 
$$
Using the Gronwall's inequality, we can get that
$$
\left\| e_h\left( \cdot ,t\right) \right\|_{L^2(\Omega)} 
\lesssim \left\| e_h\left( \cdot ,0 \right) \right\|_{L^2(\Omega)} +h^{k+1}.
$$
Since $u_h(\cdot,0)$ is obtained by the standard $L^2$ projection of $u$, resulting $\left\|(u-u_h)(\cdot,0)\right\|\lesssim h^{k+1}$, thus $\left\|e_h(\cdot,0)\right\|\lesssim h^{k+1}$. Finally, we can obtain that
$$
 \left\| u(\cdot,t)-u_h(\cdot,t) \right\|_{L^2(\Omega)}\lesssim h^{k+1}.$$
\end{proof}

\begin{Remark}

For arbitrary entropy function $U(u)$, if we assume $\partial _x^lu_h\lesssim 1,\,l=0,1,\cdots,k$, 
and $A^{(k)}(u)$ is continuous with $u$, then the error estimate \eqref{esti0} still holds. We give a brief proof of this case under the assumptions. 
For a general $U(u)$, \eqref{error} is changed to
$$
B_i(u-u_h,w )=\sigma _i\int_{K_i}{\nu _i} \frac{\partial v_h}{\partial x}A\left( u_h \right) \frac{\partial w}{\partial x}\mathrm{d}x, \quad \forall w \in V_{h}^{k}, K_i\in \mathcal{K} ,
$$
deriving that
$$
B_i\left( e_h,e_h \right) =-B_i\left( \varepsilon _h,e_h \right) +\sigma _i\int_{K_i}{\nu _i\frac{\partial v_h}{\partial x}A\left( u_h \right) \frac{\partial e_h}{\partial x}\mathrm{d}x}.
$$
With the help of monotone numerical fluxes, we still have 
$\sum_i B_i\left( e_h,e_h \right) \ge \frac{1}{2}\frac{\mathrm{d}}{\mathrm{d}t}\int_{\Omega} {\left( e_h \right) ^2\mathrm{d}x}$.
Consequently, 
$$
\frac{1}{2}\frac{\mathrm{d}}{\mathrm{d}t}\int_{\Omega}{\left( e_h \right) ^2\mathrm{d}x}\le \int_{\Omega}{\frac{\partial \varepsilon _h}{\partial t}e_h\mathrm{d}x+}\sum_i{\sigma _i\int_{K_i}{\nu _i\frac{\partial v_h}{\partial x}A\left( u_h \right) \frac{\partial e_h}{\partial x}\mathrm{d}x}}.
$$
It can be calculated that
$$
\begin{aligned}
	\sum_i\sigma _i&\int_{K_i}{\nu _i\frac{\partial v_h}{\partial x}A\left( u_h \right) \frac{\partial e_h}{\partial x}\mathrm{d}x}\\
	&=\sum_i{\sigma _i\int_{K_i}{\nu _i\left[ \left( \frac{\partial v_h}{\partial x}A\left( u_h \right) -\frac{\partial u_h}{\partial x} \right) +\frac{\partial u_h}{\partial x} \right] \frac{\partial e_h}{\partial x}\mathrm{d}x}}\\
	&\le \sum_i{\sigma _i\left( \left\| \frac{\partial v_h}{\partial x}A\left( u_h \right) -\frac{\partial u_h}{\partial x} \right\| _{L^2\left( K_i \right)}+\left\| \frac{\partial u_h}{\partial x} \right\| _{L^2\left( K_i \right)} \right) \left\| \frac{\partial e_h}{\partial x} \right\| _{L^2\left( K_i \right)}}.
\end{aligned}
$$
According to the assumption and the definition that $v_h$ is the $(k+1)$-th interpolation of $v(u_h)$, we have
$$\begin{aligned}
\left\| \frac{\partial v_h}{\partial x}A\left( u_h \right) -\frac{\partial u_h}{\partial x} \right\| _{L^2\left( K_i \right)}&=\left\| \frac{\partial v_h}{\partial x}A\left( u_h \right) -\frac{\partial v\left( u_h \right)}{\partial x}A\left( u_h \right) \right\| _{L^2\left( K_i \right)}
\\
&\le \max_{x\in K_i} \left| A\left( u_h \right) \right| \, \left\| \frac{\partial v_h}{\partial x}-\frac{\partial v\left( u_h \right)}{\partial x} \right\| _{L^2\left( K_i \right)}
\\
&\lesssim \mathop {\max} \limits_{x\in K_i}\left| A\left( u_h \right) \right| \, \max_{x\in K_i} \left| \frac{\partial ^{k+1}v\left( u_h \right)}{\partial x^{k+1}} \right|h^{k+1/2}
\\
&\lesssim  h^{k+1/2}
\end{aligned}
$$
and
$$ \left\| \frac{\partial u_h}{\partial x} \right\|_{L^2(K_i)}\lesssim h^{1/2}. $$
Therefore, there still holds
$$
\sum_i{\sigma _i\int_{K_i}{\nu _i\frac{\partial v_h}{\partial x}A\left( u_h \right) \frac{\partial e_h}{\partial x}\mathrm{d}x}}\lesssim \left( \sum_i{\left( \sigma _i \right) ^2}h^{-1} \right) ^{1/2}\left\| e_h \right\| _{L^2\left( \Omega \right)}.
$$
The rest of the proof are similar.    
\end{Remark}

\subsection{1D system}

Next, we consider the one-dimensional system
\begin{equation}\label{sys1} 
\mathbf u_t +\mathbf f(\mathbf u)_x = \mathbf{0}. 
\end{equation}
where $\mathbf u:\mathbb R\to \mathbb R^p$ and $\mathbf f:\mathbb R^p \to\mathbb R^p$. 
For systems, we can also define the entropy pair $(U,F)$, which are still scalar functions with respect to $\mathbf u$ and satisfy $U'(\mathbf u)\mathbf f'(\mathbf u)=F'(\mathbf u)$. 
Moreover, denote
$$\mathbf v = \left(\dfrac{\partial U}{\partial \mathbf u}\right)^T\in\mathbb R^p,\quad 
\mathbf A(\mathbf u) = \left(\dfrac{\partial^2 U}{\partial \mathbf u^2}\right)^{-1}\in\mathbb R^{p\times p}, \quad
\mathbf B(\mathbf v) = \frac{\partial \mathbf{f}(\mathbf{u}(\mathbf{v}))}{\partial \mathbf{v}}\in\mathbb R^{p\times p}.$$
A strictly convex function $U$ serves as an entropy function if and only if $\mathbf A$ is a symmetric, positive-definite matrix and $\mathbf B$ is symmetric. Furthermore, as mentioned in \cite{chen2017entropy}, the change of variables $\mathbf{u} \mapsto \mathbf{v}$ should symmetrize the viscous term simultaneously such that the entropy conditions could meet the vanishing viscosity approach.

For example, we consider the compressible Euler equation
\begin{equation}\label{eq:1DEuler}
\left[ \begin{array}{c}
	\rho\\
	\rho u\\
	\mathcal{E}\\
\end{array} \right]_t +\left[ \begin{array}{c}
	\rho u\\
	\rho u^2+p\\
	u\left( \mathcal{E} +p \right)\\
\end{array} \right]_x = \mathbf{0}. 
\end{equation}
Here $\rho$, $u$ and $p$ are the density, velocity and pressure of the gas. $\mathcal{E}$ is the total energy, satisfying the equation of state for polytropic ideal gas 
$$\mathcal E = \frac{p}{\gamma - 1}+\frac{1}{2}\rho u^2,\quad \gamma = 1.4.$$
The physical specific entropy is $s = \ln(p\rho^{-\gamma})$. 
Harten \cite{HARTEN1983151} proved that there exists a family of entropy pairs that are related to $s$ and satisfy the symmetric conditions. However, if we also want to symmetrize the viscous term in the compressible Navier–Stokes equations with heat conduction \cite{HUGHES1986223}, there is only one choice of the entropy pair
$$ U = -\frac{\rho s}{\gamma - 1},\quad 
F = -\frac{\rho us}{\gamma - 1}.$$
Correspondingly,
$$ 
\mathbf{v}=\left[ \begin{array}{c}
	\dfrac{\gamma -s}{\gamma -1}-\dfrac{\rho u^2}{2p}\\
	\rho u/p\\
	-\rho /p\\
\end{array} \right],$$
$$\mathbf{A} =\left[ \begin{matrix}
	\rho&		m&		\displaystyle\frac{p}{\gamma -1}+\frac{m^2}{2\rho}\\
 \\
	m&		\displaystyle p+\frac{m^2}{\rho}&		\displaystyle\frac{m}{2\rho ^2}\left( m^2+\frac{2\gamma \rho p}{\gamma -1} \right)\\
 \\
	\displaystyle\frac{p}{\gamma -1}+\frac{m^2}{2\rho}\ \ &		\displaystyle\frac{m}{2\rho ^2}\left( m^2+\frac{2\gamma \rho p}{\gamma -1} \right)\ \ &		\displaystyle\frac{1}{4\rho ^3}\left( m^4+\frac{4\gamma m^2\rho p}{\gamma - 1}+\frac{4\gamma \rho^2 p^2}{(\gamma -1)^2} \right)\\
\end{matrix} \right],
$$
with $m=\rho u$.

To construct the NOES-DG scheme, the corresponding viscosity equation in cell $K_i$ is given as
\begin{equation}
\mathbf{u}_{t}+\mathbf{f}\left( \mathbf{u} \right) _x =\sigma_i\left( \nu _i\left( x \right) \mathbf A(\mathbf u)\mathbf{v}_x \right) _x,
\end{equation}
Here, $\sigma_i\geq0$ is scalar and $\nu_i(x)$ is defined the same as \eqref{eq:nu}.

Then, the NOES-DG scheme of the system \eqref{sys1} is: find $\mathbf u_h\in [V_h^k]^p$, such that for all $\mathbf w\in [V_h^k]^p$ and $K_i\in\mathcal K$,
\begin{equation}\label{scheme2}\begin{aligned}
\int_{K_i}{\frac{\partial \mathbf{u}_h}{\partial t}\cdot \mathbf{w}\mathrm{d}x}&=\int_{K_i}{\mathbf{f}\left( \mathbf{u}_h \right) \cdot \frac{\partial \mathbf{w}}{\partial x}\mathrm{d}x}-\mathbf{\hat{f}}_{i+1/2}\cdot\mathbf{w}_{i+1/2}^{-}+\mathbf{\hat{f}}_{i-1/2}\cdot\mathbf{w}_{i-1/2}^{+}
\\
&-\sigma_i\int_{K_i}{\nu _i\left( x \right) \left(  \frac{\partial \mathbf{v}_h}{\partial x} \right) ^T\mathbf{A}\left( \mathbf{u}_h \right) \frac{\partial \mathbf{w}}{\partial x}\mathrm{d}x}.
\end{aligned}
\end{equation}
Note that $\mathbf v_h$ must interpolate the value of $\mathbf v(\mathbf u_h)$ at the interfaces, i.e.
$$ \mathbf v_h|_{i+ 1/2}^{-} = \mathbf v(\mathbf u_h|_{i + 1/2}^-),\quad 
\mathbf v_h|_{i- 1/2}^{+} = \mathbf v(\mathbf u_h|_{i - 1/2}^+). $$
We do interpolation on each component based on the Gauss-Lobatto quadrature points again. 
The parameter $\sigma_{i}$ is determined by $\sigma _{i}=\max \left\{\sigma _{i}^{jump},  \sigma _{i}^{entropy}\right\}$ as well, where
\begin{equation*}
\begin{aligned}
\sigma_i^{jump}&=\max\limits_{1\le m\le p} \sigma_{i,m}^{jump},\\
\sigma_{i,m}^{jump}&=c_f\left( h\left\| [\![ u_{h,m} ]\!] \right\| _{\partial K_i}+\sum_{l=1}^k{l\left( l+1 \right) h^{l+1}\left\| [\![ \partial _{x}^{l}u_{h,m} ]\!] \right\| _{\partial K_i}} \right),
\\
\sigma _{i}^{entropy}&=\min \left\{ \max \left\{ \frac{F_i}{E_i},0 \right\} ,C\,\max_m \left\{ \sigma _{i,m}^{jump} \right\} \right\}, \\
E_i&=\int_{K_i}{\nu _i\left( x \right) \left( \frac{\partial \mathbf{v}_h}{\partial x} \right) ^T\mathbf{A}\left( \mathbf{u}_h \right) \frac{\partial \mathbf{v}_h}{\partial x}\mathrm{d}x},\\
F_i&=\hat{F}_{i+1/2}-\hat{F}_{i-1/2}-\mathbf{\hat{f}}_{i+1/2}^{C}\cdot \mathbf{v}_h|_{i+1/2}^{-}+\mathbf{\hat{f}}_{i-1/2}^{C}\cdot\mathbf{v}_h|_{i-1/2}^{+}+\int_{K_i}{\mathbf{f}\left( \mathbf{u}_h \right)\cdot\frac{\partial \mathbf{v}_h}{\partial x}\mathrm{d}x}.
\end{aligned}
\end{equation*}
Here, similar to the scalar case, $\sigma_i^{jump}$ is defined to measure discontinuity on each component without characteristic decomposition. For the Euler equations, we define the coefficient $c_f$ as $c_f = c_0\max\limits_{x\in K_i}\{ 1/H \}$ as in \cite{wei2024jump}, where $H = (\mathcal E+p)/\rho$ denotes the enthalpy, and $c_0$ is a constant. Note that $c_f$ can also be chosen as the scale-invariant version under the OEDG framework \cite{peng2024oedg}. Again, as $\sigma_{i}^{entropy}$ is not scale-invariant,  our scheme does not possess this property. 
The numerical entropy flux $\hat{F}$ is given as the central flux \eqref{fhat} as well. 
The numerical flux $\hat{\mathbf f}=\hat{\mathbf f}(\mathbf{u}_{L}, \mathbf{u}_{R})$ is split into the central part $\hat{\mathbf f}^C$ and the diffusive part $\hat{\mathbf f}^D$.
For example, for the Lax-Friedrichs flux,
$$
\mathbf{\hat{f}}^C=\frac{1}{2}\left( \mathbf{f}(\mathbf{u}_L) +\mathbf{f}(\mathbf{u}_R) \right) ,\quad 
\mathbf{\hat{f}}^D=-\frac{\alpha}{2}\left( \mathbf{u}_R-\mathbf{u}_L \right),\quad 
\alpha=\max\limits_{\mathbf u, i}\left|\lambda_i \left( \frac{\partial \mathbf f}{\partial \mathbf u} \right) \right|,
$$
where $\lambda_i$ is the $i$-th eigenvalue of the given matrix. 
For Euler equations, we can also take the numerical flux $\mathbf{\hat{f}}$ as HLL/HLLC flux \cite{toro2019hllc}. Correspondingly, for HLL flux, we can set
\begin{equation*}
   \mathbf{\hat{f}}^C=\begin{cases}
	\mathbf{f}(\mathbf{u}_L),&S_L\ge 0,\\
	\displaystyle\frac{S_L\mathbf{f}(\mathbf{u}_R)-S_R\mathbf{f}(\mathbf{u}_L)}{S_R-S_L},&S_L<0<S_R,\\
	\mathbf{f}(\mathbf{u}_R),& S_R\le 0,\\
\end{cases} \quad 
\mathbf{\hat{f}}^D=\begin{cases}
	0,&S_L\ge 0,\\
	\displaystyle\frac{S_RS_L\left( \mathbf{u}_R-\mathbf{u}_L \right)}{S_R-S_L}, & S_L<0<S_R,\\
	0, & S_R\le 0,\\
\end{cases}
\end{equation*}
where $S_R$ and $S_L$ are the local estimates of maximal and minimal wave speed at the interface, respectively. And for HLLC flux, we can set
\begin{equation*}
    \hat {\mathbf f}^C = \hat {\mathbf f} - \hat {\mathbf f}^D,\quad \mathbf{\hat{f}}^{D}=\begin{cases}
	0, & S_L\ge 0,\\
	\displaystyle\frac{S_*S_L\left( \mathbf{u}_R-\mathbf{u}_L \right)}{S_*-S_L}, &S_L<0\le S_*,\\
	\displaystyle \frac{S_RS_*\left( \mathbf{u}_R-\mathbf{u}_L \right)}{S_R-S_*}, &S_*<0<S_R,\\
	0, &S_R\le 0,\\
\end{cases} 
\end{equation*} 
where $S_*$ is an estimate of contact wave speed calculated by $\mathbf u_R,\mathbf u_L,S_R,S_L$. The specific formula of $\hat {\mathbf f}$ and $S_*$ can be found in \cite{toro2019hllc}.

Following the same approach as in proving Theorem \ref{esthm}, we can similarly establish that the scheme \eqref{scheme2} is entropy stable for the hyperbolic system \eqref{sys1} with periodic or compact boundaries.

\begin{Thm}
If 
\begin{equation}\label{sigma1sys}
 \sigma_i^{entropy} < C \max\limits_{m}\{\sigma_{i,m}^{jump}\},\quad \forall K_i\in\mathcal K,
\end{equation}
then the NOES-DG scheme \eqref{scheme2} is entropy stable in the sense of
\begin{equation}\label{es0sys}
\int_{\Omega}{\frac{\partial \mathbf u_h}{\partial t}\cdot \mathbf v_h\mathrm{d}x} \le 0.
\end{equation}
\end{Thm}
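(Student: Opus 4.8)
The plan is to mirror the proof of Theorem~\ref{esthm} almost verbatim, isolating the one place where the passage from scalars to systems demands a genuinely new argument. First I would take $\mathbf{w}=\mathbf{v}_h$ in the scheme \eqref{scheme2}. The hypothesis \eqref{sigma1sys} forces the inner minimum in the definition of $\sigma_i^{entropy}$ to be attained by its first argument: if the minimum equaled $C\max_m\{\sigma_{i,m}^{jump}\}$, then \eqref{sigma1sys} would read $C\max_m\{\sigma_{i,m}^{jump}\} < C\max_m\{\sigma_{i,m}^{jump}\}$, a contradiction, so $\sigma_i^{entropy}=\max\{F_i/E_i,0\}$. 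Since $\sigma_i=\max\{\sigma_i^{jump},\sigma_i^{entropy}\}\ge\sigma_i^{entropy}$ and $E_i\ge 0$ (because $\mathbf{A}(\mathbf{u}_h)$ is symmetric positive definite and $\nu_i\ge0$, so the integrand defining $E_i$ is nonnegative), this yields the key inequality $\sigma_i E_i\ge F_i$, exactly as in the scalar case.

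Next I would use the definition of $F_i$ to eliminate the volume term $\int_{K_i}\mathbf{f}(\mathbf{u}_h)\cdot\partial_x\mathbf{v}_h\,\mathrm{d}x$ from the right-hand side. With the splitting $\hat{\mathbf{f}}=\hat{\mathbf{f}}^C+\hat{\mathbf{f}}^D$, the central interface contributions of the scheme combine with those carried inside $F_i$ to produce purely diffusive interface terms, leaving on each cell
\begin{equation*}
\int_{K_i}\frac{\partial \mathbf{u}_h}{\partial t}\cdot \mathbf{v}_h\,\mathrm{d}x = F_i - \sigma_i E_i - \hat{F}_{i+1/2} + \hat{F}_{i-1/2} - \hat{\mathbf{f}}_{i+1/2}^D\cdot \mathbf{v}_h|_{i+1/2}^- + \hat{\mathbf{f}}_{i-1/2}^D\cdot \mathbf{v}_h|_{i-1/2}^+.
\end{equation*}
Since $F_i-\sigma_i E_i\le 0$, the cell contribution is bounded above by the flux terms. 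Summing over all cells and invoking the periodic or compactly supported boundary conditions, the consistent entropy fluxes $\hat{F}_{i\pm1/2}$ telescope to zero, and regrouping the diffusive terms at each interface gives
\begin{equation*}
\int_{\Omega}\frac{\partial \mathbf{u}_h}{\partial t}\cdot \mathbf{v}_h\,\mathrm{d}x \le -\sum_i \hat{\mathbf{f}}_{i+1/2}^D\cdot\left(\mathbf{v}_h|_{i+1/2}^- - \mathbf{v}_h|_{i+1/2}^+\right).
\end{equation*}

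The step I expect to require the most care is showing that this remaining interface sum is nonpositive, since this is where the scalar mean value theorem argument of \eqref{es2} no longer applies. Here the interface interpolation condition $\mathbf{v}_h|_{i+1/2}^\mp=\mathbf{v}(\mathbf{u}_h|_{i+1/2}^\mp)$ is indispensable, exactly as Remark~\ref{rmk1} warns. For the Lax-Friedrichs flux $\hat{\mathbf{f}}_{i+1/2}^D=-\frac{\alpha}{2}(\mathbf{u}^+-\mathbf{u}^-)$ with $\mathbf{u}^\pm=\mathbf{u}_h|_{i+1/2}^\pm$, so each summand equals $-\frac{\alpha}{2}(\mathbf{u}^+-\mathbf{u}^-)\cdot(\mathbf{v}(\mathbf{u}^+)-\mathbf{v}(\mathbf{u}^-))$. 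Whereas the scalar case produced a factor $U''(\xi)\ge0$, for systems I would instead invoke the strict convexity of $U$: since $\mathbf{v}=(\partial U/\partial\mathbf{u})^T$, the fundamental theorem of calculus gives $(\mathbf{u}^+-\mathbf{u}^-)\cdot(\mathbf{v}(\mathbf{u}^+)-\mathbf{v}(\mathbf{u}^-))=\int_0^1(\mathbf{u}^+-\mathbf{u}^-)^T\frac{\partial^2 U}{\partial\mathbf{u}^2}(\mathbf{u}^+-\mathbf{u}^-)\,\mathrm{d}t\ge0$, where the Hessian is evaluated along the segment joining $\mathbf{u}^-$ to $\mathbf{u}^+$ and is positive definite. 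Hence every summand is nonpositive and \eqref{es0sys} follows. For the HLL/HLLC fluxes the same conclusion holds because their diffusive parts have the form $\lambda(\mathbf{u}_R-\mathbf{u}_L)$ with a nonpositive coefficient $\lambda$ (e.g.\ $\lambda=S_RS_L/(S_R-S_L)\le0$ when $S_L<0<S_R$), so the monotonicity of the map $\mathbf{u}\mapsto\mathbf{v}$ again fixes the correct sign.
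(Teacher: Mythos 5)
Your proposal is correct and follows essentially the same route as the paper: take $\mathbf{w}=\mathbf{v}_h$, reduce the estimate to the diffusive interface terms (using $\sigma_i E_i \ge F_i$ and telescoping of $\hat F$), write $\hat{\mathbf{f}}^D_{i+1/2}$ as a nonpositive multiple of $\mathbf{u}_h|^+_{i+1/2}-\mathbf{u}_h|^-_{i+1/2}$ for the LF/HLL/HLLC fluxes, invoke the interpolation condition $\mathbf{v}_h|^{\pm}=\mathbf{v}(\mathbf{u}_h|^{\pm})$, and conclude by integrating along the segment joining the two states. The only cosmetic difference is that you correctly place the Hessian $\partial^2 U/\partial\mathbf{u}^2$ in the line integral, whereas the paper writes $\mathbf{A}(\mathbf{u}_\theta)$ (the inverse Hessian) there --- an immaterial slip since both matrices are symmetric positive definite.
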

\begin{proof}
For Lax-Friedrichs flux, HLL flux, or HLLC flux, we can write
$$ \hat{\mathbf f}^D_{i+1/2}=-S_{i+1/2}(\mathbf u_h|_{i+1/2}^+-\mathbf u_h|_{i+1/2}^-),\quad S_{i+1/2}\ge 0. $$
Hence, taking $\mathbf w=\mathbf v_h$ in \eqref{scheme2} and summing of $i$, we have
\begin{align*}
\int_{\Omega}{\frac{\partial \mathbf{u}_h}{\partial t}\cdot \mathbf{v}_h\mathrm{d}x}\le& -\sum_i{\left( \hat{\mathbf{f}}_{i+1/2}^{D}\cdot \mathbf{v}_h|_{i+1/2}^{-}-\hat{\mathbf{f}}_{i-1/2}^{D}\cdot \mathbf{v}_h|_{i-1/2}^{+} \right)}
\\
=&-\sum_i{S_{i+1/2}\left( \mathbf{u}_h|_{i+1/2}^{+}-\mathbf{u}_h|_{i+1/2}^{-} \right) \cdot \left( \mathbf{v}_h|_{i+1/2}^{+}-\mathbf{v}_h|_{i+1/2}^{-} \right)}
\\
=&-\sum_i{S_{i+1/2}\left( \mathbf{u}_h|_{i+1/2}^{+}-\mathbf{u}_h|_{i+1/2}^{-} \right) \cdot \left( \mathbf{v}\left( \mathbf{u}_h|_{i+1/2}^{+} \right) -\mathbf{v}\left( \mathbf{u}_h|_{i+1/2}^{-} \right) \right)}
\\
=&-\sum_i{S_{i+1/2}\int_0^1{\left( \mathbf{u}_h|_{i+1/2}^{+}-\mathbf{u}_h|_{i+1/2}^{-} \right) ^T}}\mathbf{A}\left( \mathbf{u}_{\theta} \right) \left( \mathbf{u}_h|_{i+1/2}^{+}-\mathbf{u}_h|_{i+1/2}^{-} \right) \mathrm{d}\theta 
\\ \le&\  0.
\end{align*}
Here, $\mathbf u_\theta=\mathbf u_h|_{i+1/2}^-+\theta(\mathbf u_h|_{i+1/2}^+-\mathbf u_h|_{i+1/2}^-)$.

\end{proof}

\subsection{Two-dimensional system}

In this section, we consider a two-dimensional system
\begin{equation}\label{hcl2}
\begin{cases}
    \mathbf u_t+\nabla \cdot \mathbf f\left( \mathbf u \right) = \mathbf{0},
    \quad \mathbf x\in \Omega\subset\mathbb R^2,\\
    \mathbf u\left( \mathbf x,0 \right) =\mathbf u_0\left( \mathbf x \right),\\
\end{cases}
\end{equation}
with
$$\mathbf u:\mathbb R^2\to\mathbb R^p,\quad \mathbf f(\mathbf u)=(\mathbf f_1(\mathbf u),\mathbf f_2(\mathbf u)):\mathbb R^p\to \mathbb R^{p\times 2}.$$
Here, we consider the domain is a rectangular $\Omega=[a_x,b_x]\times [a_y, b_y]$ with either periodic or compact support in each direction. 
The entropy pair $(U,\bm{\mathcal F})$, where $\bm{\mathcal F} = (F_1,F_2)$, should also satisfying the symmetric condition. 

Assume the computational domain $\Omega$ is divided into a Cartesian mesh $\mathcal K=\{K_{ij}: K_{ij}=[x_{i-1/2}, x_{i+1/2}]\times [y_{j-1/2}, y_{j+1/2}]\}$. 
And the finite element space is defined as piecewise $Q^k$ polynomial
$$
V_{h}^{k}=\left\{ w\left( x,y \right) :w\left( x,y \right) |_{K_{ij}} \in Q^k(K),\forall K_{ij}\in \mathcal{K} \right\}.
$$
Then, the one-dimensional framework could be directly applied to rectangular meshes through tensor product, with appropriate adjustment on the additional local viscosity in cell $K$.

In particular, the local viscosity form of \eqref{hcl2} on a cell $K\in\mathcal K$ is defined as
\begin{equation}
\mathbf u_t+\nabla \cdot \mathbf f\left( \mathbf u \right) =\sigma_K\nabla\cdot\left( \nu _K\mathbf A(\mathbf u) \nabla\mathbf v(\mathbf u) \right).
\end{equation}
We take 
$$
\nu _K\left( x,y \right) =\left(1 - X^2\left( x,y \right) \right) \left( 1-Y^2\left( x,y \right) \right), 
$$
to satisfy the condition
$$
\nu _K\left( \mathbf x \right) =0,\,\, \mathbf x \in \partial K,\quad \text{and} \quad 
\nu _K\left( \mathbf x \right) >0,\,\, \mathbf x \in K\backslash \partial K.
$$
Here, $(x,y)\to (X,Y)$ is the map from physical cell $K$ to the reference cell $K_0=\{(X,Y):-1<X,Y<1\}.$

The NOES-DG scheme of \eqref{hcl2} reads: Find $\mathbf u_h\in [V_h^k]^p$, such that for all $\mathbf w\in [V_h^k]^p$ and  $K\in\mathcal K$,
\begin{equation}\label{scheme3}
\begin{aligned}
\int_K{\mathbf u_h \cdot \mathbf w\, \mathrm{d}K}
-\int_K{\mathbf f\left( \mathbf u_h \right) : \nabla \mathbf w\, \mathrm{d}K}
&+\int_{\partial K}{\hat{\mathbf f}\left( \mathbf u_h^{int},\mathbf u_h^{ext},\mathbf{n} \right)\cdot \mathbf w\, \mathrm{d}S}
\\&=-\sigma_K\int_K{\nu _K(\mathbf A\left( \mathbf u_h \right)\nabla \mathbf v_h): \nabla \mathbf w\, \mathrm{d}K}.\end{aligned}
\end{equation}
Here, $\hat {\mathbf f}$ is the numerical flux consistent with $\mathbf f(\mathbf u)\cdot\mathbf n$, and $\mathbf n$ is the unit outer normal vector of $K$. We can take $\mathbf v_h$ as the two-dimensional Gauss-Lobatto interpolation of $\mathbf v(\mathbf u_h)$ as well. Moreover, the parameter is given by $\sigma_{K}=\max\{\sigma_{K}^{jump},\sigma_K^{entropy}\}$,
\begin{equation}
\begin{aligned}
\sigma_K^{jump}&=\max\limits_{1\le m\le p}\sigma_{K,m}^{jump},
\\\sigma _{K,m}^{jump}&=c_f\left( h_K\left\| [\![ u_{h,m} ]\!] \right\| _{\partial K}+\sum_{\left|l\right|=1}^k{\left|l\right|\left( \left|l\right|+1 \right) h_K ^{\left|l\right|+1}\left\| [\![ \partial^{l}u_{h,m} ]\!] \right\| _{\partial K}} \right),\ l\in\mathbb N^d,\\
\sigma _{K}^{entropy}& =\min \left\{ \max \left\{ \frac{F_K}{E_K},0 \right\} ,C \max_m \left\{ \sigma _{K,m}^{jump} \right\} \right\}, \\
E_K&=\int_K{\nu _K\left( \mathbf{A}\left( \mathbf{u}_h \right) \nabla \mathbf{v}_h \right) :\nabla \mathbf{v}_h\mathrm{d}K}
\\
F_K&=\int_{\partial K}{\hat{\boldsymbol{\mathcal F}}\cdot \mathbf{n}\,\mathrm{d}S}-\int_{\partial K}{\left( \mathbf{\hat{f}}^C\cdot \mathbf{n} \right) \cdot \mathbf{v}_{h}^{int}\mathrm{d}S}+\int_K{\mathbf{f}\left( \mathbf{u}_h \right) :\nabla \mathbf{v}_h\mathrm{d}K},
\end{aligned}
\end{equation}
where $\hat{\boldsymbol{\mathcal F}}=(\hat F_1,\hat F_2)^T$.

In this paper, we consider the two-dimensional Euler equation, where
$$
\mathbf{u}=\left[ \begin{array}{c}
	\rho\\
	\rho u\\
    \rho v\\
	\mathcal{E}\\
\end{array} \right] ,\quad \mathbf{f}_1\left( \mathbf{u} \right) =\left[ \begin{array}{c}
	\rho u\\
	\rho u^2+p\\
    \rho uv\\
	u\left( \mathcal{E} +p \right)\\
\end{array} \right],\quad \mathbf f_2(\mathbf u) = \left[ \begin{array}{c}
	\rho v\\
	\rho uv\\
    \rho v^2 + p\\
	v\left( \mathcal{E} +p \right)\end{array} \right]
$$
with $$\mathcal E = \frac{p}{\gamma - 1}+\frac{1}{2}\rho (u^2 + v^2),\quad \gamma = 1.4.$$
And the entropy pair is
$$ U = -\frac{\rho s}{\gamma - 1},\quad F_1 = -\frac{\rho us}{\gamma - 1},\quad F_2 = -\frac{\rho vs}{\gamma - 1}.$$
Further,
$$\mathbf{v}=\left[ \begin{array}{c}
	\dfrac{\gamma -s}{\gamma -1}-\dfrac{\rho (u^2 + v^2)}{2p}\\
	\rho u/p\\
    \rho v/p\\
	-\rho /p\\
\end{array} \right],
$$
and
$$
\mathbf{A} =\left[ \begin{matrix}
	\rho&		m_1& m_2&		\displaystyle\frac{p}{\gamma -1}+\frac{M}{2\rho}\\
 \\
	m_1&		\displaystyle p+\frac{m_1^2}{\rho}&	\displaystyle\frac{m_1m_2}{\rho}&	\displaystyle\frac{m_1}{2\rho ^2}\left( M+\frac{2\gamma \rho p}{\gamma -1} \right)\\
 \\
	m_2&	\displaystyle\frac{m_1m_2}{\rho}&	\displaystyle p+\frac{m_2^2}{\rho}&		\displaystyle\frac{m_2}{2\rho ^2}\left( M+\frac{2\gamma \rho p}{\gamma -1} \right)\\
 \\
	\displaystyle\frac{p}{\gamma -1}+\frac{M}{2\rho}\ \ &		\displaystyle\frac{m_1}{2\rho ^2}\left( M+\frac{2\gamma \rho p}{\gamma -1} \right)\ \ &		\displaystyle\frac{m_2}{2\rho ^2}\left( M+\frac{2\gamma \rho p}{\gamma -1} \right)\ \ & A_{44}\\
\end{matrix} \right],
$$
with
$$ m_1 = \rho u,\quad m_2 = \rho v,\quad M=m_1^2 + m_2^2,\quad A_{44} = \frac{1}{4\rho ^3}\left( M^2+\frac{4\gamma M\rho p}{\gamma -1}+\frac{4\gamma \rho^2p^2}{(\gamma - 1)^2} \right).  $$
The flux splitting formula is given along each direction, which is the same as that in 1D.

\begin{Remark}
For triangular meshes, it is usually to use $P^k$ basis in numerical implementation. In this case we can also define $\mathbf v_h$ interpolating $\mathbf v(\mathbf u_h)$ on Gauss-Lobatto points along the edges. In general, for a $P^k$ polynomial we have $(k+1)(k+2)/2$ degrees of freedom, and the number of conditions of interpolation is $3k$.  
Therefore, the interpolation exists for all $k>0$ and is unique for $k = 1,2$. When $k>2$, there are some extra degrees of freedom to be determined, we may let $v_h$ interpolate some other points such as \cite{hesthaven1998electrostatics}. The method can also extend to other types of meshes, and it will be left to future work. 
\end{Remark}

\section{Fully discretization}

\subsection{Integral factor SSP-RK method}

The PDE can be solved in the MOL framework, that is after DG discretization the semi-discrete scheme can be treated as an ODE system, and then, we can apply a known ODE solver on it to obtain the fully discrete scheme.
However, for numerical solutions containing discontinuities, the additional viscous term becomes stiff, resulting in a severely restricted time step when using explicit time discretization.
To overcome this difficulty, we will employ the integrating factor strong-stability-preserving Runge-Kutta (IF-SSP-RK) method introduced in \cite{chen2021krylov}. 
Here, we only discuss the one-dimensional scalar case for brevity. 

Suppose we take the Legendre polynomials $\{\phi_{i,l}\}$ as the basis functions of $V^k_h$ in $K_i$, where $\phi_{i,l}$ is polynomials of degree $l$. 
Then, the numerical solution can be presented as
$$
u_h(x,t)|_{K_i}=\sum_{l=0}^k{u_{i}^{l}(t) \, \phi_{i,l}(x)}.
$$
We align the coefficients as a vector
$$
\mathbf{u}_h=\left( u_{1}^{0},\cdots ,u_{1}^{k},u_{2}^{0},\cdots ,u_{2}^{k},\cdots ,
u_{N}^{0},\cdots ,u_{N}^{k} \right) ^T\in\mathbb R^{(k+1)N}.
$$
After the DG spatial discretization, we obtain the following ODE system
\begin{equation}\label{IIF1}
\frac{\mathrm{d}}{\mathrm{d}t}\mathbf{u}_h=\mathcal{L} _h\left( \mathbf{u}_h \right) +\mathcal{L} _{h}^{v}\left( \mathbf{u}_h \right), 
\end{equation}
where $\mathcal L_h^v(\mathbf u_h)$ corresponding to the artificial viscosity, which may be a stiff term. 

To introduce the IF-SSP-RK method, we begin with a specific case that this stiff term is semilinear, i.e. $\mathcal L^v_h(\mathbf u_h) = \mathbf C \mathbf u_h$, where $\mathbf C$ is a constant matrix.
Then we can multiply the factor $e^{-\mathbf Ct}$ on both sides and get that
$$
\frac{\mathrm{d}}{\mathrm{d}t}\left(e^{-\mathbf Ct} \mathbf{u}_h\right) =e^{-\mathbf Ct}\mathcal{L} _h\left( \mathbf{u}_h \right). 
$$
Then the IF-SSP-RK schemes can be obtained via applying RK scheme on the above ODE. 
In particular, the third-order IF-SSP-RK method \cite{chen2021krylov} is given as following
\begin{equation}\label{IFSSP}  
\begin{aligned}
\mathbf{u}^{\,\left( 1 \right)}_h&=\frac{1}{2}e^{\frac{2}{3}\mathbf C \Delta t}\left( 2\mathbf{u}^{\,n}_h+\frac{4}{3}\Delta t \mathcal{L}_h\left( \mathbf{u}_{h}^{n} \right) \right), 
\\
\mathbf{u}_h^{\,\left( 2 \right)}&=\frac{2}{3}e^{\frac{2}{3} \mathbf C \Delta t}\mathbf{u}_{h}^{\,n}+\frac{1}{3}\left( \mathbf{u}_h^{\,\left( 1 \right)}+\frac{4}{3}\Delta t \mathcal{L}_h\left( \mathbf{u}_h^{\,\left( 1 \right)} \right) \right), 
\\
\mathbf{u}_{h}^{\,n+1}&=\frac{59}{129}e^{\mathbf C\Delta t} \mathbf{u}_{h}^{\,n} 
+\frac{15}{129}e^{\mathbf C\Delta t}\left( \mathbf{u}_h^{\,n} +\frac{4}{3}\Delta t \mathcal{L}_h\left( \mathbf{u}_{h}^{\,n} \right) \right) 
+\frac{27}{64}\left( \mathbf{u}_h^{\,\left( 2 \right)}+\frac{4}{3}\Delta t \mathcal{L}_h \left( \mathbf{u}_h^{\,\left( 2 \right)} \right) \right). 
\end{aligned}
\end{equation}
\noindent
If the stiff term is nonlinear, we can use $\mathbf C_n \mathbf u_h$ to approximate $\mathcal L_h^v(\mathbf u_h)$, where $\mathbf C_n$ is a constant matrix. Then, rewrite the ODE system \eqref{IIF1} as
\begin{equation}\label{IIF2}
\frac{\mathrm{d}}{\mathrm{d}t}\mathbf{u}_h=\mathcal{L} _h\left( \mathbf{u}_h \right) +\mathcal{L} _{h}^{v}\left( \mathbf{u}_h \right) - \mathbf C_n\mathbf u_h + \mathbf C_n \mathbf u_h.
\end{equation}
Denote $\tilde{\mathcal L}_h(\mathbf u_h):=\mathcal L_h(\mathbf u_h)+\mathcal L_h^v(\mathbf u_h)- \mathbf C_n\mathbf u_h$. Then we can get the fully-discrete scheme via the same processing replacing ${\mathcal L}_h(\mathbf u_h)$ and $\mathbf C$ in \eqref{IFSSP} by $\tilde{\mathcal L}_h(\mathbf u_h)$ and $\mathbf C_n$ respectively.

Notice that in our scheme, the viscosity term $\displaystyle\int_{K_i} \nu_i(x)(v_h)_x A(u_h) w_x\mathrm dx$ is a nonlinear local form. 
To choose a simple form of $\mathbf C_n$, we first consider a special case with square entropy $U(u)=u^2/2$, and denote the corresponding matrix $\mathbf C_n$ as $\mathbf C_n^{square}$. 
In this case $A(u_h)(v_h)_x= (u_h)_x$ and the matrix $\mathbf C_n^{square}$ can be given exactly satisfying $\mathbf C_n^{square}\mathbf u_h=\mathcal L^v_h(\mathbf u_h)$. To be specific, according to the local property, $\mathbf C_n^{square}$ is a blocked diagonal matrix.
Additionally, with the function $\nu_i(x) = 1 - X_i^2(x)$, the basis function $\phi_{i,l}$ satisfies a Sturm-Liouville equation 
$$\frac{h^2}{4}\frac{\mathrm d}{\mathrm dx}\left(\nu_i\frac{\mathrm d}{\mathrm dx}\phi_{i,l}\right) + l(l+1)\phi_{i,l}=0,$$ 
leading to
$$
\int_{K_i}{\nu _i\left( \phi _{i,l} \right)_{x}(\phi_{i,q})_x\mathrm{d}x}=\delta_{lq}\frac{2}{h}l\left( l+1 \right) m_l,\quad m_l=
\frac{2}{2l+1}\left( \frac{2^l\left( l! \right) ^2}{\left( 2l \right) !} \right) ^2.
$$
Therefore, we can derive that $\mathbf C_n^{square}$ is a diagonal matrix
\begin{equation}\label{eq:C}
\mathbf C_n^{square}=\frac{1}{h^2}\mathrm{diag}\left\{ \sigma _1,\sigma _2,\cdots ,\sigma _N \right\} \otimes \mathrm{diag}\left\{ 0,4,\cdots ,2k\left( k+1 \right) \right\},
\end{equation}
where $\otimes$ is the Kronecker product $\mathbf A\otimes \mathbf B = (a_{ij}\mathbf B)$. 
For a general entropy function $U$, since $ A(u_h)(v_h)_x\approx (u_h)_x $, we can also set $\mathbf C_n=\mathbf C_n^{square}$ \eqref{eq:C} in \eqref{IIF2}. 
As a consequence, the matrix $e^{\mathbf C_n\Delta t}$ can be obtained directly, making the algorithm easy to implement.

\subsection{Positivity-preserving limiter}

For certain extreme cases, negative density or pressure may appear in the numerical solution produced by the proposed method.  In such cases, the system \eqref{sys1} becomes non-hyperbolic, rendering the initial value problem ill-posed. 
To overcome the difficulty, we propose to utilize the PP limiter \cite{zhang2010positivity}. Denote $\bar{\mathbf{u}}_{i}^{n}$ as the cell average of the numerical solution $u_h$ in the cell $K_i$ at time $t_n$:
$$\bar{\mathbf{u}}_{i}^{n} = \frac{1}{h} \int_{K_i} \mathbf u_h(x,t^n) \mathrm dx.$$
Note that the cell averages satisfy the same equation as in the standard RKDG formulation since the elements in the corresponding positions of $\mathbf C$ are all zero.  Therefore, although the time discretization used in this work differs from the standard SSP-RK method employed in the original work \cite{zhang2010positivity}, both approaches are essentially equivalent when applying the PP limiter. 

First, we briefly review  the PP limiter developed in \cite{zhang2010positivity} for the one-dimensional Euler equations \eqref{sys1} based on the standard SSP-RKDG method. 
Let $\mathcal G\subset \mathbb R^3$ denote the admissible set
$$ \mathcal G:= \left\{(\rho,m,\mathcal E): \rho>0\ \ \mathrm{and}\ \ p=(\gamma - 1)\left(\mathcal E-\frac{1}{2}\frac{m^2}{\rho}\right)>0\right\},$$
which is convex. 
It was proved that the first order DG scheme ($k=0$) coupled with the forward Euler method is PP under a certain CFL condition $\frac{\Delta t}{h} \left\| (\left|u\right|+c)\right\|_{L^\infty(\Omega)}\le \alpha_0$. That is if $\bar{\mathbf{u}}_{i}^{n}\in \mathcal{G}$, then $\bar{\mathbf{u}}_{i}^{n+1}\in \mathcal{G}$. 
Moreover, \cite{zhang2010positivity} shows that a high order DG scheme,  when coupled with the forward Euler method,  can ensure $\bar{\mathbf{u}}_{i}^{n+1}\in \mathcal{G}$ under a suitable CFL condition, provided that the nodal values of the approximate polynomials at the Gauss-Lobatto quadrature points remain within $\mathcal{G}$. 
Building on this, authors designed a simple scaling-based PP limiter that preserves the original order of accuracy. 
Since an SSP-RK method is a convex combination of forward Euler steps, the PP property still holds for an SSP-RKDG method as long as the limiter is applied at each stage of the SSP-RK method.

In the following, we will apply the PP-limiter on the NOES-DG scheme \eqref{scheme2}. Let $\{(\hat{x}_i^q, \hat\omega_i^q)\}_{q=1}^{k+1}$ be the Gauss-Lobatto points and quadrature weights on a interval $K_i$. 
Note that the cell average of the numerical solution by the  NOES-DG scheme coupled with the IF forward Euler method satisfies the following formulation   
\begin{equation*}\label{eq:NOES_Euler}
\begin{aligned}
\bar{\mathbf{u}}_i^{n+1}=\bar{\mathbf{u}}_i^{n}- \frac{\Delta t}{h} \cdot \left(
\mathbf{\hat{f}}_{i+1/2}^{n}-\mathbf{\hat{f}}_{i-1/2}^{n}\right)
:= \bar{\mathbf{u}}_i^{n} + \Delta t \mathcal{L}_{i}^{ave}(\mathbf{u}_h^n),
\end{aligned}
\end{equation*}
which is the same as that of the traditional DG scheme coupled with the forward Euler method. 
Hence, based on such equivalence, we can directly employ the PP-limiter in \cite{zhang2010positivity} for our NOES-DG scheme.  
 In particular, the DG solution is rescaled  using  two scaling parameters $\theta_i^{(1)}, \theta_i^{(2)} \in[0,1]$ on each cell $K_i$:
\begin{equation}\label{pp}
\begin{aligned}
\tilde{\rho}^{n}_h|_{K_i} &= \left(1-\theta_i^{(1)}\right){\bar\rho}_i^n + \theta_i^{(1)} \rho^n_h|_{K_i},
\\ 
\left. \left( \begin{array}{c}
	\rho_h\\
	m_h\\
	\mathcal{E}_h\\
\end{array} \right) ^{n,\left( mod \right)} \right|_{K_i}&=\left( 1-\theta _{i}^{(2)} \right) \left( \begin{array}{c}
	\bar{\rho}_{i}^{n}\\
	\bar{m}_{i}^{n}\\
	\bar{\mathcal{E}}_{i}^{n}\\
\end{array} \right) +\theta _{i}^{(2)}\left. \left( \begin{array}{c}
	\tilde{\rho}_h\\
	m_h\\
	\mathcal{E}_h\\
\end{array} \right) ^n \right|_{K_i} 
\end{aligned}
\end{equation}
to guarantee $\tilde{\rho}_h^n(\hat{x}^q_i)\geq0$ and $\mathbf{u}_h^{n,(mod)}(\hat{x}^q_i) \in \mathcal{G}$. 
The values of $\theta_i^{(1)}$ and $\theta_i^{(2)}$ are given by
\begin{equation} 
\theta_i^{(1)} = \min\left\{ \frac{\bar\rho^n_i-\varepsilon}{\bar\rho^n_i-\rho_{m}},1 \right\},\qquad 
\rho_m=\min\limits_{1\le q\le k+1}\rho^n_h(\hat{x}_i^q), 
\end{equation}
\begin{equation}
\theta_i^{(2)}=\min\limits_{1\le q\le k+1}\{t_q\},\qquad t_q=\begin{cases}
	1,\quad p\left( \tilde{\mathbf{u}}_h\left( \hat{x}_{i}^{q} \right) \right) \ge \varepsilon,\\
	\tilde{t}_q, \quad p\left( \tilde{\mathbf{u}}_h\left( \hat{x}_{i}^{q} \right) \right) <\varepsilon,\\
\end{cases}
\end{equation}
where $\tilde{\mathbf u}_h=(\tilde\rho_h^n,m_h^n,\mathcal E_h^n)$, and $\tilde t_q$ is solved from the following nonlinear equation
$$ p((1-\tilde t_q)\bar{\mathbf u}_i + \tilde t_q\tilde{\mathbf u}_h(\hat{x}_i^q)) = \varepsilon,$$
where the parameter $\varepsilon$ is a small number, and we set $\varepsilon=10^{-13}$ as suggested in \cite{zhang2010positivity}. 
Finally, we note that the scaling PP-limiter \eqref{pp} maintains the cell averages. Moreover, as shown in \cite{zhang2010positivity}, it preserves the original high order of accuracy for smooth solutions.

\begin{Remark}
For the two-dimensional case, the quadrature points for the PP limiter on a rectangular cell are defined as $\mathbb S_{2D}=(\mathbb S_i^x\times \hat{\mathbb S}_j^y) \bigcup (\hat{\mathbb S}_i^x\times \mathbb S_j^y)$, where $\mathbb S_i^x$ and $\hat{\mathbb S}_i^x$ denote the Gauss quadrature points and Gauss-Lobatto quadrature points on $[x_{i-1/2},x_{i+1/2}]$, respectively. The scaling parameters $\theta^{(1)}_{ij}, \theta^{(2)}_{ij}$ are calculated at these $2(k+1)^2$ points, see \cite{zhang2010positivity}. For triangular meshes, the main difference lies in the  quadrature rule, and the details can be found in \cite{zhang2012maximum}.
\end{Remark}

\section{Numerical tests}

In this section, we present the results of our numerical experiments for the schemes described in the previous sections.
In particular, for smooth problems, we do accuracy test with $k = 1,2,3$. And for those non-smooth problems, without loss of generality, we only show results with $k = 2$. The domain is divided into uniform meshes. 
The third-order IF-SSP-RK method is used for time discretization, with time step
$$ \Delta t =  \frac{\mathrm{CFL}}{\max\{\alpha_x,\alpha_y\}} h,$$
and $\mathrm{CFL}$ is taken as $0.2$. 
Unless otherwise noted, we take the parameter $C = 50$ for all tests. 
 Moreover, $c_0$ is a free parameter, and the optimal value is problem-dependent. Our selection principle is to choose a small value while preventing spurious oscillations. Based on our experience, a suitable range is  $0.1 < c_0 < 10$. Further investigation into these parameters, including the scale-invariant properties, will be considered in future work.
Time evolution of the total entropy will be given for some problems, which is defined as $\int_{\Omega} U(u_h) \mathrm dx$.

\subsection{One-dimensional tests}

\begin{Ex}
\textbf{(Linear equation.)}
\label{ex:linear}
\end{Ex}

We first consider the linear equation
$$ u_t+u_x=0, \quad x\in\Omega=[0,2\pi], $$
with periodic boundary conditions. The exact solution is $u(x,t) = u(x-t,0)$.
In this example, we choose the entropy function as $U(u) = e^u$. 

First, we test the accuracy of the scheme with the smooth initial data $u(x,0) = \sin(x)$. In Table \ref{tab1}, we present the errors and orders of accuracy at $T=2\pi$. We can observe the optimal convergence rates for $k=1, 2, 3$. 
For this test problem, we also provide the CPU cost comparison  between the proposed NOES-DG scheme and standard DG scheme for $k=2$ in Table \ref{tabCPU}. It is observed that the percentage of increase in cost is negligible (less than 4\%), highlighting the efficiency of the proposed technique.

Next, we test this problem with the non-smooth initial data
$$
u\left( x,0 \right) =\begin{cases}
	1, \quad 0.5\pi <x<1.5\pi,\\
	0, \quad \mathrm{otherwise}.\\
\end{cases}
$$
In Figure \ref{figstep}, we present the result at $T = 2\pi$ with $N= 200$ cells. It can be seen that there is no obvious oscillation near the discontinuity, demonstrating that our proposed scheme can control spurious oscillation efficiently. The development of total entropy is shown in Figure \ref{figU1}(a), indicating that this quantity does not increase with time during the simulation. 
 
\begin{table}[htb!]
        \centering
        \caption{Example \ref{ex:linear}: One dimensional linear equation with smooth initial data. Errors and orders at final time $T = 2\pi$.}
	\setlength{\tabcolsep}{3.2mm}{
		\begin{tabular}{|c|c|cc|cc|cc|}
			\hline 
   &$N$ & $L^1$ error & order & $L^2$ error & order & $L^\infty$ error & order  \\ \hline 
   \multirow{4}{*}{$k=1$} 
    &64  &3.39e-04 & -- &4.35e-04 & -- &1.09e-03 & -- \\
   &128  &8.09e-05 & 2.07 &1.05e-04 &2.06 &3.05e-04 &1.83 \\
   &256  &1.99e-05 & 2.02 &2.60e-05 &2.01 &7.96e-05 &1.94 \\
   &512  &4.94e-06 & 2.01 &6.48e-06 &2.00 &2.02e-05 &1.97 \\
			\hline
   \multirow{4}{*}{$k=2$} 
    & 64  &4.38e-06 & -- &5.15e-06 & -- &1.27e-05 & -- \\
   & 128  &3.90e-07 & 3.49 &4.75e-07 &3.44 &1.29e-06 &3.30 \\
   & 256  &4.65e-08 & 3.07 &5.66e-08 &3.07 &1.57e-07 &3.05 \\
   & 512  &5.75e-09 & 3.02 &6.99e-09 &3.02 &1.95e-08 &3.01 \\ 
		\hline 
  \multirow{4}{*}{$k=3$}
    &64  &6.13e-08 & -- &7.29e-08 & -- &1.60e-07 & -- \\
   &128  &1.98e-09 & 4.95 &2.60e-09 &4.81 &6.30e-09 &4.67 \\
   &256  &1.08e-10 & 4.19 &1.45e-10 &4.16 &3.82e-10 &4.04 \\ 
   &512  &7.10e-12 & 3.93 &9.16e-12 &3.98 &2.27e-11 &4.07 \\
   \hline
	\end{tabular}} 
    \label{tab1}
\end{table}

\begin{table}[htb!]
        \centering
        \caption{Example \ref{ex:linear}: One dimensional linear equation with a smooth initial data. The comparison of CPU cost (in seconds) for $k=2$ up to $T=2\pi$.}
	\setlength{\tabcolsep}{3.2mm}{
		\begin{tabular}{|c|c|c|c|}
			\hline    $N$ & Standard & NOES & Ratio \\  
            \hline 64 & 0.3996s & 0.4116s & 1:1.0300\\
            \hline 128 & 1.3884s & 1.4434s & 1:1.0396\\
            \hline 256 & 5.3796s & 5.4651s & 1:1.0159\\
            \hline 512 & 21.1410s & 21.2178s & 1:1.0036\\
            \hline
	\end{tabular}}
    \label{tabCPU}
\end{table}

\begin{figure}[htbp!]
	\centering
    \subfigure[Overview.]{
		\includegraphics[width=0.45\linewidth]{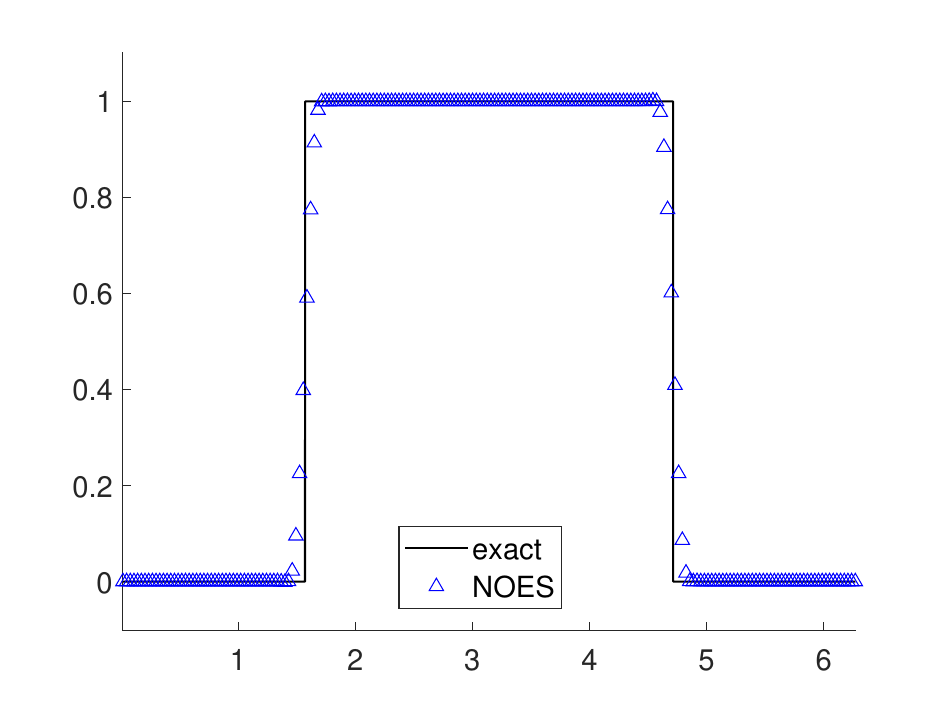}}
	\subfigure[Zoomed in results.]{
		\includegraphics[width=0.45\linewidth]{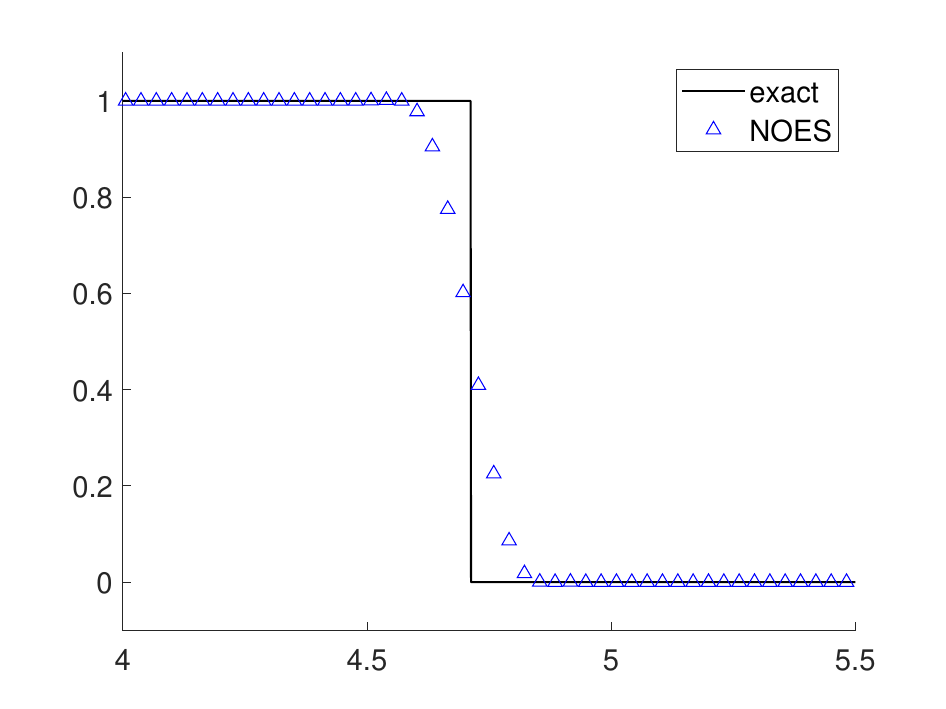}}
	\caption{Example \ref{ex:linear}: one dimensional linear equation with non-smooth initial data. The numerical solution at $T = 2\pi$ with $N = 200$.}
    \label{figstep}
\end{figure}

\begin{Ex}
\textbf{(Burgers' equation.)}
\label{ex:Burgers1D}
\end{Ex}

Here we consider the Burgers' equation
$$ u_t + \left(\frac{u^2}{2}\right)_x=0, \quad 
x\in \Omega = [0,2\pi]$$
with periodic boundary conditions. The initial condition is given by $u(x,0) = 0.5 + \sin x$.
We choose the entropy function as $U(u) = e^u$.

We first compare the numerical results at the final time $T = 0.6$, when the solution is still smooth. The errors and orders of accuracy are presented in Table \ref{tab2}. Again, we can obtain the optimal convergence rate for all cases. 

Moreover, in Figure \ref{figBurgers}, we present the results with $N=200$ cells at final time $T = 2.2$, when the solution contains a discontinuity in $\Omega$. We can see that our scheme captures the shock well without numerical oscillations. In Figure \ref{figU1}(b), we present the development of total entropy with time. It can be seen that the total entropy does not increasing.

\begin{table}[htb!]
        \centering
        \caption{Example \ref{ex:Burgers1D}: one-dimensional Burgers' equation. Errors and orders at final time $T = 0.6$.}
	\setlength{\tabcolsep}{3.2mm}{
		\begin{tabular}{|c|c|c|c|c|c|c|c|}
			\hline 
   & $N$ & $L^1$ error & order & $L^2$ error & order & $L^\infty$ error & order  \\ 
			\hline 
   \multirow{4}{*}{$k=1$}
    &64  &4.52e-04 & -- &8.12e-04 & -- &5.02e-03 & -- \\
   &128  &1.16e-04 & 1.97 &2.14e-04 &1.92 &1.38e-03 &1.86 \\
   &256  &2.95e-05 & 1.97 &5.56e-05 &1.95 &3.60e-04 &1.94 \\
   &512  &7.49e-06 & 1.98 &1.42e-05 &1.97 &9.14e-05 &1.98 \\
   \hline
   \multirow{4}{*}{$k=2$}
    &64  &1.22e-05 & --  &3.22e-05 & -- &2.82e-04 & -- \\
   &128  &1.45e-06 & 3.08 &3.84e-06 &3.07 &4.06e-05 &2.80 \\
   &256  &1.82e-07 & 2.99 &4.87e-07 &2.98 &5.44e-06 &2.90 \\
   &512  &2.28e-08 & 3.00 &6.17e-08 &2.98 &7.08e-07 &2.94 \\
			\hline 
   \multirow{4}{*}{$k=3$}
    &64  &4.28e-07 & -- &1.41e-06 & -- &8.69e-06 & -- \\
   &128  &2.42e-08 & 4.15 &8.30e-08 &4.08 &6.22e-07 &3.80 \\
   &256  &1.52e-09 & 3.99 &5.38e-09 &3.95 &4.10e-08 &3.92 \\
   &512  &9.69e-11 & 3.98 &3.46e-10 &3.96 &2.61e-09 &3.97 \\
   
			\hline 
	\end{tabular}} 
    \label{tab2}
\end{table}

\begin{figure}[htbp!]
	\centering
    \subfigure[Overview.]{
		\includegraphics[width=0.45\linewidth]{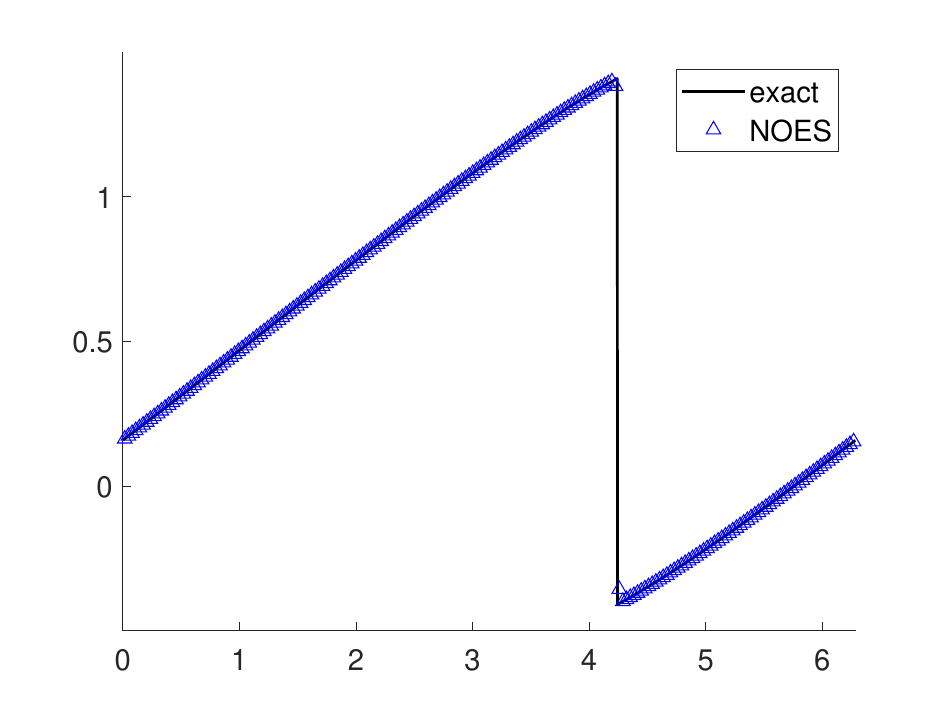}}
	\subfigure[Zoomed in results.]{
		\includegraphics[width=0.45\linewidth]{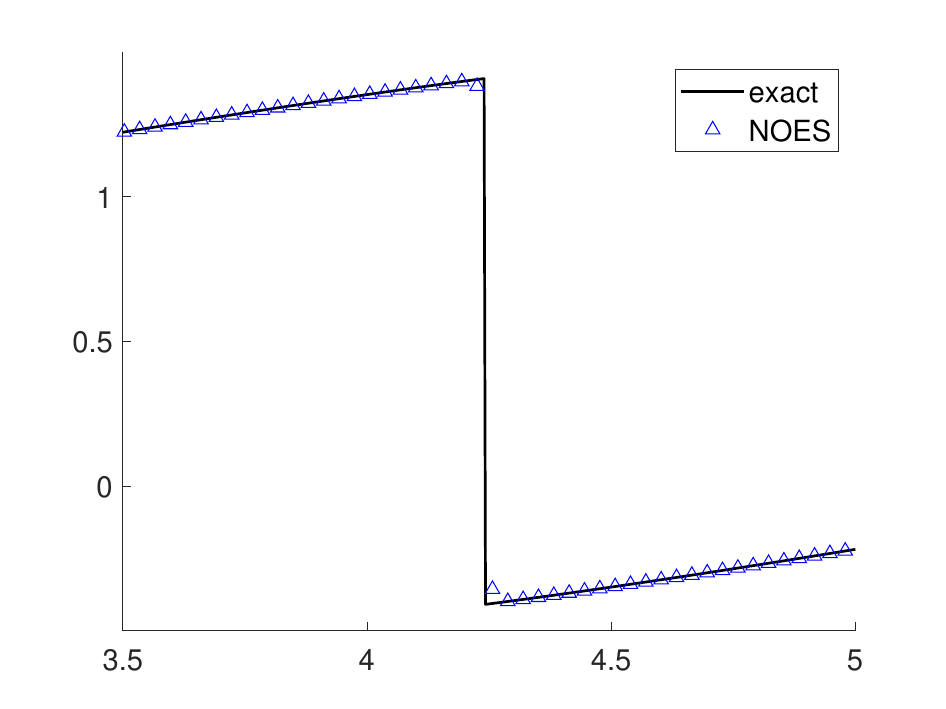}}
	\caption{Example \ref{ex:Burgers1D}: One-dimensional Burgers' equation. The numerical solution at $T = 2.2$ with $N=200$ cells.}
    \label{figBurgers}
\end{figure}

\begin{figure}[htbp!]
	\centering
    \subfigure[Linear equation.]{
		\includegraphics[width=0.45\linewidth]{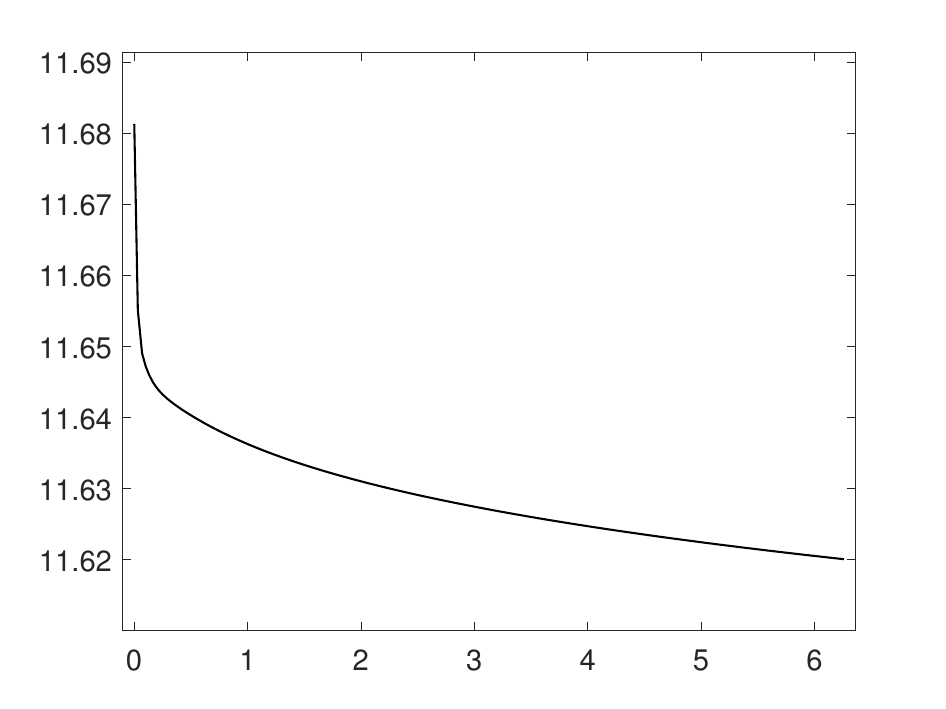}}
	\subfigure[Burgers' equation.]{
		\includegraphics[width=0.45\linewidth]{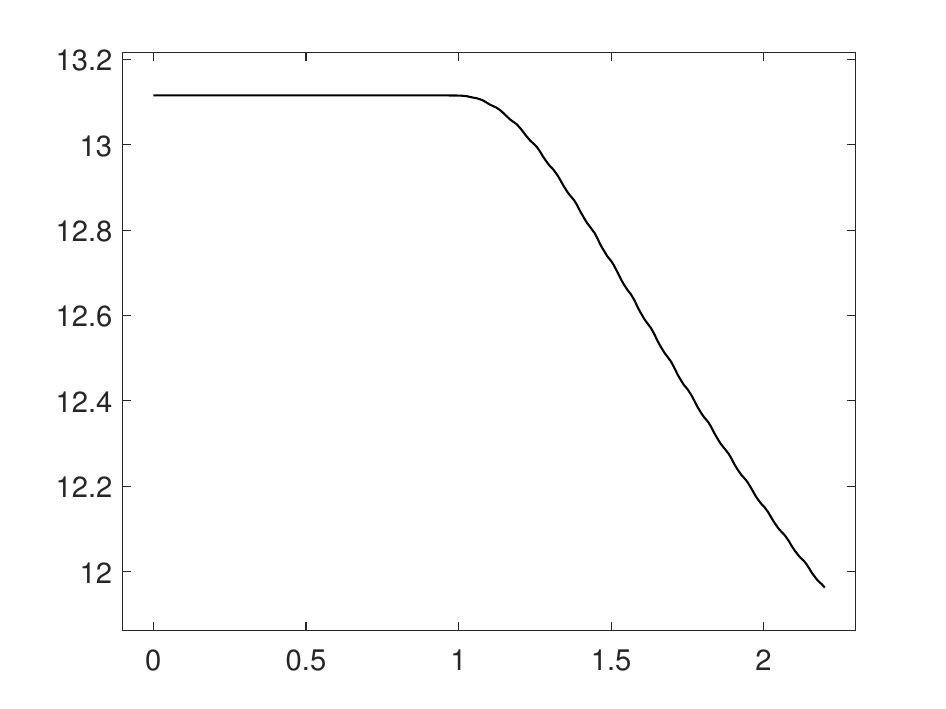}}
	\caption{One-dimensional tests: Evolution of the total entropy with time.}
    \label{figU1}
\end{figure}

\begin{Ex}
\textbf{(Buckley-Leverett equation.)}
\label{ex:BL1D}
\end{Ex}

In this example, we consider the one-dimensional Buckley-Leverett equation 
\begin{equation*}
    u_t + \left( \frac{4u^2}{4u^2+\left( 1-u \right) ^2} \right)_x = 0
\end{equation*}
with a Riemann initial condition
$$
u\left( x,0 \right) =\begin{cases}
	u_L,\quad x\le 0,\\
	u_R,\quad x>0.\\
\end{cases}
$$
This example is very challenging in computation because of the non-convexity of $f$. If
the numerical schemes are not carefully designed, they may fail to converge to the unique entropy solution or may be too slow to converge \cite{kurganov2007adaptive}. The computational domain is $\Omega=[-0.5,0.5]$. Following the setup in \cite{liu2024entropy}, we consider two kinds of initial conditions:
\begin{equation*}
\begin{aligned}
    \text{IC1:}\quad & u_L=-3, \, u_R=3, \\
    \text{IC2:}\quad & u_L=2, \, u_R=-2,
\end{aligned}
\end{equation*}
with three different entropy functions:
\begin{equation*}
\begin{aligned}
    \text{U1:}\quad &U(u) = \dfrac{u^2}{2}, \\
    \text{U2:}\quad &U(u) = \displaystyle\int \arctan(20u)\mathrm du,\\
    \text{U3:}\quad &U(u) = \displaystyle\int\arctan(u-1)\mathrm du.
\end{aligned}
\end{equation*}
The first entropy function is the standard square entropy, the second one is a
modified version of the Kruzhkov's entropy $U(u)=|u|$, and the third one is a
modified version of the translated Kruzhkov's entropy $U(u)=|u-1|$.

In Figure \ref{figBL1}, we present the results at $T = 1$ with $N= 200$ cells. The reference solution is simulated by the fifth-order finite difference WENO scheme on 10000 cells. 
From these results, we can see that for IC1, scheme with U2 gives the best simulation. Meanwhile, for IC2, only scheme with U3 can provide the satisfactory results. 
This is consistent with the results in \cite{chen2017entropy, liu2024entropy}. The results indicate that carefully selecting the entropy function helps to obtain physically correct solutions, especially for the nonconvex hyperbolic conservation laws. 

Moreover, to demonstrate the efficacy of parameter $\sigma$ in \eqref{scheme1}, we present the results for different settings for $\sigma$ in Figure \ref{figBL2}. We denote $\sigma_i=\sigma_i^{entropy}$ as "entropy",  $\sigma_i=\sigma_i^{jump}$ as "jump", and the proposed NOES-DG scheme as "entropy + jump" in following. It can be seen that even for IC1 with U2, the solution is incorrect if we only concern the entropy term. And for IC2 with U3, the solution is incorrect when we only consider the jump term. These results show that two parameters, $\sigma_i^{jump}$ and $\sigma_i^{entropy}$, are both necessary.

\begin{figure}[htbp!]
	\centering
    \subfigure[IC1.]{
		\includegraphics[width=0.45\linewidth]{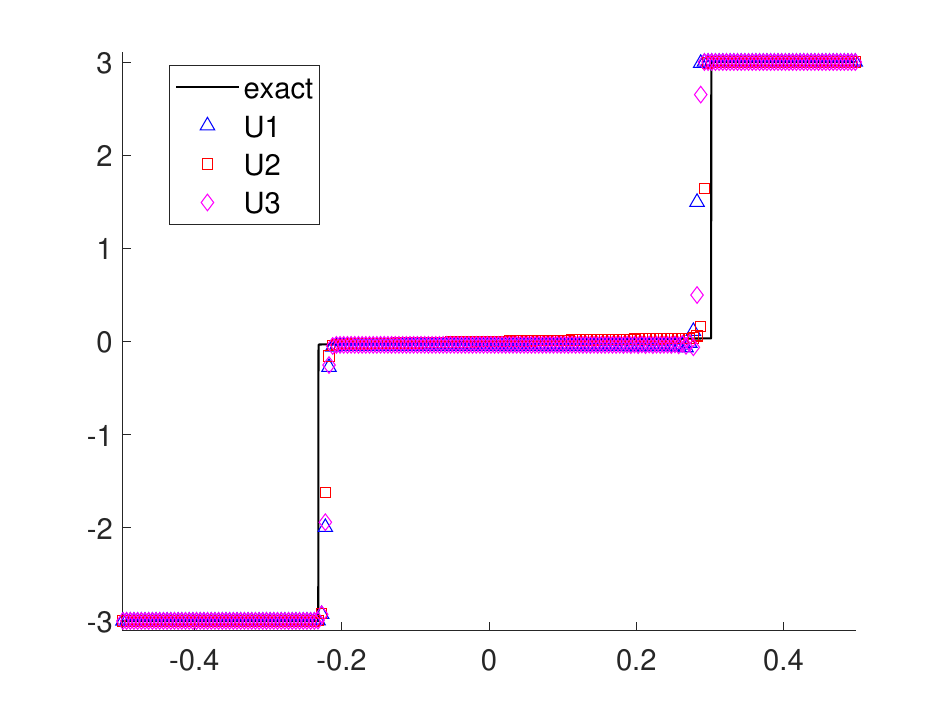}}
	\subfigure[Zoomed in (a).]{
		\includegraphics[width=0.45\linewidth]{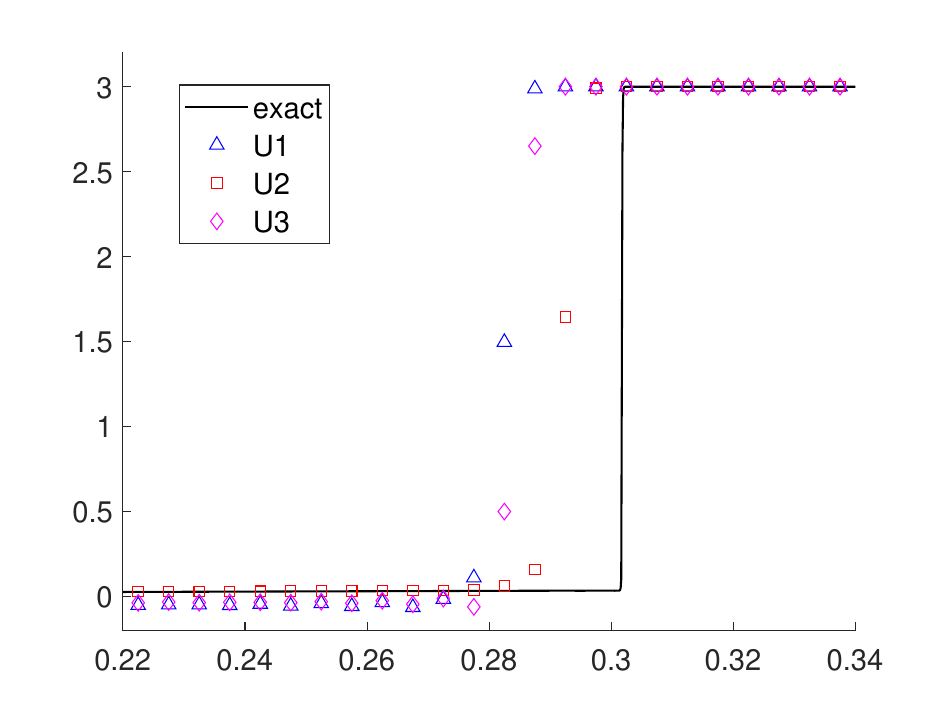}}
    \subfigure[IC2.]{
		\includegraphics[width=0.45\linewidth]{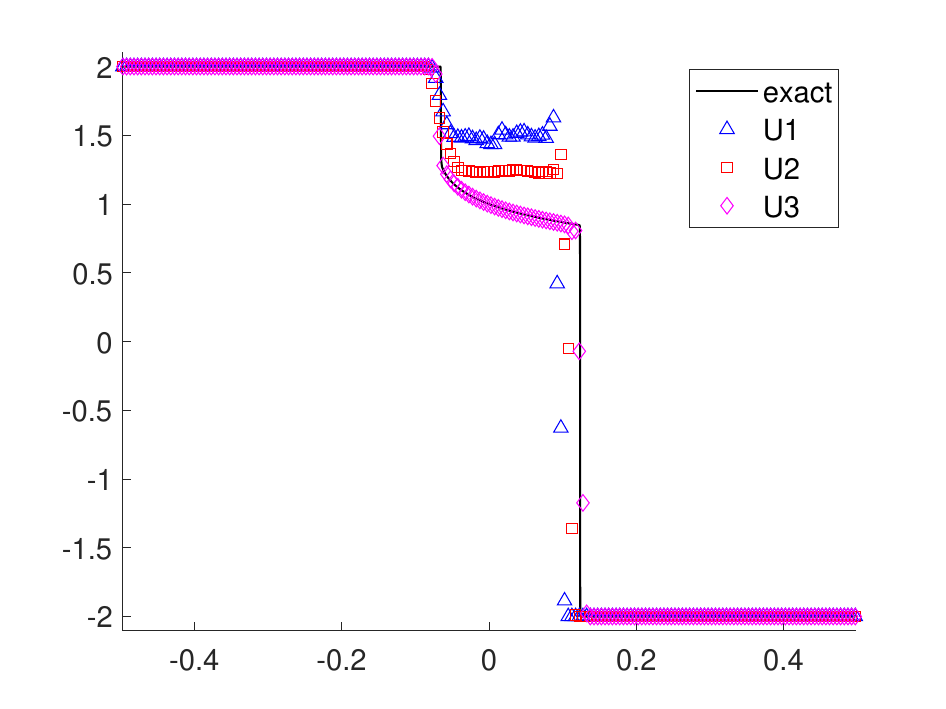}}
	\subfigure[Zoomed in (c).]{
		\includegraphics[width=0.45\linewidth]{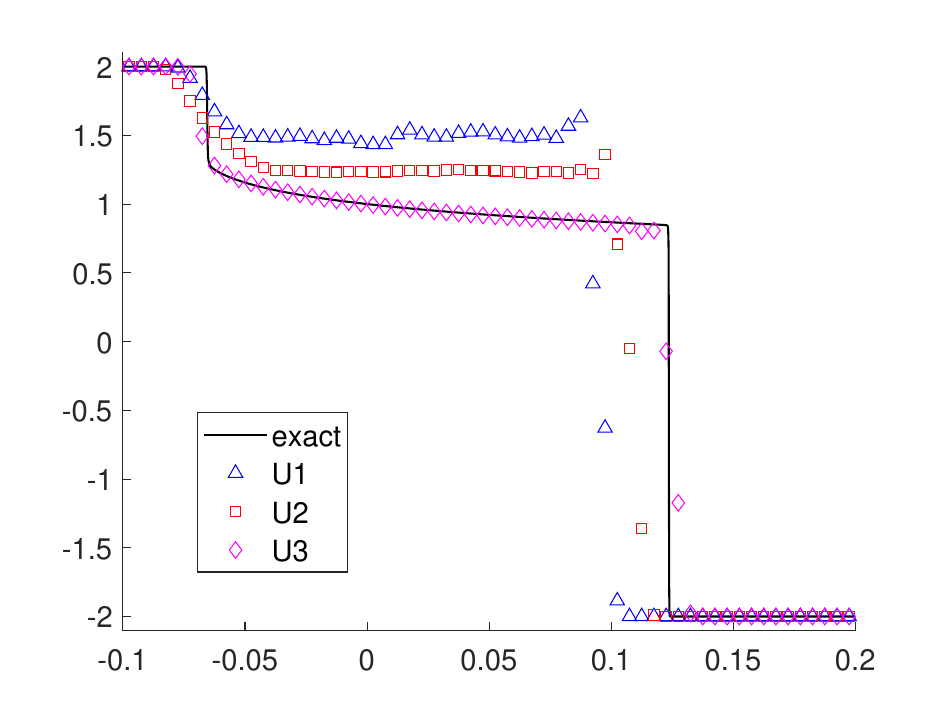}}
	\caption{Example \ref{ex:BL1D}: One dimensional Buckley-Leverett problem with different initial conditions. Numerical solutions at $T = 1$ with $N= 200$ cells.}
 \label{figBL1}
\end{figure}

\begin{figure}[htbp!]
	\centering
    \subfigure[IC1 with U2.]{
		\includegraphics[width=0.45\linewidth]{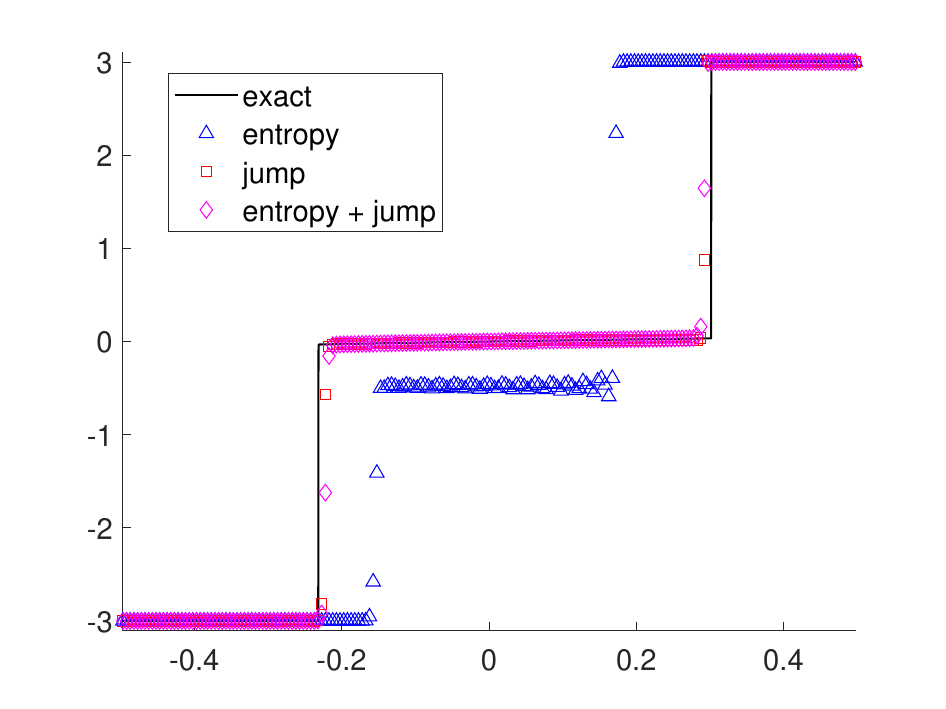}}
	\subfigure[IC2 with U3.]{
		\includegraphics[width=0.45\linewidth]{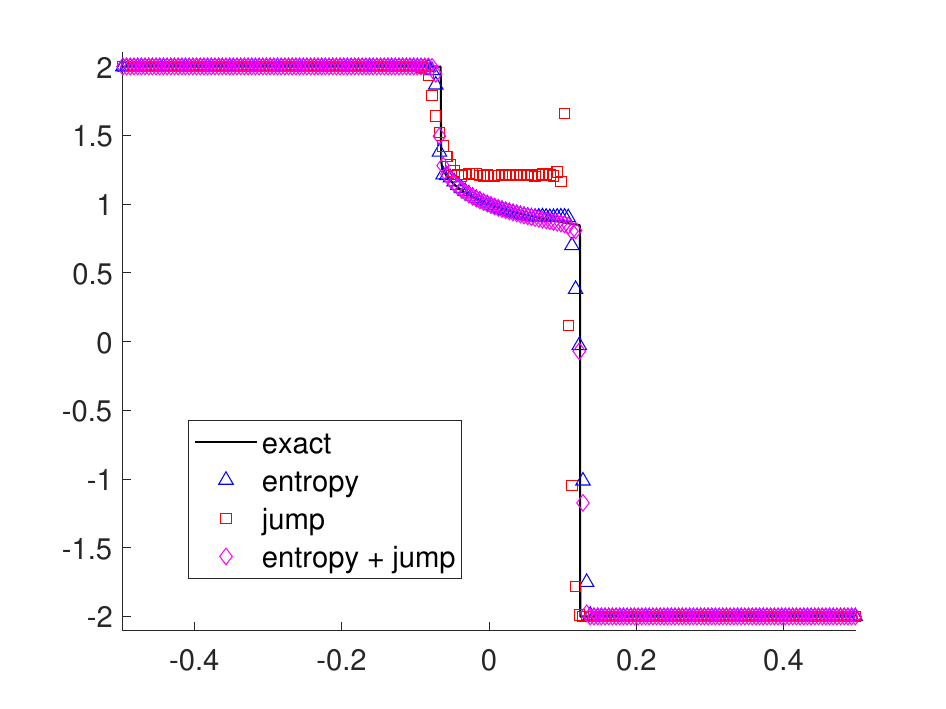}}
	\caption{Example \ref{ex:BL1D}: One dimensional Buckley-Leverett problem with different settings of $\sigma$. Numerical solutions at $T = 1$ with $N= 200$ cells.}
 \label{figBL2}
\end{figure}

\subsection{Two-dimensional tests}

\begin{Ex}
\textbf{(Burgers' equation.)}
\label{ex:Burgers2D}
\end{Ex}

Here we consider the two-dimensional Burgers' equation
$$ u_t + \left(\frac{u^2}{2}\right)_x + \left(\frac{u^2}{2}\right)_y=0.$$
The computational domain is $\Omega = [0,2\pi]\times[0,2\pi]$ with periodic boundary conditions in each direction. The initial condition is given as
$$ u(x,y,0) = \sin (x + y). $$
In this example, we choose the entropy function as $U(u) = \cosh(u)$.

At first, we perform accuracy test at the final time $T = 0.3$, when the solution is still smooth. 
Errors and orders are presented in Table \ref{tab3}. We can clearly observe
that the scheme achieves the designed $(k+1)$-th order accuracy. 
Next, in Figure \ref{figBurgers2D}, we plot the numerical solution with $N_x = N_y = 200$ at final time $T = 0.8$, when shocks have already appeared. A one-dimensional cut at $x = y$ of
the solution is also presented. We can see that the shocks are well resolved without spurious oscillations. In Figure \ref{figU2}(a), we present the evolution of total entropy with time. It can be seen that the entropy does not increasing.

\begin{table}[htb!]
        \centering
        \caption{Example \ref{ex:Burgers2D}: Two-dimensional Burgers' equation. Errors and orders at final time $T = 0.3$.}
	\setlength{\tabcolsep}{3.2mm}{
		\begin{tabular}{|c|c|cc|cc|cc|}
			\hline 
   & $N_x\times N_y$ & $L^1$ error & order & $L^2$ error & order & $L^\infty$ error & order  \\ \hline 
   \multirow{4}{*}{$k=1$}
   &$64\times 64$ & 1.85E-03
			& -- & 4.95E-03 & -- & 4.78E-02 & --   \\ 
   &$128\times 128$ & 2.47E-04 & 2.91 
			& 5.12E-04 & 3.27 
			& 4.86E-03 & 3.30 		
			\\ 
   &$256\times 256$ & 5.07E-05 & 2.28 
			& 1.04E-04
			& 2.30
			& 7.49E-04
			& 2.70		
			\\ 
   &$512\times 512$ & 1.27E-05
			& 1.99
			& 2.63E-05
			& 1.99 
			& 1.90E-04
			& 1.98		
			\\ 
			\hline
   \multirow{4}{*}{$k=2$}
   &$64\times 64$ & 4.73E-05
			& -- & 1.08E-04 & -- & 1.04E-03 &  --  \\ 
			&$128\times 128$ & 5.83E-06 & 2.91 
			& 1.47E-05 & 2.88 
			& 1.23E-04 & 3.08
			\\ 
			&$256\times 256$ & 7.59E-07 & 2.94
			& 2.00E-06
			& 2.88
			& 2.06E-05
			& 2.59
			\\ 
			& $512\times 512$ & 9.78E-08
			& 2.96
			& 2.69E-07
			& 2.90
			& 3.12E-06
			& 2.72
			\\ 
			\hline 
    \multirow{4}{*}{$k=3$}			
			&$64\times 64$ & 3.64E-06
			& -- & 1.25E-05 & -- & 1.50E-04 & --   \\ 
			&$128\times 128$ & 2.13E-07 & 4.09 
			& 7.59E-07 & 4.04 
			& 1.07E-05 & 3.82 
			\\ 
			& $256\times 256$ & 1.35E-08 & 3.98 
			& 4.67E-08
			& 4.02
			& 5.49E-07
			& 4.28
			\\ 
			& $512\times 512$ & 8.53E-10
			& 3.98
			& 2.96E-09
			& 3.98
			& 3.48E-08
			& 3.98
			\\ 
			\hline 
	\end{tabular}} 
    \label{tab3}
\end{table}

\begin{figure}[htbp!]
	\centering
    \subfigure[Contour.]{
		\includegraphics[width=0.45\linewidth]{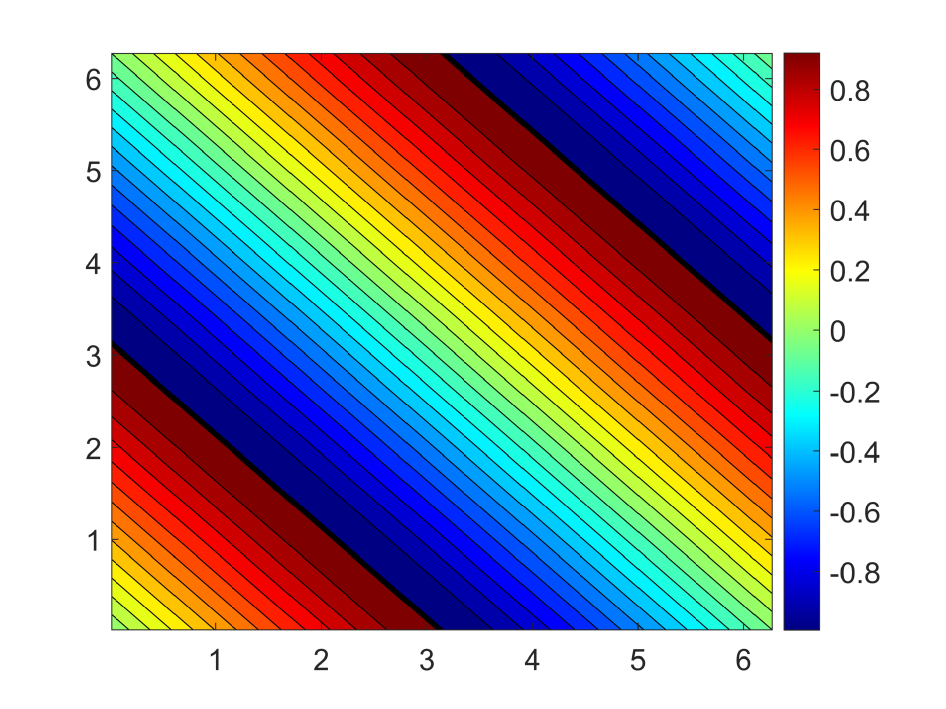}}
	\subfigure[Cut at $x = y$.]{
		\includegraphics[width=0.45\linewidth]{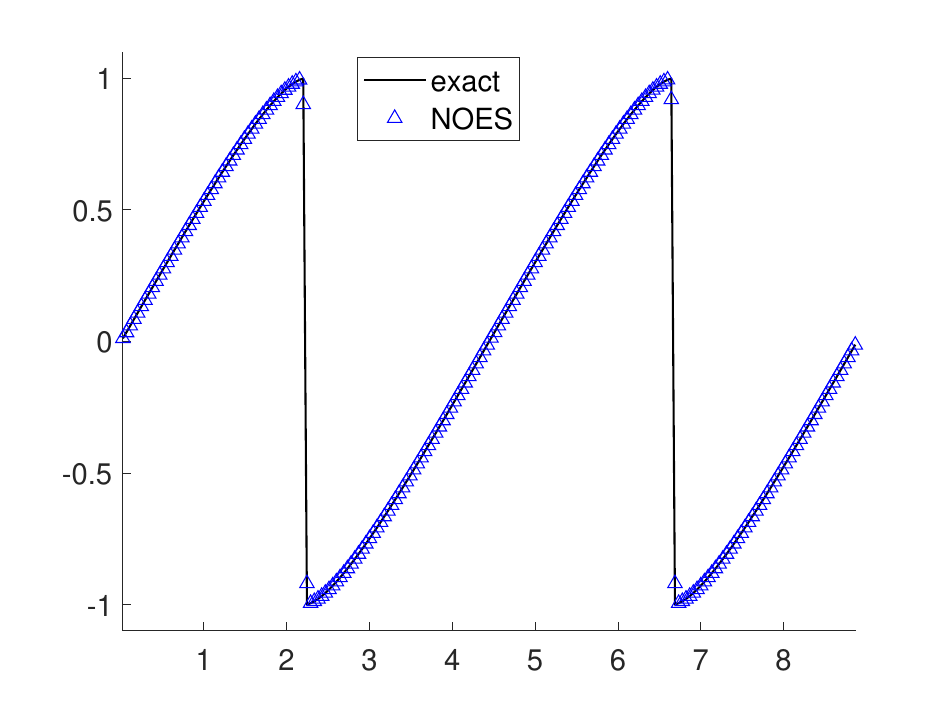}}
	\caption{Example \ref{ex:Burgers2D}: Two-dimensional Burgers' equation. The numerical solution at $T = 0.8$ with $N_x = N_y=200$.}
 \label{figBurgers2D}
\end{figure}

\begin{Ex}
\textbf{(KPP problem.)}
\label{ex:KPP}
\end{Ex}

In this example, we consider the two-dimensional KPP problem \cite{kurganov2007adaptive}
$$ u_t+\left(\sin u\right)_x + \left(\cos u\right)_y=0, $$
which is very challenging for many high-order numerical schemes because the solution has a
two-dimensional composite wave structure. The computational domain is taken as $\Omega=[-1.5,1.5]\times[-2,1]$ with periodic boundary condition in each direction. 
The initial data is given as
$$
u\left( x,y,0 \right) =\begin{cases}
	3.5\pi , & x^2+y^2<0.5,\\
	0.25\pi ,& \mathrm{otherwise}.\\
\end{cases}
$$
In this example, we take the entropy function as $U=\cosh(u)$. In Figure \ref{figKPP}, we present the result at $T = 1$ with $200\times 200$ meshes. It can be seen that our result agrees well with that in \cite{kurganov2007adaptive}. In Figure \ref{figU2}(b), we present the evolution with total entropy, and it is not increasing with time.  

\begin{figure}[htbp]
    \centering
	\includegraphics[width=0.5\linewidth]{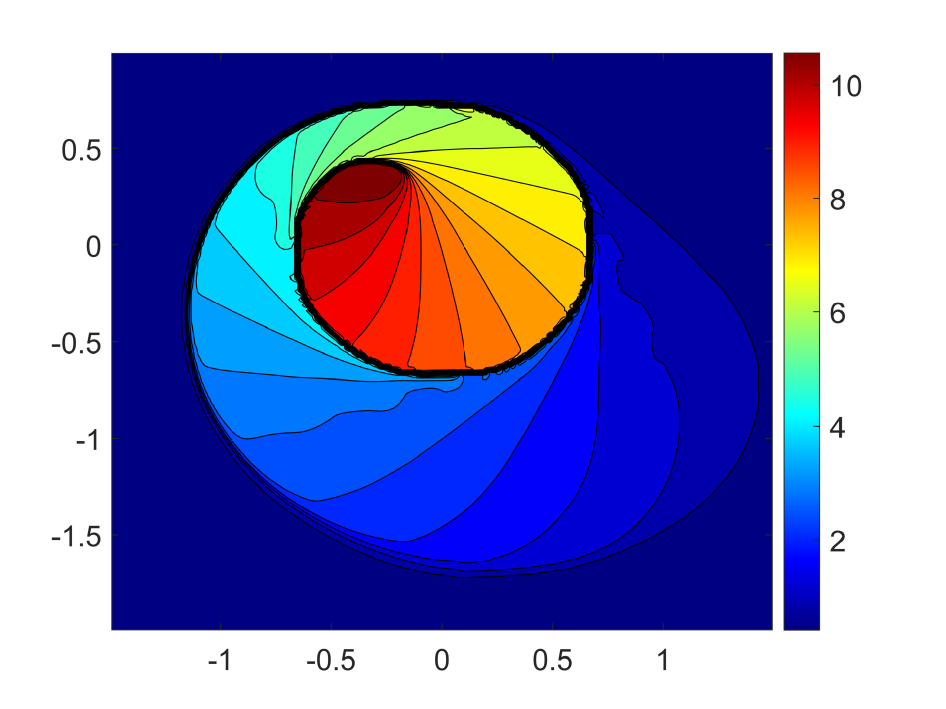}
    \caption{Example \ref{ex:KPP}: KPP problem. The numerical solution at $T = 1$ with $N_x\times N_y = 200\times 200$.}
    \label{figKPP}
\end{figure}

\begin{figure}[htbp!]
	\centering
    \subfigure[Burgers' equation.]{
		\includegraphics[width=0.45\linewidth]{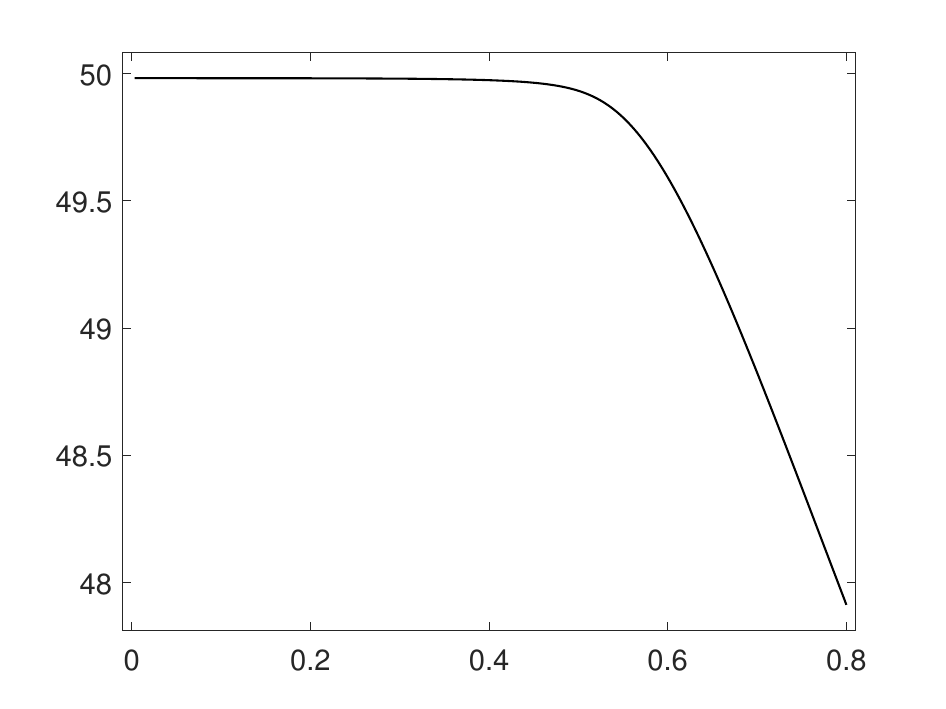}}
	\subfigure[KPP problem.]{
		\includegraphics[width=0.45\linewidth]{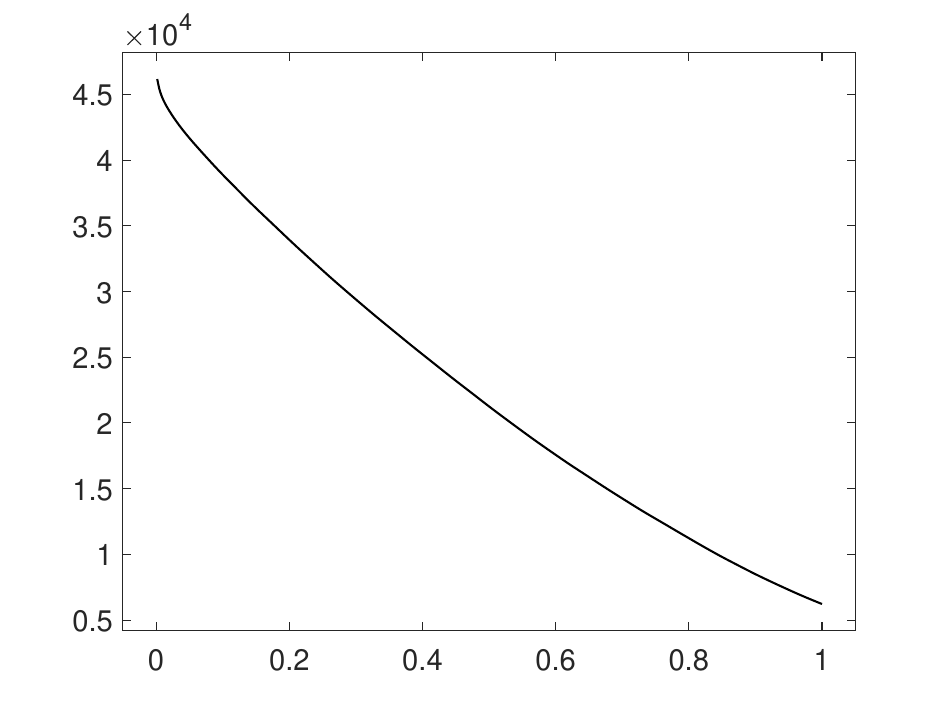}}
	\caption{Two-dimensional tests: The evolution of total entropy with time.}
    \label{figU2}
\end{figure}

\subsection{Euler system}

For Euler system, we employ the HLL flux in the simulations.  The PP limiter is utilized on each stage of the RK method used.

\begin{Ex}
\textbf{(1D accuracy test.)}
\label{ex:Eulersin}
\end{Ex}

At first, we test the accuracy of the scheme on one-dimensional Euler equation. The computational domain is $\Omega = [0,2\pi]$ with periodic boundary condition, the initial data is given by
$$(\rho,u,p)|_{t=0} = (1+0.2\sin x,1,1).$$
The exact solution is
$$ (\rho,u,p) = (1 + 0.2\sin(x-t),1,1). $$
In Table \ref{tab4}, we present the errors and orders of $\rho$ at $T = 1$. Clearly, the optimal convergence rate is obtained.

\begin{table}[htb!]
        \centering
        \caption{Example \ref{ex:Eulersin}: One-dimensional Euler equation. Errors and orders of density $\rho$ with the final time $T = 1$.}
	\setlength{\tabcolsep}{3.2mm}{
		\begin{tabular}{|c|c|cc|cc|cc|}
			\hline 
   &$N$ & $L^1$ error & order & $L^2$ error & order & $L^\infty$ error & order  \\ 
			\hline 
   \multirow{4}{*}{$k=1$}
    &64  &6.33e-05 & -- &8.29e-05 & -- &2.59e-04 & -- \\ 
   &128  &1.58e-05 & 2.01 &2.07e-05 &2.00 &6.53e-05 &1.99 \\ 
   &256  &3.94e-06 & 2.00 &5.18e-06 &2.00 &1.64e-05 &1.99 \\
   &512  &9.83e-07 & 2.00 &1.30e-06 &2.00 &4.11e-06 &2.00 \\
			\hline  
   \multirow{4}{*}{$k=2$}
    &64  &5.23e-07 & -- &6.70e-07 & -- &2.75e-06 & -- \\ 
   &128  &6.48e-08 & 3.01 &8.19e-08 &3.03 &2.60e-07 &3.40 \\
   &256  &8.09e-09 & 3.00 &1.02e-08 &3.00 &3.06e-08 &3.09 \\ 
   &512  &1.01e-09 & 3.00 &1.28e-09 &3.00 &3.80e-09 &3.01 \\
			\hline 
   \multirow{4}{*}{$k=3$}
    &64  &3.15e-09 & -- &4.06e-09 & -- &1.21e-08 & -- \\ 
   &128  &1.93e-10 & 4.03 &2.48e-10 &4.03 &6.01e-10 &4.33 \\ 
   &256  &1.20e-11 & 4.00 &1.55e-11 &4.00 &3.48e-11 &4.11 \\ 
   &512  &7.57e-13 & 3.99 &9.71e-13 &3.99 &2.27e-12 &3.94 \\
			\hline 
	\end{tabular}} 
    \label{tab4}
\end{table}

\begin{Ex}
\textbf{(Sod shock tube.)}
\label{ex:Sod}
\end{Ex}

Here, we test the 1D Sod shock tube problem \cite{sod1978survey}, which is a standard example for Euler equation. The computational domain is taken as $\Omega = [0,1]$, and the initial data is given by
$$
\left( \rho ,u,p \right) =\begin{cases}
	\left( 1,0,1 \right) , & x<0.5,\\
	\left( 0.125,0,0.1 \right) , & x\ge 0.5.\\
\end{cases}
$$
In Figure \ref{figSod}, we compare the result of density at $T = 0.2$ with exact solution on $N=200$ meshes. We observe that the results are well-resolved and free of oscillations.
To examine the effect of $c_0$, we present results for different values  $c_0=0.2,\ 1,\ 5$ in Figure \ref{figSodsigma}. It is observed that the performance is not highly sensitive to $c_0$.

\begin{figure}[htbp!]
	\centering
    \subfigure[Density.]{
		\includegraphics[width=0.45\linewidth]{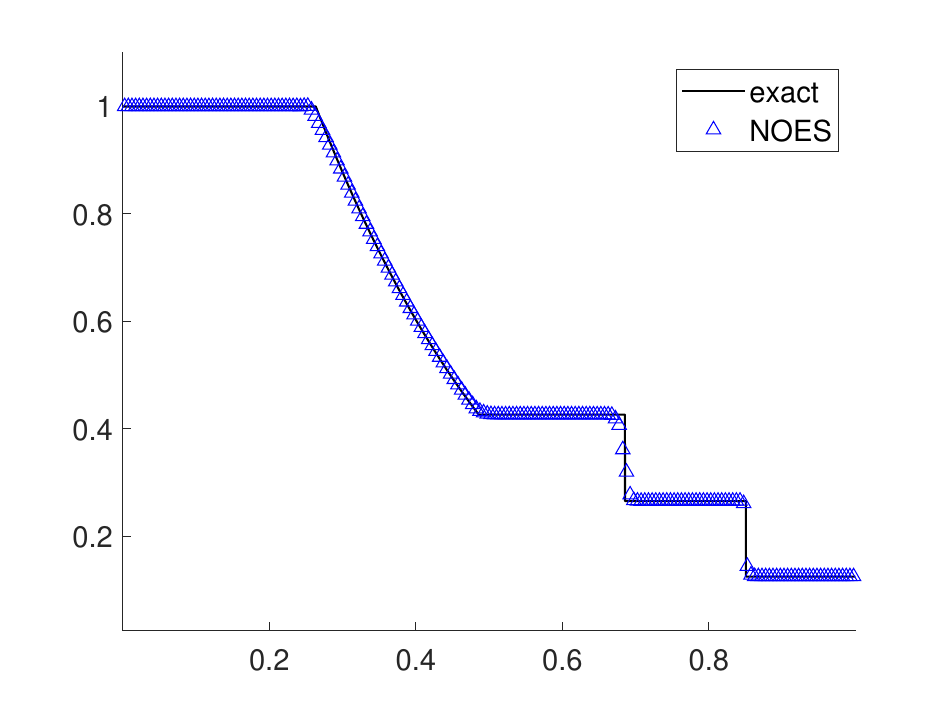}}
	\subfigure[Zoomed in results.]{
		\includegraphics[width=0.45\linewidth]{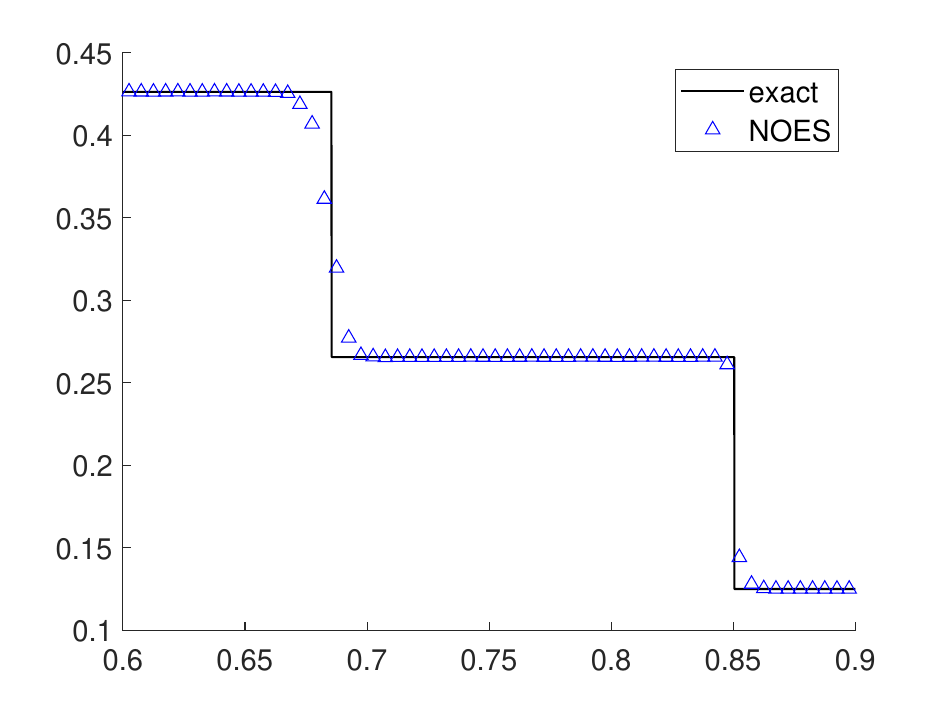}}
	\caption{Example \ref{ex:Sod}: Sod shock tube. The numerical solution at $T = 0.2$ with $N = 200$.}
    \label{figSod}
\end{figure}

\begin{figure}[htbp!]
	\centering
    \subfigure[Density with different $c_0$.]{
		\includegraphics[width=0.45\linewidth]{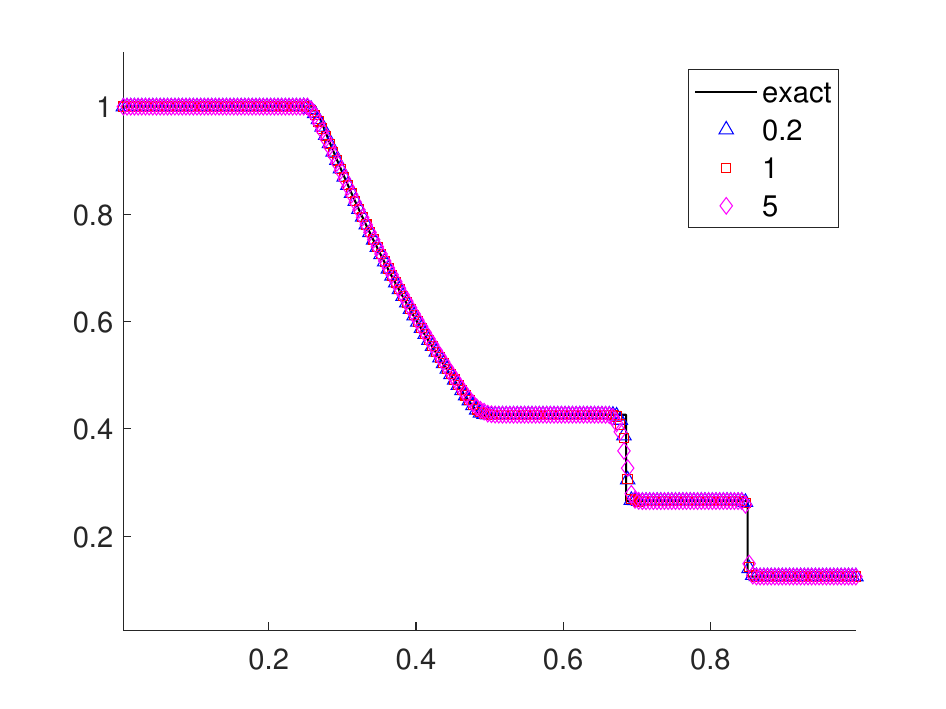}}
	\subfigure[Zoomed in results.]{
		\includegraphics[width=0.45\linewidth]{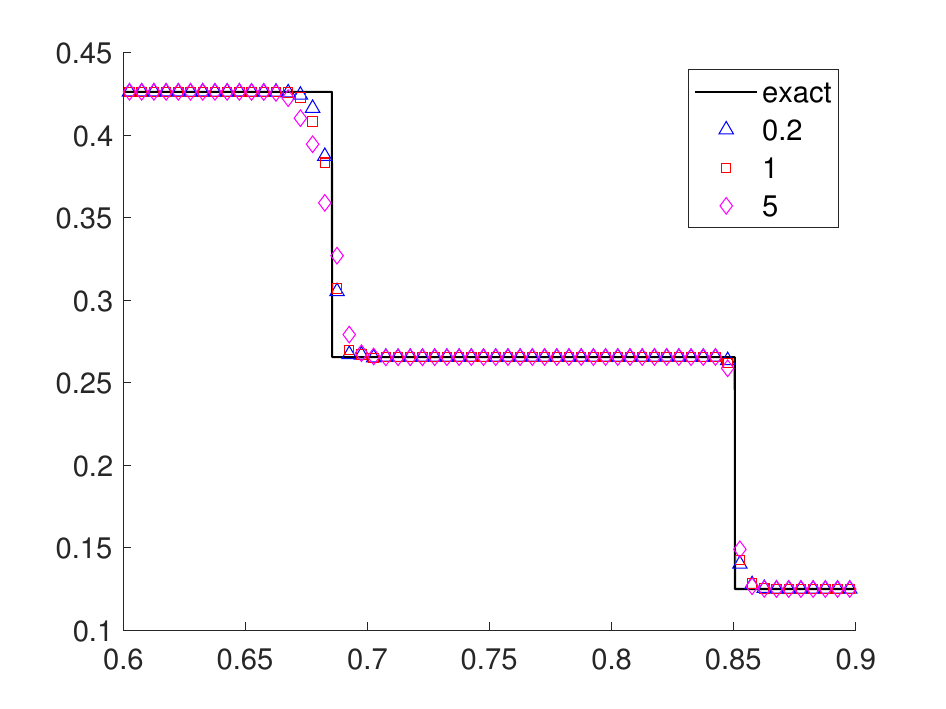}}
	\caption{Example \ref{ex:Sod}: Sod shock tube. The numerical solution at $T = 0.2$ with $N = 200$ with different values of $c_0$.}
    \label{figSodsigma}
\end{figure}

\begin{Ex}
\textbf{(Shu-Osher problem.)}
\label{ex:Shu}
\end{Ex}

Next, we test the Shu-Osher problem \cite{shu1988efficient}. The solution has complicated structure in that it contains both strong and weak shock waves and highly oscillatory smooth waves. The computational domain is taken as $\Omega = [-5,5]$. And the initial data is 
$$
\left( \rho ,u,p \right) =\begin{cases}
	\left( 3.857143,2.629369,10.3333 \right) , \quad x<-4,\\
	\left( 1+0.2\sin \left( 5x \right) ,0,1 \right) , \qquad x\ge 4.\\
\end{cases}
$$
In Figure \ref{figShu}, we present the results of density and pressure at $T = 1.8$ with $N=200$.
 The reference solution is computed by the fifth-order finite difference WENO scheme with 2000 cells. Our numerical results seem to agree with the reference solution very well.

\begin{figure}[htbp!]
	\centering
    \subfigure[Density.]{
		\includegraphics[width=0.45\linewidth]{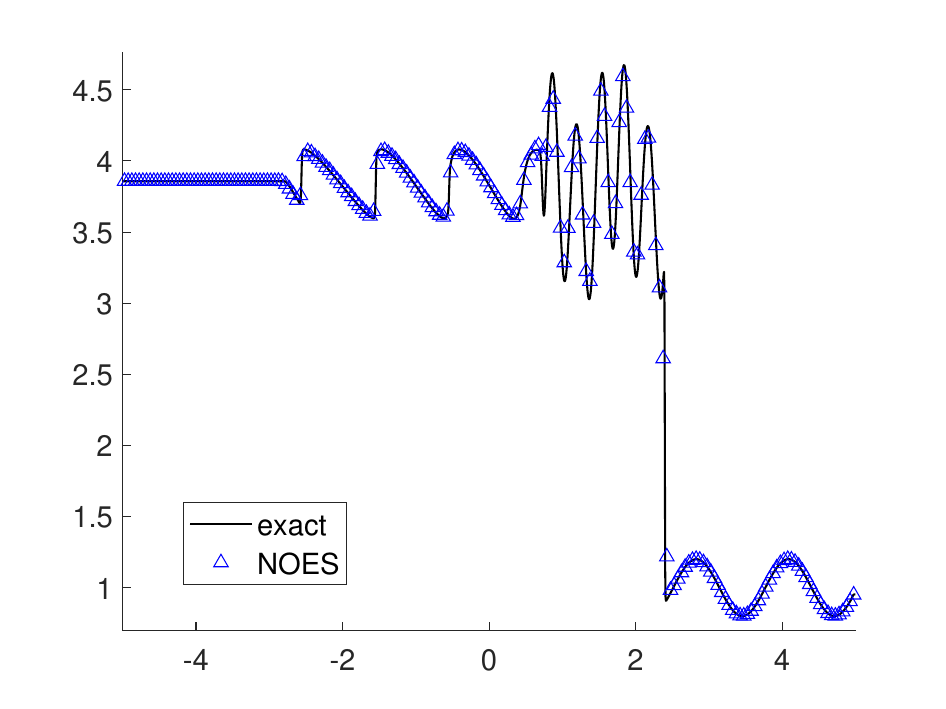}}
	\subfigure[Zoomed in results.]{
		\includegraphics[width=0.45\linewidth]{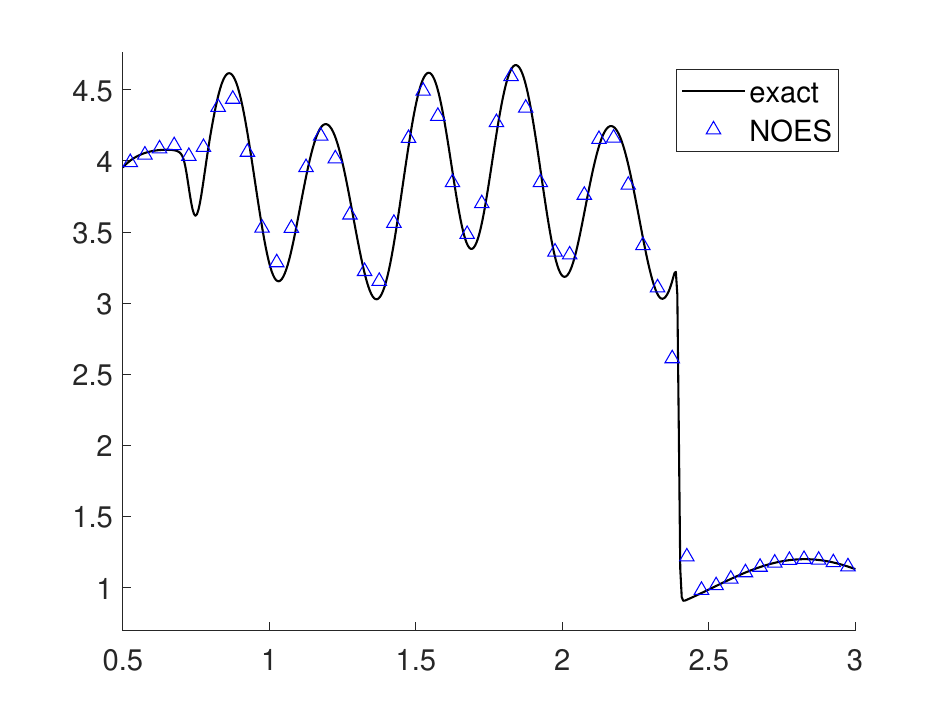}}
	\caption{Example \ref{ex:Shu}: Shu-Osher problem. The density of numerical solution at $T = 1.8$ with $N= 200$.}
    \label{figShu}
\end{figure}

\begin{Ex}
\textbf{(Two blast waves.)}
\label{ex:Blast}
\end{Ex}

Here, we consider the two blast-waves problem \cite{woodward1984numerical},  which presents a significant challenge for most high-order schemes. The computational domain is $\Omega = [0,1]$ with reflective boundary conditions imposed on both end points, and the initial data is given by
$$
\rho =1,\quad u=0,\quad p=\begin{cases}
	1000, & x<0.1,\\
	0.01,& 0.1\le x\le 0.9,\\
	100,& x>0.9.\\
\end{cases}
$$
The PP limiter is needed to ensure positivity of pressure and density. In Figure \ref{figBlast}, we present the result of density at $T = 0.038$ with $N= 800$ cells. The reference solution is  obtained with the fifth-order WENO scheme with 2000 cells.  The proposed scheme produces high-quality numerical results with sharp, non-oscillatory shock transitions.

\begin{figure}[htbp!]
	\centering
    \subfigure[Density.]{
		\includegraphics[width=0.45\linewidth]{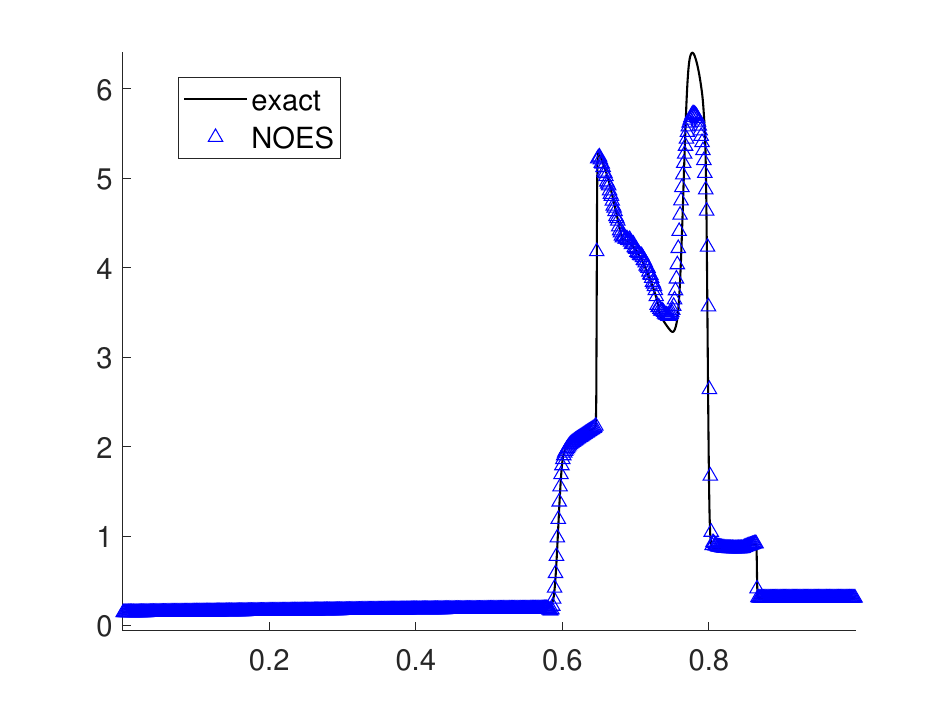}}
	\subfigure[Zoomed in results.]{
		\includegraphics[width=0.45\linewidth]{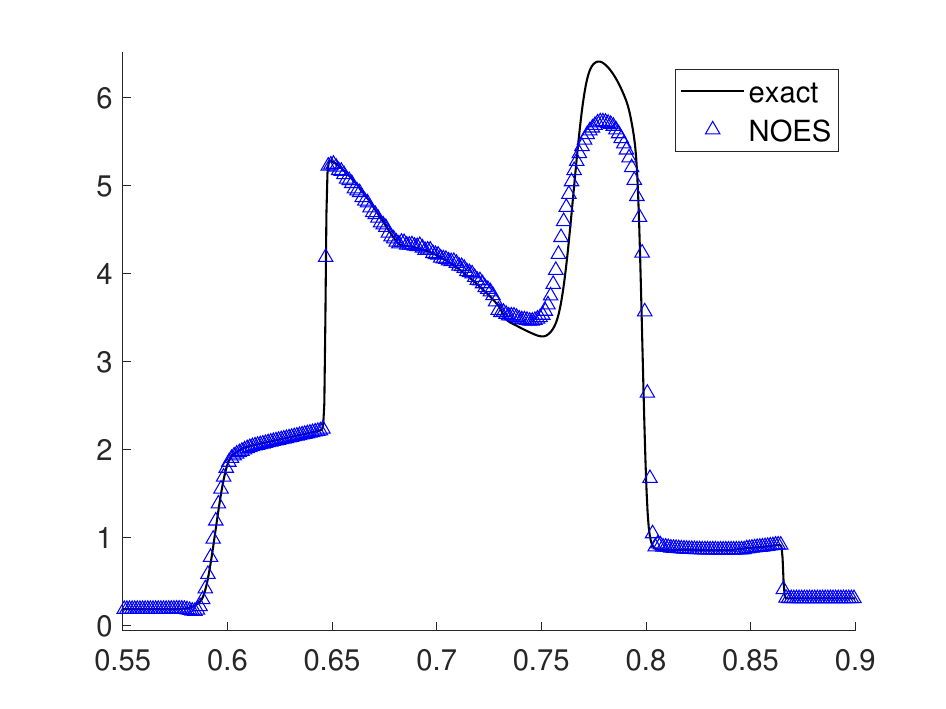}} 
	\caption{Example \ref{ex:Blast}: Two blast waves. The density of numerical solution at $T = 0.038$ with $N_x = 800$.}
    \label{figBlast}
\end{figure}

\begin{Ex}
\textbf{(Leblanc shock tube.)}
\label{ex:Leblanc}
\end{Ex}

Here, we consider the Leblanc problem \cite{loubere2005subcell}, which is an extreme Riemann problem with very strong discontinuities and is used to test robustness of our scheme. The computational domain is $\Omega = [-10,10]$, and the initial data is given by
$$
\left( \rho ,u,p \right) =\begin{cases}
	\left( 2,0,10^9 \right) , & x<0,\\
	\left( 0.001,0,1 \right) , & x\ge 0.\\
\end{cases}
$$
This example, like the previous two blast waves test, requires the PP limiter.  In Figure \ref{figLeblanc}, we compare the result of density, velocity and pressure with the exact solution at $T = 0.0001$ on $N = 800$ and $N = 6400$ meshes. 
 These results show the robustness and convergence of our scheme.

\begin{figure}[htbp!]
	\centering
    \subfigure[$\lg\rho$.]{
		\includegraphics[width=0.45\linewidth]{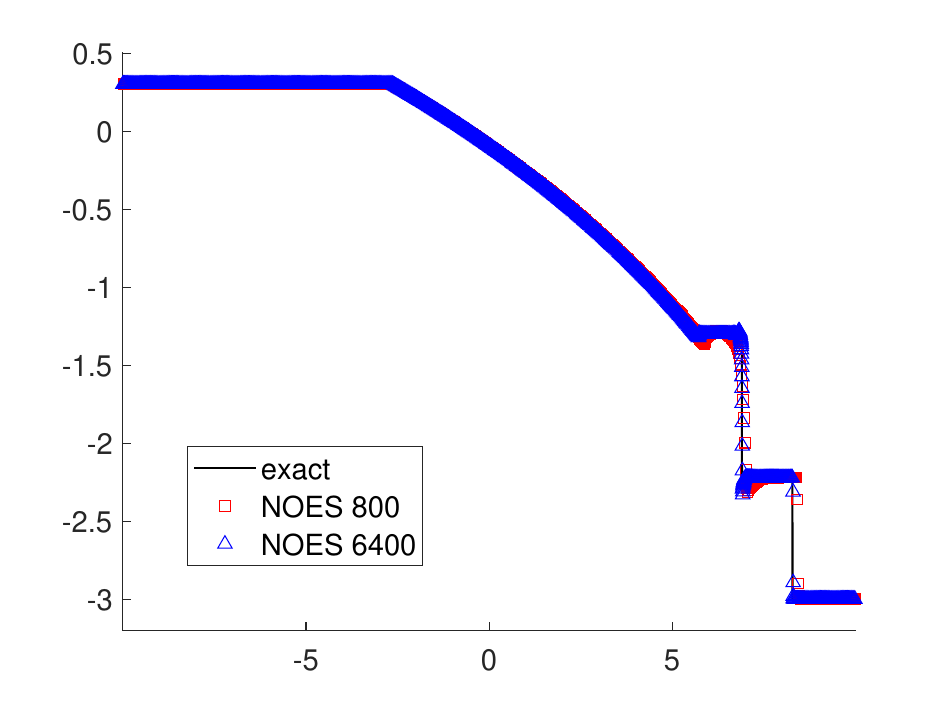}}
	\subfigure[Zoomed in (a).]{
		\includegraphics[width=0.45\linewidth]{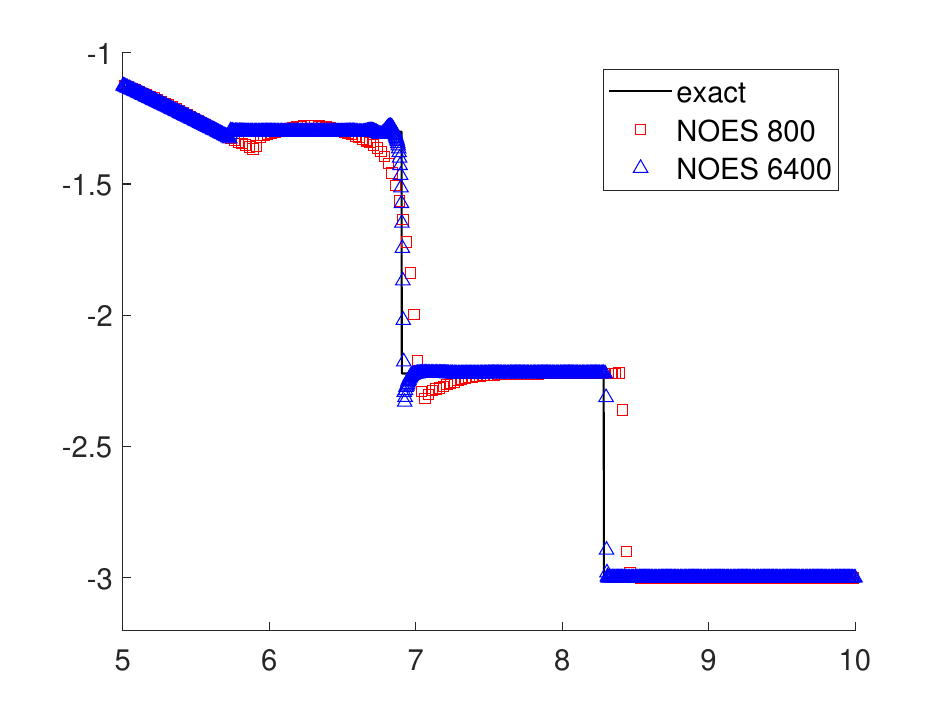}}
    \subfigure[Velocity.]{
		\includegraphics[width=0.45\linewidth]{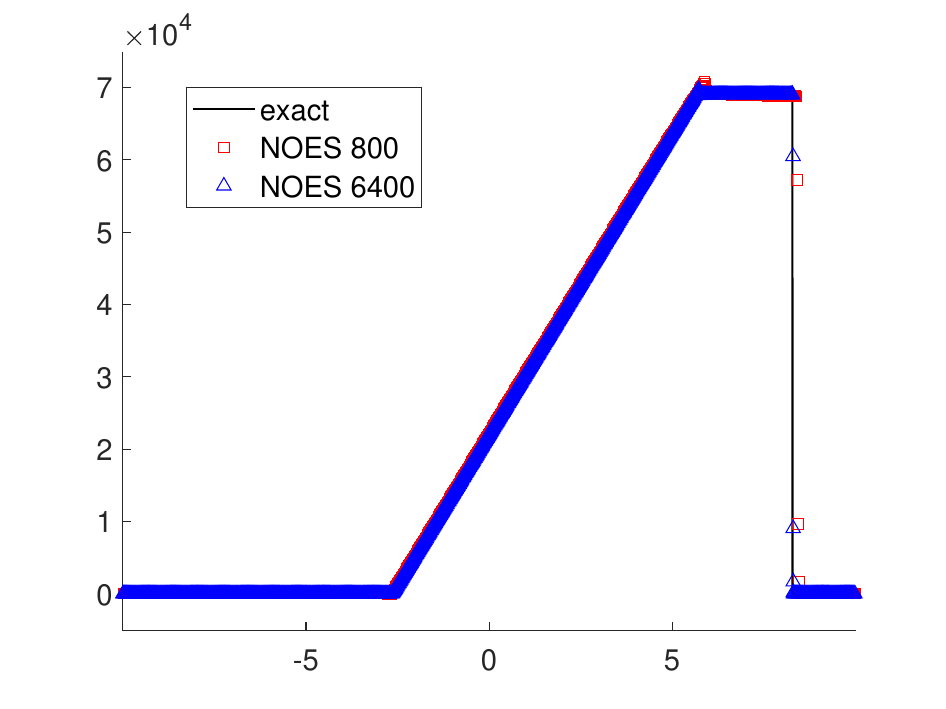}}
  \subfigure[Zoomed in (c).]{
		\includegraphics[width=0.45\linewidth]{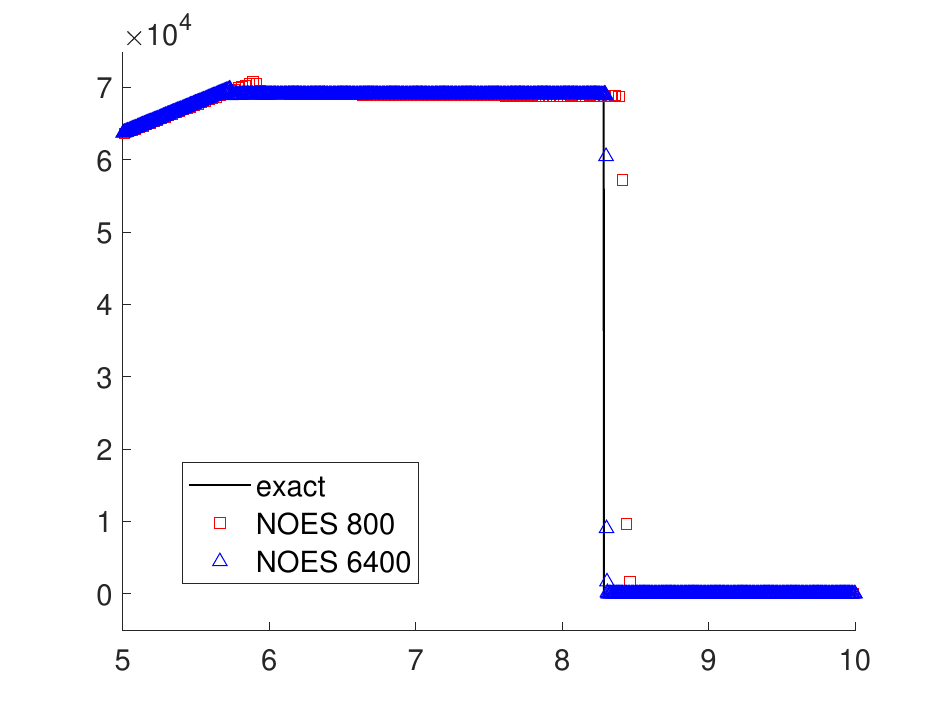}}
  \subfigure[$\lg p$.]{
		\includegraphics[width=0.45\linewidth]{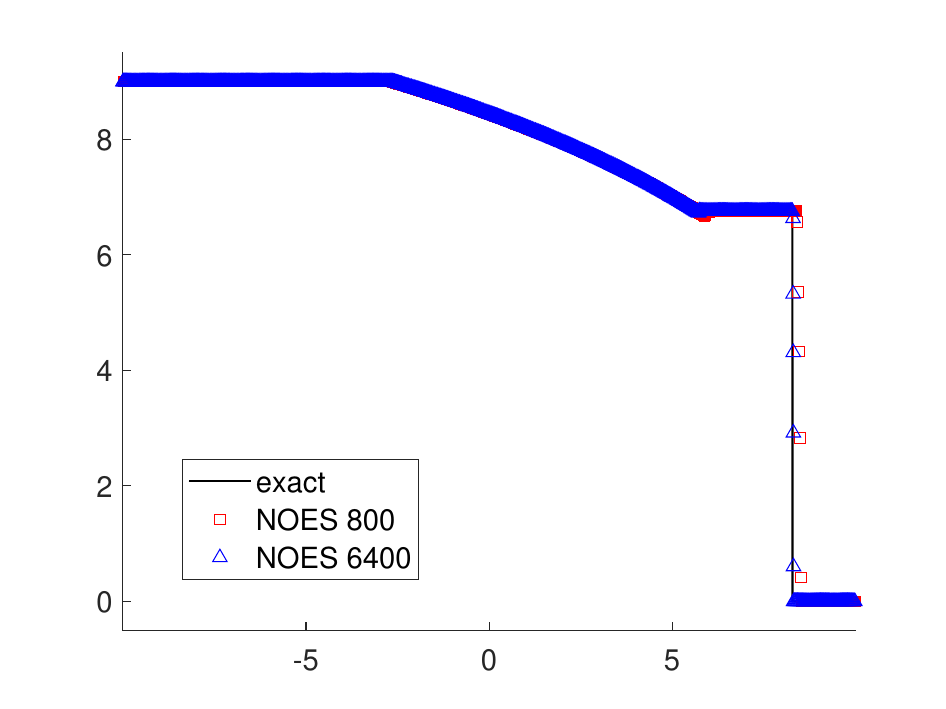}}
  \subfigure[Zoomed in (e)]{
		\includegraphics[width=0.45\linewidth]{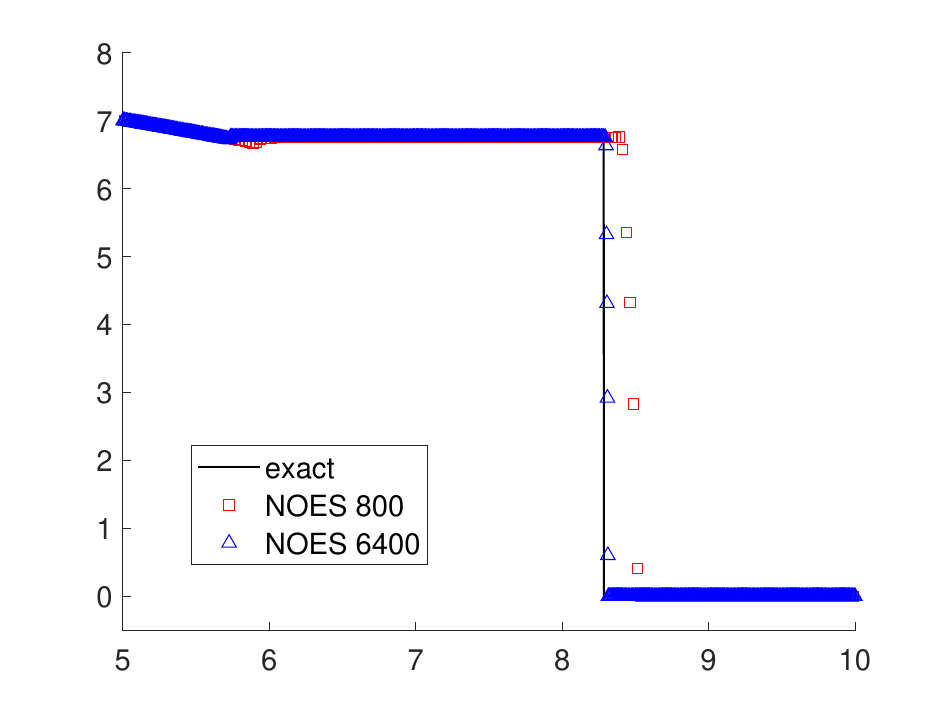}}
	\caption{Example \ref{ex:Leblanc}: Leblanc shock tube. The numerical solution at $T = 0.0001$ with $N= 800$ and $N=6400$.}
    \label{figLeblanc}
\end{figure}

\begin{Ex}
\textbf{(2D accuracy test.)}
\label{ex:vortex}
\end{Ex}

We test the accuracy of the scheme for two-dimensional Euler equation. Initially, an isentropic vortex perturbation centered at $(5,0)$ is added on the mean flow $(\rho_0,u_0,v_0,p_0)=(1,1,0,1)$:
$$
\begin{aligned}
\rho &=\left( 1-\frac{\gamma -1}{16\gamma \pi ^2}\beta ^2e^{2\left( 1-r^2 \right)} \right) ^{1/\left( \gamma -1 \right)},
\\
u&=1-\frac{\beta}{2\pi}ye^{1-r^2},\\ v&=\frac{\beta}{2\pi}\left( x-5 \right) e^{1-r^2},
\\ p&=\rho ^{\gamma},\end{aligned}
$$
where $r^2 = (x - 5)^2 + y^2$ and we take the vortex strength $\beta = 5$. This problem is essential nonlinear.
It is clear that the exact solution is just the passive convection of the vortex with the mean velocity.
In numerical simulation, we take the computational domain is $\Omega = [0,10]\times[-5,5]$ extended periodically in both directions.
In Table \ref{tab5}, we present the errors and orders of accuracy of $\rho$ at $T = 10$. At this time, the exact solution will coincide with the initial data. It can be seen that the optimal convergence rate is obtained.

\begin{table}[htb!]
        \centering
        \caption{Example \ref{ex:Burgers2D}: Two-dimensional isentropic vortex problem.  Errors and orders of density with the final time $T = 10$.}
	\setlength{\tabcolsep}{3.2mm}{
		\begin{tabular}{|c|c|cc|cc|cc|}
			\hline 
   & $N_x \times N_y$ & $L^1$ error & order & $L^2$ error & order & $L^\infty$ error & order  \\ 
    \hline  
    \multirow{5}{*}{$k=1$}
&$     64 \times     64 $  &6.43e-04 & -- &2.15e-03 & -- &2.08e-02 & -- \\  
&$    128 \times    128 $  &9.89e-05 & 2.70 &3.35e-04 &2.68 &3.58e-03 &2.54 \\
&$    256 \times    256 $  &1.45e-05 & 2.77 &4.91e-05 &2.77 &5.83e-04 &2.62 \\
&$    512 \times    512 $  &2.53e-06 & 2.52 &8.29e-06 &2.57 &1.01e-04 &2.53 \\ 
&$   1024 \times   1024 $  &5.35e-07 & 2.24 &1.73e-06 &2.26 &1.94e-05 &2.37 \\
			\hline 
   \multirow{4}{*}{$k=2$}
&$     64 \times     64 $  &1.08e-05 & -- &3.36e-05 & -- &5.36e-04 & -- \\
&$    128 \times    128 $  &1.01e-06 & 3.42 &2.96e-06 &3.50 &5.88e-05 &3.19 \\  
&$    256 \times    256 $  &1.26e-07 & 3.00 &3.75e-07 &2.98 &7.08e-06 &3.06 \\ 
&$    512 \times    512 $  &1.65e-08 & 2.93 &5.14e-08 &2.87 &8.77e-07 &3.01 \\
			\hline
   \multirow{4}{*}{$k=3$}
&$     64 \times     64 $  &6.17e-07 & -- &1.83e-06 & -- &3.45e-05 & -- \\ 
&$    128 \times    128 $  &2.61e-08 & 4.57 &8.63e-08 &4.41 &1.72e-06 &4.33 \\ 
&$    256 \times    256 $  &1.38e-09 & 4.24 &4.89e-09 &4.14 &1.10e-07 &3.97 \\  
&$    512 \times    512 $  &8.90e-11 & 3.96 &3.07e-10 &3.99 &6.70e-09 &4.04 \\
			\hline 
	\end{tabular}} 
    \label{tab5}
\end{table}

\begin{Ex}
\textbf{(2D Riemann problem.)}
\label{ex:Riemann2D}
\end{Ex}

Two-dimensional Riemann problems with different initial configurations have been
extensively employed to examine the numerical schemes for Euler equations. Here, We consider the case in \cite{kurganov2002solution}. The computational domain is $\Omega = [0,1]\times[0,1]$ with outflow boundary conditions. The initial data is given by
$$(\rho,u,v,p)=\left\{ \begin{array}{ll}   
(1.5,0,0,1.5), & (x,y)\in(0.8,1]\times(0.8,1],\\ 
(0.5323,1.206,0,0.3), & (x,y)\in[0,0.8)\times(0.8,1],\\ 
(0.138,1.206,1.206,0.029), & (x,y)\in[0,0.8)\times[0,0.8),\\ 
(0.5323,0,1.206,0.3), & (x,y)\in(0.8,1]\times[0,0.8).\\   
\end{array} \right.$$ 
In Figure \ref{figRiemann}, we present the result of density at $T = 0.8$ with $N_x=N_y=400$. 
It can be seen that the reflection shocks and contact discontinuities are captured well.

\begin{figure}[htbp]
    \centering
	\includegraphics[width=0.65\linewidth]{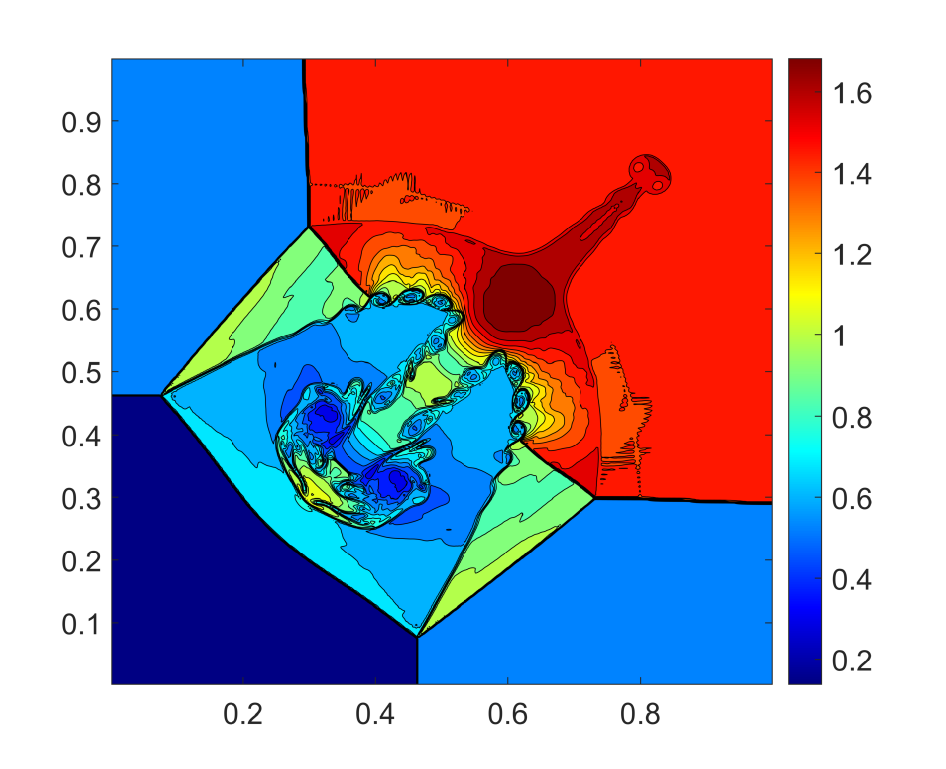}
    \caption{Example \ref{ex:Riemann2D}: Two-dimensional Riemann problem. The numerical solution of density at $T = 0.8$ with $N_x =  N_y = 400$. 30 contour lines are used. }
    \label{figRiemann}
\end{figure}

\begin{Ex}
\textbf{(Double Mach reflection.)}
\label{ex:DoubleMach}
\end{Ex}

Next, we consider the double-Mach reflection problem \cite{woodward1984numerical}, which is a benchmark test problem for the two-dimensional  Euler equation. 
The computational domain is $\Omega=[0,4]\times[0,1]$. 
The reflection wall lies at the bottom of the computational domain starting from $x=1/6$.
This problem describes a right-moving Mach 10 shock initially positioned at $x=1/6, y=0,$ makes a $60^\circ$ angle with the horizontal wall,  i.e., 
$$
\left( \rho ,u,v,p \right) =\begin{cases} 
	\left( 8,8.25\cos \left( \frac{\pi}{6} \right) ,-8.25\sin \left( \frac{\pi}{6} \right) ,116.5 \right) , & x<\frac{1}{6}+\frac{y}{\sqrt{3}}, \\
	\left( 1.4,0,0,1 \right) , & x\ge \frac{1}{6}+\frac{y}{\sqrt{3}}. \\
\end{cases}
$$
The inflow and outflow conditions are imposed on the left and the right boundary respectively. 
For the bottom boundary, the exact post-shock boundary condition is imposed for the part $0<x<1/6$, and a reflective boundary is used for the rest. 
For the top boundary, the post-shock condition is imposed for the part from $0<  x < 1/6 + (1 + 20t)/\sqrt 3$, and the pre-shock condition is used for the rest. 
In Figure \ref{figDoubleMach}, we present the result at $T = 0.2$ with $N_x\times N_y=960\times 240$ and $N_x\times N_y = 1920\times 480$. And a zoomed-in graph in provided in Figure \ref{figDoubleMach480}. 
We can see that our results agree well with the results in \cite{woodward1984numerical}.

\begin{figure}[htbp!]
	\centering
    \subfigure[$960\times 240$]{
		\includegraphics[width=0.8\linewidth]{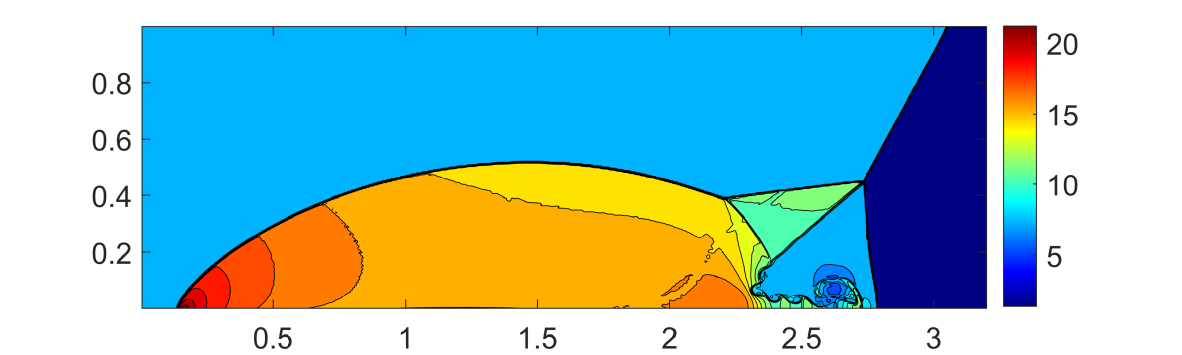}}
	\subfigure[$1920\times 480$]{
		\includegraphics[width=0.8\linewidth]{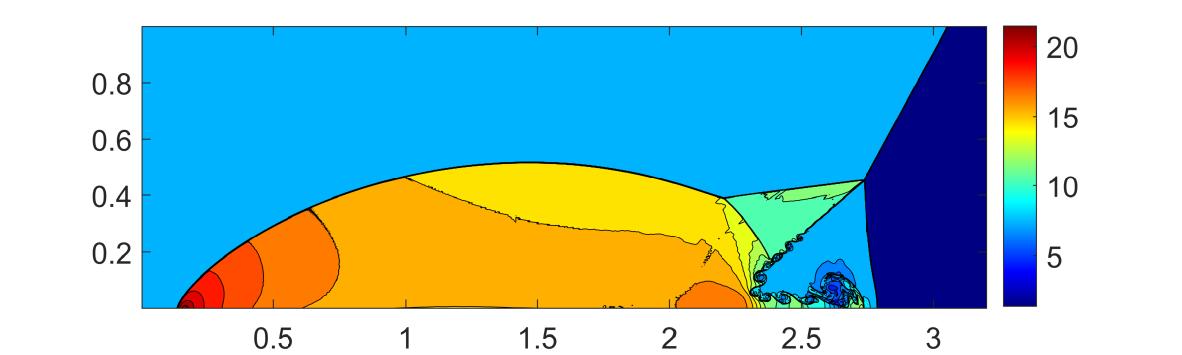}}
	\caption{Example \ref{ex:DoubleMach}: Double Mach reflection. The numerical solution of density at $T = 0.2$. 30 contour lines are used. }
    \label{figDoubleMach}
\end{figure}

\begin{figure}[htbp!]
	\centering
    \subfigure[$960\times 240$]{
		\includegraphics[width=0.45\linewidth]{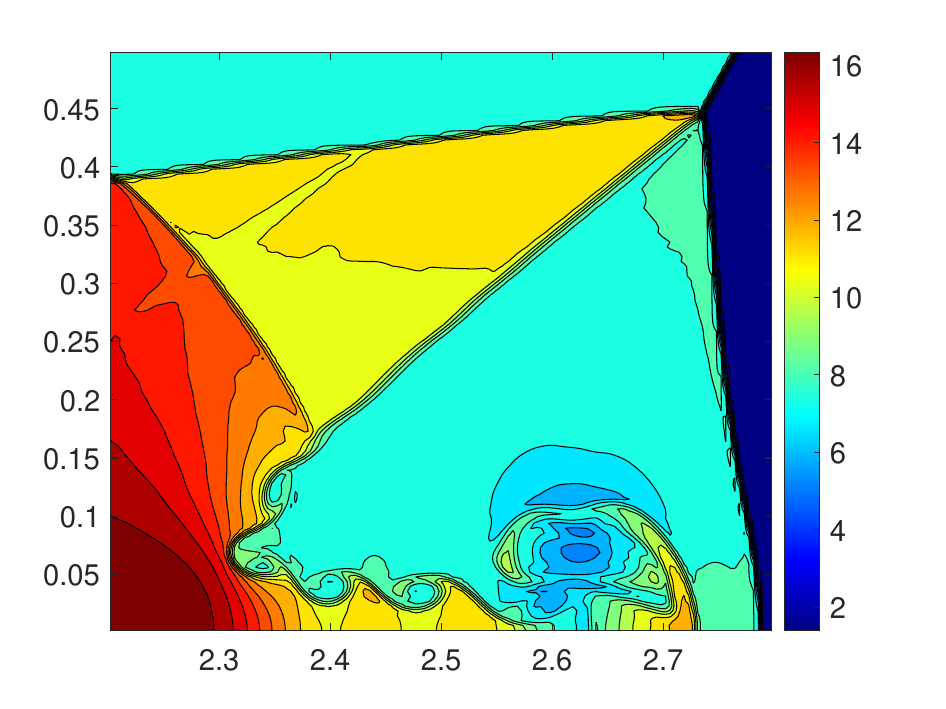}}
	\subfigure[$1920\times 480$]{
		\includegraphics[width=0.45\linewidth]{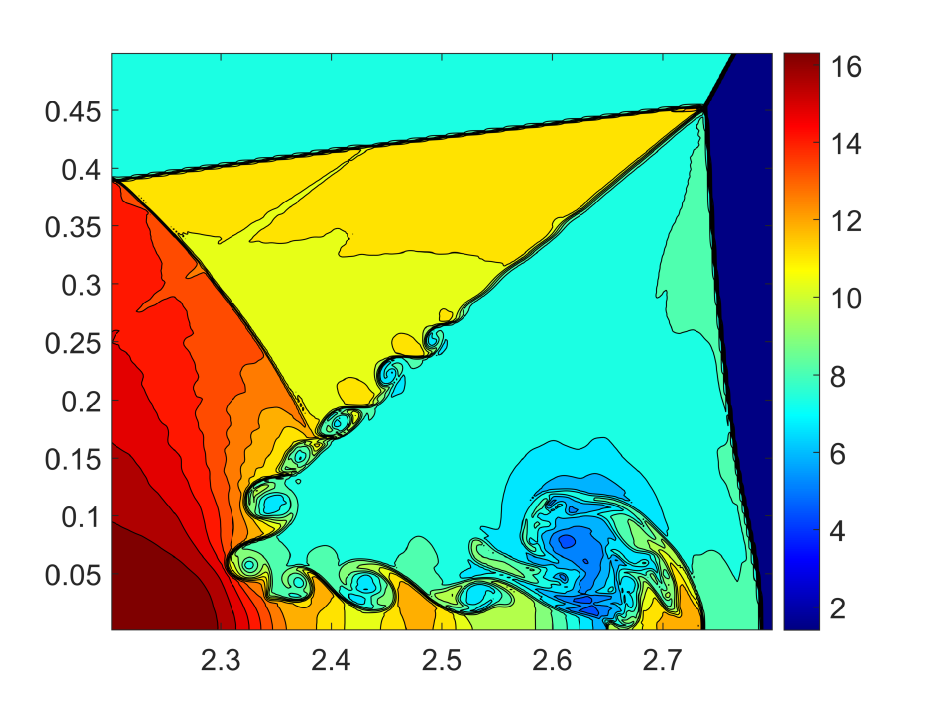}}
	\caption{Example \ref{ex:DoubleMach}: Double Mach reflection. Zoomed-in figure. The numerical solution of density near the Mach stem at $T = 0.2$. 30 contour lines are used. }
    \label{figDoubleMach480}
\end{figure}

\begin{Ex}
\textbf{(Mach 2000 jet.)}
\label{ex:jet}
\end{Ex}

Finally, we consider the high Mach number astrophysical jet problem \cite{zhang2010positivity}.  We note that $\gamma = 5/3$ in this example. 
In this example, the code could easily produce the negative pressure
and density, resulting in blow up easily during numerical computation.

The computational domain is $\Omega = [0,1]\times[-0.25,0.25]$, initially full of the ambient gas with
$$(\rho,u,v,p) = (0.5,0,0,0.4127).$$
A Mach 2000 jet state $(\rho,u,v,p) = (5,800,0,0.4127)$ is injected from the left boundary in the range $-0.05<y<0.05$. Outflow conditions are applied on all remaining boundaries. This example requires the PP limiter. 

In Figure \ref{figjet}, we present the density, pressure and temperature $p/\rho$ at time $T = 0.001$ with $320\times 160$ meshes. 
It is observed that our scheme can capture the feature without any occurrence of instability, indicating the efficiency of our proposed schemes.

\begin{figure}[htbp!]
	\centering
    \subfigure[Density.]{
		\includegraphics[width=0.45\linewidth]{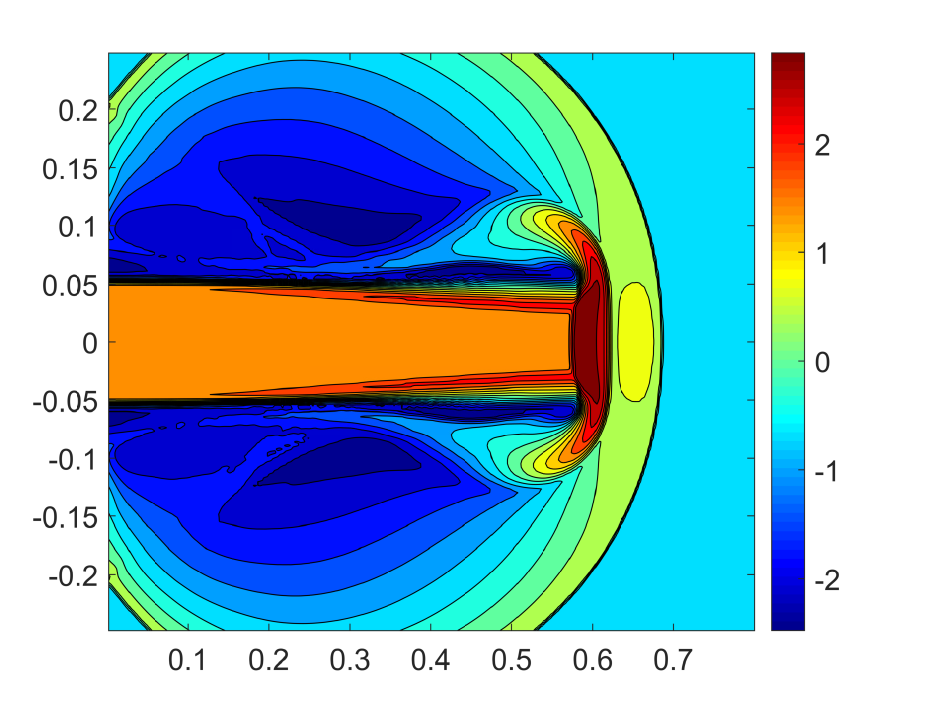}}
    \subfigure[Pressure.]{
		\includegraphics[width=0.45\linewidth]{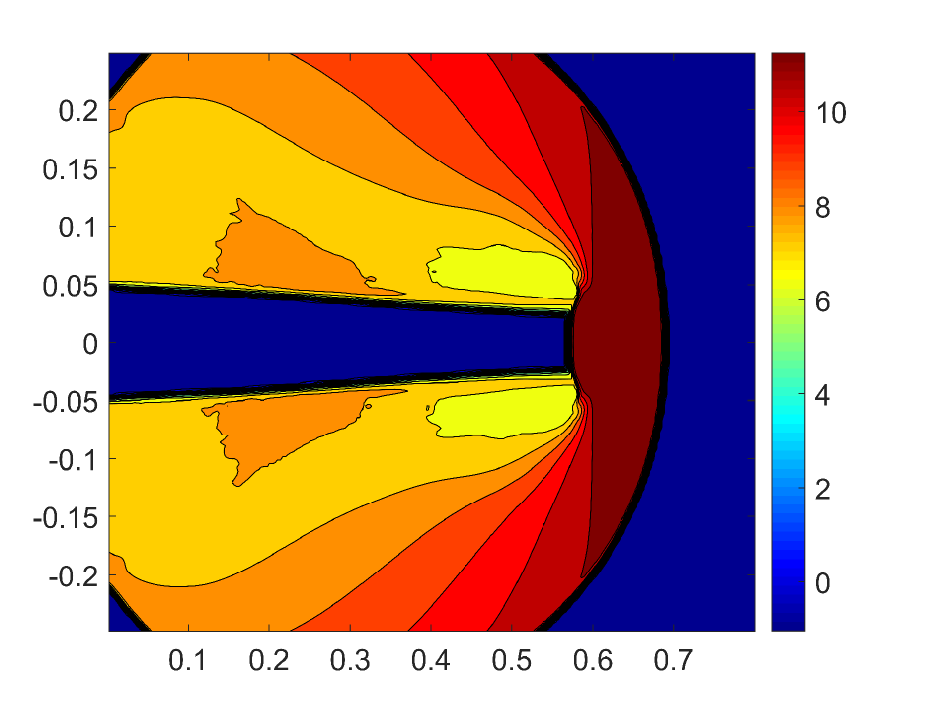}}
	\subfigure[Temperature.]{
		\includegraphics[width=0.45\linewidth]{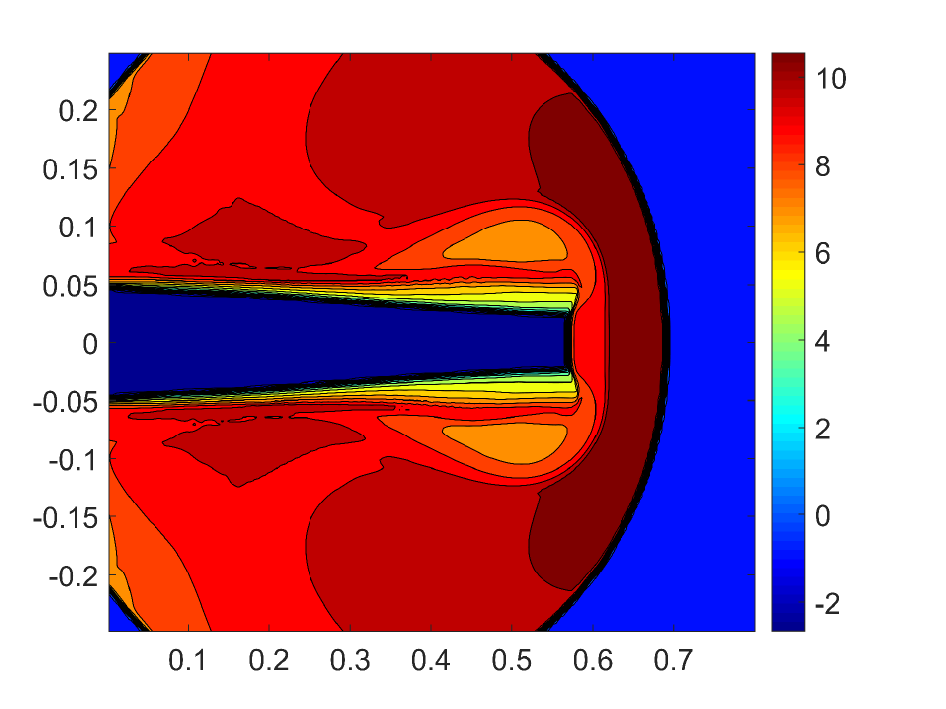}}
	\caption{Example \ref{ex:jet}: High Mach number jet problem. The numerical solution  at $T = 0.001$ with $N_x \times  N_y = 320\times 160$. Scales are logarithmic. 12 contour lines are used.}
    \label{figjet}
\end{figure}

\section{Concluding remarks}

In this paper, we present a class of non-oscillatory entropy-stable DG methods for solving hyperbolic conservation laws. By incorporating a specific form of local artificial viscosity term into the original DG scheme on each cell and imposing an interpolation condition, the semi-discrete scheme ensures strict non-increase of the total entropy with a careful design of the viscosity coefficient. 
A damping coefficient is taken into account to control spurious oscillation near shock waves. 
Moreover, this damping coefficient is also employed to constrain the viscosity parameter in cases where the entropy coefficient’s denominator approaches zero.
We prove the optimal accuracy for scalar advection equations with quadratic entropy, and advection equations with other entropy functions under certain assumptions. 
It is noted that the added viscosity term may induce stiffness and impose strict constraints on time step sizes. To address this, we employ the integrating factor Runge-Kutta method, enabling explicit computation under the standard CFL condition. Due to the local structure of our proposed scheme, the algorithm can be implemented easily. 
For extreme problems where negative pressures might occur, we demonstrate that our scheme is compatible with the positivity-preserving limiter. 
Extensive numerical examples confirm that our scheme ensures entropy stability, suppresses oscillations, and attains optimal accuracy. 
Future work involves extending the proposed method to unstructured meshes and exploring its application to other equations.

\section*{Acknowledgements}

The authors would like to express their gratitude to Professor Yang Yang for his valuable discussions.

\end{document}